\newtheorem{thm}{Theorem}[section]
\newtheorem{cor}[thm]{Corollary}
\newtheorem{lem}[thm]{Lemma}
\newtheorem{prop}[thm]{Proposition}
\theoremstyle{definition}
\theoremstyle{remark}
\newtheorem{rem}[thm]{Remark}
\numberwithin{equation}{section}
\begin{document}

\title{The continuity method on minimal elliptic K\"ahler surfaces}

\author{Yashan Zhang}
\address{Department of mathematics, University of Macau, Macau, China}
\email{yashanzh@163.com}
\thanks{Y. Zhang is partially supported by the Science and Technology Development Fund (Macao S.A.R.) Grant FDCT/ 016/2013/A1 and the Project MYRG2015-00235-FST of the University of Macau}

\author{Zhenlei Zhang}
\address{Department of mathematics, Capital Normal University, Beijing, China}
\email{zhleigo@aliyun.com}
\thanks{Z. Zhang is partially supported by NSFC 11431009}

\begin{abstract}
We prove that, on a minimal elliptic K\"ahler surface of Kodaira dimension one, the continuity method introduced by La Nave and Tian in \cite{LT} starting from any initial K\"{a}hler metric converges in Gromov-Hausdorff topology to the metric completion of the generalized K\"{a}hler-Einstein metric on its canonical model constructed by Song and Tian in \cite{ST06}.
\end{abstract}

\maketitle

\section{Introduction}
In \cite{LT}, La Nave and Tian introduced a new approach to the Analytic Minimal Model Program. It is a continuity method of complex Monge-Amp\`ere equations. In this note, we will study the geometric convergence of this continuity equation on minimal elliptic surfaces.

\par Let $X$ be a minimal elliptic K\"ahler surface of Kodaira dimension $kod(X)=1$. By definition (see, e.g., \cite[Section I.3]{M} or \cite[Section 2.2]{ST06}), there exists a holomorphic map $f:X\rightarrow\Sigma$, determined by the pluricanonical system $|mK_{X}|$ for sufficiently large integer $m$, from $X$ onto a smooth projective curve $\Sigma$ (i.e., the canonical model of $X$), such that the general fiber is a smooth elliptic curve and all fibers are free of $(-1)$-curves. Set $\Sigma_{reg}:=\{s\in \Sigma|X_{s}:=f^{-1}(s)$ is a nonsingular fiber$\}$ and $X_{reg}=f^{-1}(\Sigma_{reg})$. Assume  $\Sigma\setminus\Sigma_{reg}=\{s_{1},\ldots,s_{k}\}$ and let $m_{i}F_{i}=X_{s_{i}}$ be the corresponding singular fiber of multiplicity $m_{i}$, $i=1,\ldots,k$. We refer readers to \cite[Section I.5]{M} for several interesting examples of minimal elliptic surfaces.
\par In \cite{ST06}, Song and Tian proved that there exists a unique generalized K\"{a}hler-Einstein current $\chi_{\infty}$ on $\Sigma$, i.e., $\chi_{\infty}$ is a closed positive $(1,1)$-current on $\Sigma$ such that $\chi_{\infty}$ is smooth on $\Sigma_{reg}$ and $Ric(\chi_{\infty})=-\sqrt{-1}\partial\bar{\partial}\log\chi_{\infty}$ is a well-defined $(1,1)$-current on $\Sigma$ satisfying
\begin{equation}\label{KaEi}
Ric(\chi_{\infty})=-\chi_{\infty}+\omega_{WP}+2\pi\sum_{i=1}^{k}\frac{m_{i}-1}{m_{i}}[s_{i}],
\end{equation}
where $\omega_{WP}$ is the induced Weil-Petersson form and $[s_{i}]$ is the current of integration associated to the divisor $s_{i}$ on $\Sigma$.
\par In this paper we consider the following continuity method introduced by La Nave and Tian in \cite{LT} and Rubinstein in \cite{Ru} starting from any initial K\"ahler metric $\omega_{0}$ on $X$,
\begin{equation}\label{lt}
\left\{
\begin{aligned}
(1+t)\omega(t)&=\omega_{0}-tRic(\omega(t))\\
\omega(0)&=\omega_{0}.
\end{aligned}
\right.
\end{equation}
According to \cite{LT}, a solution $\omega=\omega(t)$ to (\ref{lt}) exists uniquely for all $t\geq0$ and the Ricci curvature of $\omega(t)$ satisfies
\begin{equation}\label{Ric bound}
Ric(\omega(t))\geq-2\omega(t)
\end{equation}
for all $t\geq1$.
\par Our main result is the following
\begin{thm}\label{main}
Assume as above, we have
\begin{itemize}
\item[(1)] As $t\to\infty$, $\omega(t)\to f^{*}\chi_{\infty}$ as currents on $X$ and, for any given compact subset $V$ of $X_{reg}$, there exists a constant $\alpha_{V}\in(0,1)$ such that $\omega(t)\to f^{*}\chi_{\infty}$ in $C^{1,\alpha_{V}}(V,\omega_0)$-topology;
\item[(2)] For any $s\in\Sigma_{reg}$, $(1+t)\omega(t)|_{X_s}$ converges in $C^\infty(X_s,\omega_0|_{X_s})$-topology to the unique flat metric in class $[\omega_0|_{X_s}]$ as $t\to\infty$.
\end{itemize}
Moreover, if we let $(X_{\infty},d_{\infty})$ be the metric completion of $(\Sigma_{reg},\chi_{\infty})$, then
\begin{itemize}
\item[(3)] $(X_{\infty},d_{\infty})$ is a compact length metric space and $X_{\infty}$ is homeomorphic to $\Sigma$ as a projective variety;
\item[(4)] As $t\to\infty$, $(X,\omega(t))\to(X_{\infty},d_{\infty})$ in Gromov-Hausdorff topology.
\end{itemize}
\end{thm}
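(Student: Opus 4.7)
The plan is to recast (\ref{lt}) as a degenerating family of complex Monge-Amp\`ere equations and import the collapsing/estimate machinery developed for the K\"ahler-Ricci flow on minimal elliptic surfaces. Taking cohomology in (\ref{lt}) gives $[\omega(t)]=\frac{1}{1+t}[\omega_0]+\frac{t}{1+t}\cdot 2\pi c_1(K_X)$. By the Kodaira canonical bundle formula together with $kod(X)=1$, the class $2\pi c_1(K_X)$ is pulled back from a semiample big class on $\Sigma$; I would pick a smooth closed semipositive form $\chi$ on $\Sigma$ with $[f^*\chi]=2\pi c_1(K_X)$, set $\omega_t:=\frac{1}{1+t}\omega_0+\frac{t}{1+t}f^*\chi$, and write $\omega(t)=\omega_t+\sqrt{-1}\partial\bar\partial\varphi_t$. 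For a reference volume form $\Omega$ on $X$ with $-\sqrt{-1}\partial\bar\partial\log\Omega\in 2\pi c_1(K_X)$, (\ref{lt}) reduces to a scalar Monge-Amp\`ere equation
\begin{equation*}
(\omega_t+\sqrt{-1}\partial\bar\partial\varphi_t)^2=c_t\,e^{\frac{1+t}{t}\varphi_t+F_t}\Omega,
\end{equation*}
where $F_t$ is explicit and uniformly bounded on compacts of $X_{reg}$; its formal $t\to\infty$ limit is the generalized K\"ahler-Einstein equation of \cite{ST06} pulled back to $X$.

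I would then run the standard chain of a priori estimates in the collapsed setting. A Ko{\l}odziej / Tsuji-type argument adapted to the degenerating reference class gives a uniform $L^\infty$ bound on $\varphi_t$. The Ricci lower bound (\ref{Ric bound}) together with the cohomological identity for $\omega(t)^2$ controls the total volume form. On a compact $V\Subset X_{reg}$, a Schwarz / Chern-Lu inequality comparing $\omega(t)$ with $f^*\chi_\infty$ produces $C^{-1}f^*\chi_\infty\leq\omega(t)\leq Cf^*\chi_\infty$ in horizontal directions, and Yau's $C^2$ estimate plus Evans-Krylov upgrade this to $C^{1,\alpha_V}$ bounds, yielding the uniform-on-compacts convergence of (1); the current convergence on all of $X$ follows from the $L^\infty$ bound on the potentials. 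For (2), the class $[(1+t)\omega(t)|_{X_s}]=[\omega_0|_{X_s}]$ is $t$-independent because $K_X|_{X_s}$ is trivial on a smooth elliptic fiber, and the horizontal $C^{1,\alpha}$ control from (1) reduces the restricted Monge-Amp\`ere equation on $X_s$ to a uniformly elliptic equation. A standard bootstrap, as in \cite{ST06} and the subsequent collapsing literature, then yields $C^\infty$ convergence of $(1+t)\omega(t)|_{X_s}$ to the unique flat representative of $[\omega_0|_{X_s}]$.

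Part (3) is a direct local analysis of $\chi_\infty$ near the singular values. Equation (\ref{KaEi}) forces a conical model with angle $2\pi/m_i$ at each $s_i$, modified by the smooth Weil-Petersson contribution, so $(\Sigma_{reg},\chi_\infty)$ has finite diameter and volume and its metric completion adjoins exactly one point per singular value, giving a compact length space homeomorphic to $\Sigma$.

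The main obstacle is (4), the Gromov-Hausdorff convergence of the total space. I would construct an $\epsilon_t$-approximation $F_t:(X,\omega(t))\to(X_\infty,d_\infty)$ by composing $f$ with the homeomorphism of (3). Over $f^{-1}(V)$ for $V\Subset\Sigma_{reg}$ the approximation property follows directly from (1) and the fiber collapse in (2). The genuinely delicate step is to produce a uniform bound, independent of $t$, on the $\omega(t)$-diameter of $f^{-1}(U)$ where $U$ is a small neighborhood of a singular value $s_i$, vanishing as $U$ shrinks; without it, the singular fibers could a priori contribute stray limit points. I would establish this by combining the Ricci lower bound (\ref{Ric bound}) with Bishop-Gromov volume comparison, a Cheeger-Colding segment inequality, and an integration of the semi-flat approximation along curves transverse to the fibration, adapting the techniques developed for the K\"ahler-Ricci flow on minimal elliptic surfaces. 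Coupling this diameter estimate with the $C^{1,\alpha}$ approximation over compacts of $X_{reg}$ then forces $F_t$ to be an $\epsilon_t$-isometry with $\epsilon_t\to 0$, which gives the Gromov-Hausdorff convergence and completes the proof.
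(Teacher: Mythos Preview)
Your overall architecture matches the paper's: reduce (\ref{lt}) to a scalar Monge--Amp\`ere equation with collapsing reference form, run the Song--Tian estimate package on $X_{reg}$, identify the limit with the generalized K\"ahler--Einstein metric, and then handle the Gromov--Hausdorff step via the Ricci lower bound (\ref{Ric bound}). A few points, however, need correction or sharpening.

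\textbf{Part (3): the Weil--Petersson term is not smooth.} You write that (\ref{KaEi}) ``forces a conical model with angle $2\pi/m_i$ at each $s_i$, modified by the smooth Weil--Petersson contribution.'' This is incorrect: $\omega_{WP}$ is \emph{singular} at the discriminant points, and for singular fibers of multiplicity $m_i=1$ (e.g.\ Kodaira types $I_n$, $II$, $\ldots$) the entire singularity of $\chi_\infty$ comes from $\omega_{WP}$, not from the divisorial term. So your cone-angle heuristic gives no information there. The paper instead uses the concrete estimate $F(s)\le C|s|^{-2\beta}$ with some $\beta<1$ (from \cite{ST06,He}; in fact $\beta=\max\{5/6,\,1-1/2m_i\}$), where $\chi_\infty=Fe^{\varphi_\infty}\chi$. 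Integrating $\sqrt{F}$ along radial and circular arcs then gives $\mathrm{diam}(\Delta_r^*,\chi_\infty)\le Cr^{1-\beta}$, from which compactness of the completion and the homeomorphism with $\Sigma$ follow immediately.

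\textbf{Part (1): the $C^{1,\alpha}$ estimate needs a rescaling.} ``Yau's $C^2$ estimate plus Evans--Krylov'' is not enough as stated, because the reference form $\omega_t$ collapses in the fiber direction and you lose uniform ellipticity. The paper pulls back to the universal cover $B\times\mathbb{C}$ of a tubular neighborhood, applies the dilation $\lambda_t(s,w)=(s,\sqrt{1+t}\,w)$, and exploits the homogeneity of the semi-flat potential so that $\lambda_t^*p^*\omega(t)$ becomes uniformly equivalent to the Euclidean metric. Evans--Krylov and one round of Schauder then give $C^{1,\alpha}$ for $\lambda_t^*p^*\omega$, which descends. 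Note also that the paper explicitly observes bootstrapping \emph{stops} at $C^{1,\alpha}$ here; your phrase ``standard bootstrap'' overstates what is available.

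\textbf{Part (4): the diameter control near singular fibers is simpler than you propose.} You invoke a Cheeger--Colding segment inequality and integration of the semi-flat approximation across the fibration. The paper's argument is shorter and uses only Bishop--Gromov: from the two-sided volume-form bound $C^{-1}(1+t)^{-1}\Omega\le\omega(t)^2\le C(1+t)^{-1}\Omega$ one gets $\mathrm{Vol}_{\omega(t)}(X\setminus K_\delta)\le (1+t)^{-1}\epsilon$ for $\delta$ small (the singular fibers have $\Omega$-measure zero). If $x_t$ maximizes $d_{\omega(t)}(\cdot,K_\delta)=:R_t$, comparing $\mathrm{Vol}(B(x_t,R_t))$ with $\mathrm{Vol}(B(x_t,R_t+D_\infty+1))\ge\mathrm{Vol}(K_\delta)$ via Bishop--Gromov (using (\ref{Ric bound})) forces $R_t$ to be bounded, and then small, as $\epsilon\to0$. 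This, together with the curve-lifting argument you already sketch over $K_\delta$, yields the three-term Gromov--Hausdorff sandwich $(X,\omega(t))\approx(K_\delta,\omega(t))\approx(K_\delta',d_\infty)\approx(\Sigma,d_\infty)$.
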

We would like to explain how our result fits the existing literatures. According to the Analytic Minimal Model Program proposed in \cite{ST06,ST12,ST16,LT}, the K\"ahler-Ricci flow and the continuity method should deform any K\"ahler metric on a smooth minimal model (i.e., a K\"ahler manifold with nef canonical line bundle), say $M$, to a canonical metric or its metric completion on its canonical model $M_{can}$ in Gromov-Hausdorff topology. If in addition we assume that the canonical line bundle $K_M$ of $M$ is semi-ample, then by semi-ample fibration theorem (see \cite{La,Ue}) there exists a fiber space map determined by the pluricanonical system of $K_{M}$:
\begin{equation}\label{fibration}
\pi:M\to M_{can}.
\end{equation}
\par If $dim(M_{can})=0$, then $M$ is a Calabi-Yau manifold. By a classical result of Cao \cite{C}, K\"ahler-Ricci flow will deform any K\"ahler metric to the unique Ricci-flat K\"ahler metric in the same K\"ahler class smoothly. It can be easily checked that the same result holds for the continuity method.
\par If $dim(M_{can})=dim(M)$, i.e, $M$ is a smooth minimal model of general type, the expected geometric convergence is obtained for K\"ahler-Ricci flow when $dim(M)\le3$ (see \cite{GSW,TZe}). For the continuity method, the convergence is obtained for any dimension in \cite{LTZ}.
\par The remaining case is $0<dim(M_{can})<dim(M)$. In this case, the geometric convergence of K\"ahler-Ricci flow is obtained in \cite{TWY} (see \cite{FZ,Gi} for certain special cases) when $M_{can}$ is smooth and the semi-ample fibration \eqref{fibration} does not admit any singular fiber. In general, this problem is largely open. Our Theorem \ref{main} confirms this expected picture for the continuity method when $dim(M)=2$. In fact, our argument can also apply to K\"ahler-Ricci flow on minimal elliptic K\"ahler surface of Kodaira dimension one if all its singular fibers are of type $mI_0$ (see Remark \ref{r3.1}).\\

The rest of this paper is organized as follows. We will prove parts (1) and (2) of Theorem \ref{main} in Section 2 and prove parts (3) and (4) in Section \ref{geomconv}. The key observation in the proof is that the limit space $(X_\infty,d_\infty)$ is compact.

\section{Estimates and local convergence}\label{est}
In this section we will derive necessary estimates and prove parts (1) and (2) of Theorem \ref{main}.
\par As the first step, we will reduce (\ref{lt}) to a scalar equation of Monge-Amp\`{e}re type as in \cite{LT,LTZ}. Let $\chi$ be the restriction on $\Sigma$ of a multiple of the Fubini-Study metric of a projective space and $\Omega$ a smooth positive volume form on $X$ with $\sqrt{-1}\partial\bar{\partial}\log\Omega=f^{*}\chi$. Set $\omega_{t}:=\frac{1}{1+t}\omega_{0}+\frac{t}{1+t}f^{*}\chi$. Then (\ref{lt}) can be reduced to the following equation of $\varphi=\varphi(t)$

\begin{equation}\label{lt1}
\left\{
\begin{aligned}
(\omega_{t}+\sqrt{-1}\partial\bar{\partial}\varphi)^{2}&=(1+t)^{-1}e^{\frac{1+t}{t}\varphi}\Omega\\
\varphi(0)&=0.
\end{aligned}
\right.
\end{equation}
Namely, $\omega(t)=\omega_{t}+\sqrt{-1}\partial\bar{\partial}\varphi(t)$ solves (\ref{lt}) if $\varphi(t)$ solves (\ref{lt1}). We remark that the factor $(1+t)^{-1}$ in the right hand side of \eqref{lt1} comes from the cohomology class and formal scale of volume, which in particular is crucial in obtaining a uniform bound of K\"ahler potential $\varphi(t)$, see Lemma \ref{C0'}.

Next, following \cite{To10}, we fix a smooth nonnegative function $\sigma$ on $X$, which vanishes exactly on singular fibers and satisfies
\begin{equation}\label{00}
\sigma\leq1, \sqrt{-1}\partial\sigma\wedge\bar{\partial}\sigma\leq C f^{*}\chi, -C f^{*}\chi\leq\sqrt{-1}\partial\bar{\partial}\sigma\leq C f^{*}\chi
\end{equation}
on $X$ for some constant $C$.

\begin{lem}\label{C0'}
There exists a constant $C>0$ such that for any $t\geq1$,
\begin{equation}\label{C0}
\|\varphi(t)\|_{C^{0}(X)}\leq C,
\end{equation}
or equivalently,
\begin{equation}\label{vol}
\frac{1}{C(1+t)}\Omega\leq\omega(t)^{2}\leq\frac{C}{1+t}\Omega.
\end{equation}
\end{lem}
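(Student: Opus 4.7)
\emph{Plan.} The plan is to bound $\varphi(t)$ by maximum-principle arguments: a direct one for the upper bound, and an argument on a perturbation $\varphi-\varepsilon\log\sigma$ for the lower bound. The equivalence between \eqref{C0} and \eqref{vol} is immediate from \eqref{lt1}, since $\frac{1+t}{t}\in[1,2]$ for $t\ge 1$.

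\emph{Upper bound.} At $x_0:=\arg\max\varphi(t)$ one has $\sqrt{-1}\partial\bar\partial\varphi(x_0)\le 0$, so \eqref{lt1} gives $(1+t)^{-1}e^{\frac{1+t}{t}\varphi(x_0)}\Omega\le\omega_t^2(x_0)$. Because $\dim_{\mathbb C}\Sigma=1$ we have $(f^*\chi)^2=0$, so the right-hand side expands as $\frac{1}{(1+t)^2}\omega_0^2+\frac{2t}{(1+t)^2}\omega_0\wedge f^*\chi$, which is pointwise bounded by $\frac{C}{1+t}\Omega$ for $t\ge 1$. This yields $\sup_X\varphi\le C$.

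\emph{Lower bound.} At $\arg\min\varphi$ the naive version of the same argument only produces $\inf_X\varphi\ge -C\log(1+t)$, because along a non-reduced singular fibre $m_iF_i$ the form $\omega_0\wedge f^*\chi$ vanishes to order $2(m_i-1)$, leaving only the weak estimate $\omega_t^2\ge C^{-1}(1+t)^{-2}\Omega$. To kill this logarithmic loss I would apply the maximum principle instead to the perturbation $\Phi_\varepsilon:=\varphi-\varepsilon\log\sigma$ for $0<\varepsilon\ll 1$. Since $\log\sigma\to -\infty$ on the singular fibres, $\Phi_\varepsilon\to +\infty$ there, so its infimum is attained at some interior point $y_\varepsilon\in X_{reg}$, where $\sqrt{-1}\partial\bar\partial\varphi\ge\varepsilon\sqrt{-1}\partial\bar\partial\log\sigma$. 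Combining this with the bounds \eqref{00} and inserting into \eqref{lt1}, I expect to extract a $t$-uniform lower bound $\omega(t)^2(y_\varepsilon)\ge c(\varepsilon)(1+t)^{-1}\Omega$, hence $\varphi(y_\varepsilon)\ge -C(\varepsilon)$. Since $\Phi_\varepsilon(x)\ge\Phi_\varepsilon(y_\varepsilon)$ on $X$, this gives $\varphi(x)\ge -C(\varepsilon)+\varepsilon\log\sigma(x)$, and letting $\varepsilon\to 0$ (noting $\sigma\le 1$) yields $\inf_X\varphi\ge -C$.

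\emph{Main obstacle.} The technical heart of the lower bound is the pointwise control of $\omega(t)^2(y_\varepsilon)$ in the presence of the very singular term $-\sigma^{-2}\sqrt{-1}\partial\sigma\wedge\bar\partial\sigma$ appearing in $\sqrt{-1}\partial\bar\partial\log\sigma$. The bounds \eqref{00} are arranged precisely so that every singular contribution is dominated by a pull-back from $\Sigma$; on expanding $(\omega_t+\sqrt{-1}\partial\bar\partial\varphi)^2$ the identity $(f^*\chi)^2=0$ annihilates the leading singularities, and only the cross terms with $\omega_0$ survive. Keeping track of the crucial factor $(1+t)^{-1}$ on the right of \eqref{lt1} — which, as the authors emphasise, is indispensable in this collapsing regime — is the delicate bookkeeping that delivers the desired estimate.
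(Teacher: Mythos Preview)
Your upper bound is correct and matches the paper's argument exactly. The equivalence between \eqref{C0} and \eqref{vol} is also fine.

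The lower bound, however, has a genuine gap, and the paper takes a different route. The paper obtains the lower bound by Moser iteration in the style of \cite{ST06,Y} (alternatively by invoking the degenerate Monge--Amp\`ere theory of \cite{DP,EGZ08}); it does \emph{not} use a pointwise maximum principle for this direction.

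The problem with your perturbation argument is the final step ``letting $\varepsilon\to 0$''. For this to give $\inf_X\varphi\ge -C$, you need the constant $C(\varepsilon)$ in $\varphi(x)\ge -C(\varepsilon)+\varepsilon\log\sigma(x)$ to remain bounded as $\varepsilon\to 0$, and it does not. Concretely: at the minimum $y_\varepsilon$ of $\Phi_\varepsilon$ you only know $\sqrt{-1}\partial\bar\partial\varphi\ge \varepsilon\,\sqrt{-1}\partial\bar\partial\log\sigma\ge -C\varepsilon\,\sigma(y_\varepsilon)^{-2}f^*\chi$. To compare determinants you need $\omega_t-C\varepsilon\,\sigma(y_\varepsilon)^{-2}f^*\chi\ge 0$, which forces $\sigma(y_\varepsilon)\gtrsim\sqrt{\varepsilon}$; nothing prevents $y_\varepsilon$ from sitting exactly at this scale. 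Even granting positivity, the resulting bound is
\[
e^{\frac{1+t}{t}\varphi(y_\varepsilon)}\ \ge\ c\cdot\frac{\omega_0\wedge f^*\chi}{\Omega}(y_\varepsilon)\ \ge\ c\,\sigma(y_\varepsilon)^{\lambda'}
\]
for some fixed $\lambda'>0$ (this is the same degeneration of $\omega_0\wedge f^*\chi/\Omega$ that you yourself flag). Hence $\Phi_\varepsilon(y_\varepsilon)\ge \log c+(\lambda'-\varepsilon)\log\sigma(y_\varepsilon)$, and with $\sigma(y_\varepsilon)\sim\sqrt{\varepsilon}$ this gives $C(\varepsilon)\sim -\tfrac{\lambda'}{2}\log\varepsilon\to\infty$. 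Optimizing over $\varepsilon$ at best yields $\varphi(x)\ge -C-\lambda'\log\log\sigma(x)^{-1}$, which still blows down near the singular fibers and does not give \eqref{C0}.

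The underlying reason is structural: near a singular fiber one only has $\omega_t^2\ge c(1+t)^{-2}\Omega$, off by a full factor of $(1+t)$ from what a pointwise argument needs. No local barrier built from $\sigma$ can recover that factor. Moser iteration (or Ko{\l}odziej-type capacity estimates) succeeds because it is an \emph{integral} method: the bad set has small volume and the right-hand side of \eqref{lt1} lies in $L^p$ uniformly, which is precisely the input those techniques require.
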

\begin{proof}
It suffices to prove (\ref{C0}). Firstly note that $\omega_{t}^{2}\leq C\frac{\Omega}{(1+t)}$. By applying the maximum principle in (\ref{lt1}), the upper bound of $\varphi$ follows easily. Then (\ref{C0}) can be proved by the Moser iteration as in \cite{ST06,Y}. We remark that this lemma also follows from general theory of degenerate complex Monge-Amp\`{e}re equations, see \cite{DP,EGZ08}.
\end{proof}

\begin{lem}\label{C2a'}
There exists a positive constant $C$ such that for all $t\geq1$
\begin{equation}\label{C2a}
tr_{\omega}f^{*}\chi\leq C.
\end{equation}
\end{lem}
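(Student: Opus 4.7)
\textbf{Proof plan for Lemma \ref{C2a'}.} The plan is to adapt the classical Chern--Lu / Schwarz-lemma argument to the Monge--Amp\`ere equation \eqref{lt1}, exploiting the Ricci lower bound \eqref{Ric bound} already available for $t\geq 1$, the bounded Gaussian curvature of $\chi$ on the compact curve $\Sigma$, and the uniform $C^0$-estimate on $\varphi$ from Lemma \ref{C0'}. The key quantity is $\log \mathrm{tr}_\omega f^*\chi$, and the main device is the maximum principle applied to an auxiliary function $H=\log \mathrm{tr}_\omega f^*\chi - A\varphi$ for a sufficiently large constant $A$.

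First, on the open dense set $U:=\{f^*\chi\neq 0\}\subset X$ (i.e.\ the complement of the critical locus of $f$), the function $\log \mathrm{tr}_\omega f^*\chi$ is smooth. The standard Chern--Lu computation for a holomorphic map into a Riemann surface (where the bisectional curvature of the target reduces to its Gaussian curvature) gives, at points of $U$,
\begin{equation*}
\Delta_\omega \log \mathrm{tr}_\omega f^*\chi \;\geq\; -K\,\mathrm{tr}_\omega f^*\chi \;+\; \frac{(\mathrm{Ric}(\omega),\,f^*\chi)_\omega}{\mathrm{tr}_\omega f^*\chi},
\end{equation*}
where $K$ is an upper bound for the Gaussian curvature of $\chi$ on $\Sigma$. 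Using \eqref{Ric bound}, i.e.\ $\mathrm{Ric}(\omega)\geq -2\omega$, the last term is bounded below by $-2$, hence on $U$
\begin{equation*}
\Delta_\omega \log \mathrm{tr}_\omega f^*\chi \;\geq\; -K\,\mathrm{tr}_\omega f^*\chi - 2.
\end{equation*}

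Next, since $\omega = \omega_t + \sqrt{-1}\partial\bar\partial\varphi$ and $\dim_{\mathbb C} X = 2$, we have $\Delta_\omega \varphi = 2 - \mathrm{tr}_\omega \omega_t$. From the definition $\omega_t = \tfrac{1}{1+t}\omega_0 + \tfrac{t}{1+t}f^*\chi$ and $t\geq 1$,
\begin{equation*}
\mathrm{tr}_\omega \omega_t \;\geq\; \tfrac{t}{1+t}\,\mathrm{tr}_\omega f^*\chi \;\geq\; \tfrac{1}{2}\,\mathrm{tr}_\omega f^*\chi.
\end{equation*}
Combining these with $H=\log \mathrm{tr}_\omega f^*\chi - A\varphi$ yields on $U$
\begin{equation*}
\Delta_\omega H \;\geq\; \Big(\tfrac{A}{2}-K\Big)\mathrm{tr}_\omega f^*\chi - 2A - 2.
\end{equation*}
Choosing $A:=2(K+1)$, the coefficient of $\mathrm{tr}_\omega f^*\chi$ is at least $1$, so $\Delta_\omega H \geq \mathrm{tr}_\omega f^*\chi - C_1$ on $U$.

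Finally, $\mathrm{tr}_\omega f^*\chi$ is smooth and bounded on the compact manifold $X$, so $\sup_X H < \infty$; since $H\equiv -\infty$ on $X\setminus U$, the supremum is attained at some $x_0\in U$. At $x_0$ the maximum principle gives $\Delta_\omega H(x_0)\leq 0$, hence $\mathrm{tr}_\omega f^*\chi(x_0)\leq C_1$. Using $\|\varphi\|_{C^0}\leq C$ from Lemma \ref{C0'},
\begin{equation*}
\log \mathrm{tr}_\omega f^*\chi(x) \;\leq\; H(x_0) + A\varphi(x) \;\leq\; \log C_1 + 2A\,\|\varphi\|_{C^0(X)} \qquad \text{for all } x\in X,
\end{equation*}
which is the desired estimate. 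The only delicate point is the behaviour near the degeneracy locus of $f^*\chi$ on singular fibers, but this is in fact a help rather than an obstacle: $H\to-\infty$ there automatically forces the maximum into $U$, where $\log \mathrm{tr}_\omega f^*\chi$ is smooth and the Chern--Lu inequality applies. The main quantitative inputs, namely the Ricci lower bound \eqref{Ric bound}, the fixed curvature of $\chi$, and the $C^0$-bound on $\varphi$, are all already in hand.
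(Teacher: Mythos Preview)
Your proof is correct and follows essentially the same approach as the paper: both apply the Schwarz/Chern--Lu inequality to $\log\mathrm{tr}_\omega f^*\chi$, subtract a large multiple of $\varphi$ to gain a positive $\mathrm{tr}_\omega f^*\chi$ term via $\Delta_\omega\varphi = 2 - \mathrm{tr}_\omega\omega_t$, and conclude by the maximum principle together with the $C^0$-bound on $\varphi$. Your write-up is simply more explicit than the paper's three-line sketch, and your observation that the maximum must lie in the set $\{f^*\chi\neq 0\}$ (since $H\to -\infty$ on the critical locus of $f$) is a nice clarification that the paper leaves implicit.
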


\begin{proof}
Along (\ref{lt}), the Schwarz Lemma argument (see, e.g., \cite{Y78,ST06}) gives
\begin{equation}
\Delta_{\omega}(\log tr_{\omega}f^{*}\chi-2(A+1)\varphi)\geq tr_{\omega}f^{*}\chi\nonumber-4(A+1)-2\nonumber
\end{equation}
for some fixed large constant $A$. Using the maximum principle and the $C^{0}$-estimate (\ref{C0}), we have proved (\ref{C2a}).
\end{proof}

Using (\ref{C2a}) and the same argument in \cite{ST06}, we can find a positive constant $\lambda_{1}$ such that for all $t\geq1$,
\begin{equation}\label{2.44}
\sup_{X_{s}}\varphi-\inf_{X_{s}}\varphi\leq\frac{C (1+t)^{-1}}{\sigma^{\lambda_{1}}}.
\end{equation}

Next, as in \cite{ST06}, we define a function $\bar{\varphi}$ on $\Sigma$ by
\begin{equation}
\bar{\varphi}(s)=\frac{\int_{X_{s}}\varphi(\omega_{0}|_{X_{s}})}{\int_{X_{s}}\omega_{0}|_{X_{s}}}\nonumber.
\end{equation}
Then (\ref{2.44}) implies that
\begin{equation}\label{2.5}
(1+t)|\varphi-\bar{\varphi}|\leq \frac{C}{\sigma^{\lambda_{1}}}.
\end{equation}
\begin{lem}\label{lem2}
\begin{equation}
\Delta_{\omega}((1+t)(\varphi-\bar{\varphi}))\leq-tr_{\omega}\omega_{0}+\frac{1}{Vol(X_{s})}tr_{\omega}\left(\int_{X_{s}}\omega_{0}^{2}\right)+2(1+t).
\end{equation}
\end{lem}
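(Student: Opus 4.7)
The plan is to compute $\Delta_\omega((1+t)\varphi)$ and $\Delta_\omega((1+t)\bar\varphi)$ separately and then subtract. For the former, rewriting the relation $\omega = \omega_t + \sqrt{-1}\partial\bar\partial\varphi$ in light of $(1+t)\omega_t = \omega_0 + t f^*\chi$ gives
\[(1+t)\sqrt{-1}\partial\bar\partial\varphi = (1+t)\omega - \omega_0 - t f^*\chi,\]
so tracing against $\omega$ yields at once
\[\Delta_\omega((1+t)\varphi) = 2(1+t) - tr_\omega\omega_0 - t\,tr_\omega f^*\chi.\]

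For the latter, I would use that on $\Sigma_{reg}$ one has $Vol(X_s)\bar\varphi = f_*(\varphi\,\omega_0)$, where $f_*$ denotes fiber integration. Because $f_*$ commutes with $\sqrt{-1}\partial\bar\partial$ on $\Sigma_{reg}$ and $\omega_0$ is closed,
\[Vol(X_s)\sqrt{-1}\partial\bar\partial\bar\varphi = f_*\bigl(\sqrt{-1}\partial\bar\partial\varphi\wedge\omega_0\bigr).\]
Substituting the displayed identity for $(1+t)\sqrt{-1}\partial\bar\partial\varphi$ and invoking the projection formula $f_*(f^*\chi\wedge\omega_0) = Vol(X_s)\,\chi$ gives
\[Vol(X_s)(1+t)\sqrt{-1}\partial\bar\partial\bar\varphi = (1+t)f_*(\omega\wedge\omega_0) - f_*(\omega_0^2) - t\,Vol(X_s)\,\chi.\]
Pulling back by $f$ and tracing against $\omega$ then produces an expression for $\Delta_\omega((1+t)\bar\varphi)$ in which exactly the same $-t\,tr_\omega f^*\chi$ appears as in the previous paragraph.

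On subtraction the two $-t\,tr_\omega f^*\chi$ contributions cancel, leaving
\[\Delta_\omega((1+t)(\varphi-\bar\varphi)) = 2(1+t) - tr_\omega\omega_0 + \frac{1}{Vol(X_s)}tr_\omega\!\left(\int_{X_s}\omega_0^2\right) - \frac{1+t}{Vol(X_s)}tr_\omega f^*f_*(\omega\wedge\omega_0),\]
with the notation $\int_{X_s}\omega_0^2$ read as $f^*f_*(\omega_0^2)$. Since $\omega\wedge\omega_0$ is a nonnegative $(2,2)$-form on $X$, its fiber pushforward is a nonnegative $(1,1)$-form on $\Sigma$, so the final summand is nonpositive and may be discarded to obtain the stated inequality.

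The only nontrivial input is the commutation of $\sqrt{-1}\partial\bar\partial$ with fiber integration and the accompanying projection formula; both are standard where the fibration is smooth, i.e.\ on $f^{-1}(\Sigma_{reg})$, which is where the pointwise computation takes place. The hardest step to get right is therefore bookkeeping: making sure the $t f^*\chi$ term enters $\Delta_\omega\bar\varphi$ with the correct coefficient so that it cancels against its counterpart from $\Delta_\omega\varphi$, leaving a genuinely useful inequality.
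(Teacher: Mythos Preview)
Your argument is correct and is essentially the standard computation: the paper itself gives no proof beyond the sentence ``The proof is the same as \cite[Lemma 5.9]{ST06},'' and what you have written is precisely the Song--Tian fiber-integration computation adapted from the K\"ahler--Ricci flow setting to the continuity equation \eqref{lt1}. The only change from \cite{ST06} is the replacement of the flow time factor $e^t$ by $(1+t)$, which you have tracked correctly.
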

\begin{proof}
The proof is the same as \cite[Lemma 5.9]{ST06}.
\end{proof}

Next lemma can be easily checked by a direct computation.
\begin{lem}
There exists a positive constant $C$ such that for all $t\geq1$,
\begin{equation}
\Delta_{\omega}\log tr_{\omega}((1+t)^{-1}\omega_{0})\geq-Ctr_{\omega}\omega_{0}-C.
\end{equation}
\end{lem}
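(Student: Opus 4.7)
The plan is to recognize this as a routine application of Yau's Schwarz lemma (or equivalently the Chern--Lu inequality) for the identity map $\mathrm{id}:(X,\omega(t))\to(X,\omega_{0})$, exploiting that $\omega_{0}$ is a fixed smooth K\"ahler metric on the compact surface $X$ together with the Ricci lower bound \eqref{Ric bound} that is available for $t\geq 1$.

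First I would carry out the trivial reduction
$$\log tr_{\omega}\bigl((1+t)^{-1}\omega_{0}\bigr)=\log tr_{\omega}\omega_{0}-\log(1+t),$$
so that the spatially constant term $\log(1+t)$ is annihilated by $\Delta_{\omega}$. It therefore suffices to establish an inequality of the form
$$\Delta_{\omega}\log tr_{\omega}\omega_{0}\geq -K_{1}-K_{2}\,tr_{\omega}\omega_{0}$$
with constants $K_{1},K_{2}>0$ independent of $t\geq 1$.

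Then I would invoke the standard Schwarz lemma calculation: if $\omega$ and $\omega_{0}$ are K\"ahler metrics on $X$ with $Ric(\omega)\geq -K_{1}\omega$ and with holomorphic bisectional curvature of $\omega_{0}$ bounded above by $K_{2}$, then $\Delta_{\omega}\log tr_{\omega}\omega_{0}\geq -K_{1}-K_{2}\,tr_{\omega}\omega_{0}$. In our setting $\omega_{0}$ is a fixed smooth K\"ahler metric on the compact surface $X$, so its holomorphic bisectional curvature is uniformly bounded; and by \eqref{Ric bound} we may take $K_{1}=2$ for all $t\geq 1$. Taking $C=\max(K_{1},K_{2})$ then yields the lemma.

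I do not anticipate any substantive obstacle here: the Schwarz lemma inequality above is classical, going back to Yau \cite{Y78}, and is used in the essentially identical form in the proof of Lemma \ref{C2a'} above. Unlike Lemmas \ref{C2a'} and \ref{lem2}, no auxiliary use of the potential $\varphi$ or of the defining function $\sigma$ is needed; the assertion follows directly from the uniform Ricci lower bound for the evolving metric $\omega(t)$ and the $t$-independent bisectional curvature bound for the reference metric $\omega_{0}$.
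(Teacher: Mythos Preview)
Your proposal is correct and matches what the paper intends: the paper simply states that the lemma ``can be easily checked by a direct computation,'' and the direct computation in question is precisely the Schwarz--Chern--Lu inequality you describe, using the Ricci lower bound \eqref{Ric bound} for $\omega(t)$ and the fixed upper bound on the bisectional curvature of $\omega_{0}$. Your reduction via $\log tr_{\omega}((1+t)^{-1}\omega_{0})=\log tr_{\omega}\omega_{0}-\log(1+t)$ is the right first step, and nothing further is needed.
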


\begin{lem}\label{C2b'}
There exist positive constants $\lambda_{2}$ and $C$ such that for all $t\geq1$,
\begin{equation}\label{C2b}
tr_{\omega}((1+t)^{-1}\omega_{0})\leq Ce^{C\sigma^{-\lambda_{2}}}.
\end{equation}
\end{lem}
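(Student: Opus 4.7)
The plan is to apply the maximum principle to an auxiliary function that combines $\log tr_\omega((1+t)^{-1}\omega_0)$ with the barrier $(1+t)(\varphi-\bar\varphi)$ (pointwise controlled by \eqref{2.5}) and an extra power-of-$\sigma$ barrier. Concretely, for constants $A, B, \lambda_2>0$ to be chosen, I set
$$H = \log tr_\omega((1+t)^{-1}\omega_0) - A(1+t)(\varphi-\bar\varphi) - B\sigma^{-\lambda_2}.$$
Since $\omega(t)$ is a smooth K\"ahler metric on the compact $X$, $\log tr_\omega((1+t)^{-1}\omega_0)$ is smooth and in particular finite on the singular fibers; combined with $A(1+t)|\varphi-\bar\varphi|\leq CA\sigma^{-\lambda_1}$ from \eqref{2.5}, choosing $\lambda_2>\lambda_1$ and $B$ large ensures $H\to -\infty$ along the singular fibers, so the supremum of $H$ is attained at some interior point $p_0\in X_{reg}$.

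At $p_0$, $\Delta_\omega H(p_0)\leq 0$. I would combine three contributions: the previous lemma gives $\Delta_\omega\log tr_\omega((1+t)^{-1}\omega_0)\geq -C\, tr_\omega\omega_0 - C$; Lemma \ref{lem2} gives, after the sign flip, $-A\Delta_\omega((1+t)(\varphi-\bar\varphi))\geq A\, tr_\omega\omega_0 - \frac{A}{Vol(X_s)}tr_\omega\int_{X_s}\omega_0^2 - 2A(1+t)$; and a direct computation of $\Delta_\omega(\sigma^{-\lambda_2})$ using \eqref{00} and the Schwarz-type bound $tr_\omega f^*\chi\leq C$ of Lemma \ref{C2a'} yields $|\Delta_\omega(\sigma^{-\lambda_2})|\leq C\sigma^{-\lambda_2-2}$. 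Choosing $A$ larger than the absorbing constant $C$ and using that the fiber-integrated $(1,1)$-form $\frac{1}{Vol(X_s)}\int_{X_s}\omega_0^2$ is bounded near the singular locus by $C\sigma^{-N}f^*\chi$ for some $N$ (a consequence of the structure of the elliptic fibration near the multiple fibers $m_iF_i$, again combined with $tr_\omega f^*\chi\leq C$), dividing through by $(1+t)\geq 2$ produces a polynomial, $t$-independent bound
$$tr_\omega((1+t)^{-1}\omega_0)(p_0) \leq C\sigma(p_0)^{-\lambda'}$$
for some $\lambda'=\lambda'(N,\lambda_2)$.

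Taking logarithms, $\log tr_\omega((1+t)^{-1}\omega_0)(p_0)\leq C+\lambda'\log\sigma(p_0)^{-1}$, and this together with $A(1+t)|\varphi-\bar\varphi|(p_0)\leq CA\sigma(p_0)^{-\lambda_1}$ from \eqref{2.5} is dominated by $B\sigma(p_0)^{-\lambda_2}$ once $\lambda_2$ exceeds both $\lambda_1$ and a fixed multiple of $\lambda'$ and $B$ is chosen large enough. Hence $H(p_0)\leq C$, so $H\leq C$ on $X$, and rearranging while invoking \eqref{2.5} once more yields $\log tr_\omega((1+t)^{-1}\omega_0)\leq C+C\sigma^{-\lambda_2}$, which after exponentiation is precisely \eqref{C2b}. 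The main obstacle is the geometric input $\frac{1}{Vol(X_s)}\int_{X_s}\omega_0^2\leq C\sigma^{-N}f^*\chi$, where the specific structure of the minimal elliptic surface — the types of the singular fibers $X_{s_i}=m_iF_i$ and the order of vanishing of $\sigma$ at $s_i$ — enters in an essential way; the exponential shape $e^{C\sigma^{-\lambda_2}}$ of the final bound is the natural (and weaker) output of the log-of-trace plus power-of-$\sigma$ barrier scheme, the loss being the price paid for uniformity in $t$.
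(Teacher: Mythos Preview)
Your argument is correct and complete (the phrase ``$\lambda_2$ exceeds a fixed multiple of $\lambda'$'' is a slip --- since $\lambda'\ge\lambda_2+2$ this is impossible, but also unnecessary: $\lambda'\log\sigma^{-1}$ is dominated by any positive power $\sigma^{-\lambda_2}$, so $\lambda_2>\lambda_1$ alone suffices to make $g(\sigma)=C+\lambda'\log\sigma^{-1}+CA\sigma^{-\lambda_1}-B\sigma^{-\lambda_2}$ bounded above on $(0,1]$).

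The paper takes a slightly different route: instead of \emph{subtracting} a barrier $B\sigma^{-\lambda_2}$, it \emph{multiplies} by a cutoff, setting $H=\sigma^{\lambda_2}\bigl(\log tr_\omega((1+t)^{-1}\omega_0)-A(1+t)(\varphi-\bar\varphi)\bigr)$, so that $H$ extends by zero across the singular fibers and the maximum is automatically attained in $X_{reg}$. The product rule produces a gradient cross term $2\,Re\bigl(\nabla H\cdot\overline{\nabla}\sigma^{\lambda_2}/\sigma^{\lambda_2}\bigr)$, harmless at the maximum where $\nabla H=0$, and a term $(\Delta_\omega\sigma^{\lambda_2})\cdot(\cdots)$, controlled via \eqref{00} and $tr_\omega f^*\chi\le C$ --- exactly parallel to your handling of $\Delta_\omega(\sigma^{-\lambda_2})$. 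Both schemes rely on the same geometric input you flag, namely the polynomial bound $\tfrac{1}{Vol(X_s)}\int_{X_s}\omega_0^2\le C\sigma^{-N}f^*\chi$ (this is what forces $\lambda_2$ to be ``large enough'' in the paper's version). Your additive barrier avoids the product-rule bookkeeping and keeps the three Laplacian contributions cleanly separated; the paper's multiplicative cutoff has the minor convenience that $H$ lives on the compact $X$ rather than on $X_{reg}$, and follows \cite{ST06} verbatim. Neither approach is more general; they are interchangeable packagings of the same maximum-principle idea.
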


\begin{proof}
Set $H=\sigma^{\lambda_{2}}(\log tr_{\omega}((1+t)^{-1}\omega_{0})-A(1+t)(\varphi-\bar{\varphi}))$. By the Schwarz Lemma argument (see, e.g., \cite{Y78,ST06}), we have
\begin{equation}
\Delta_{\omega}H\geq\frac{A}{3}\sigma^{\lambda_{2}}tr_{\omega}\omega_{0}+2Re\left(\frac{\nabla H\overline{\nabla}\sigma^{\lambda_{2}}}{\sigma^{\lambda_{2}}}\right)-3A(1+t)\nonumber
\end{equation}
if we choose $\lambda_{2}$ and $A$ large enough. Now by the maximum principle and (\ref{2.5}), (\ref{C2b}) follows.
\end{proof}

\begin{cor}
There exist positive constants $C$ and $\lambda_{2}$ such that for all $t\geq1$,
\begin{equation}\label{ric}
Ric(\omega)\leq Ce^{C\sigma^{-\lambda_{2}}}\omega.
\end{equation}
\end{cor}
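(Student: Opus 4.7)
The plan is to read the Ricci form directly off the continuity equation \eqref{lt} and then combine it with the second-order estimate of Lemma \ref{C2b'}. First I would rearrange \eqref{lt} to obtain the pointwise identity
\[
Ric(\omega(t)) \;=\; \frac{1}{t}\omega_{0} \;-\; \frac{1+t}{t}\omega(t).
\]
Since $\omega(t)>0$ and $\frac{1+t}{t}>0$, dropping the negative term gives the one-sided bound $Ric(\omega(t))\le \frac{1}{t}\omega_{0}$ as $(1,1)$-forms.

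The next step is a linear-algebra observation specific to complex dimension two: for any nonnegative Hermitian $(1,1)$-form $\eta$, if one simultaneously diagonalizes $\eta$ in an $\omega$-orthonormal frame with eigenvalues $\lambda_1,\lambda_2\ge 0$, then $\eta \le \max(\lambda_1,\lambda_2)\,\omega \le (\lambda_1+\lambda_2)\,\omega = (tr_{\omega}\eta)\,\omega$. Applied to $\eta=\omega_{0}$, this yields $\omega_{0}\le (tr_{\omega}\omega_{0})\,\omega$.

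Combining this with Lemma \ref{C2b'}, which gives $tr_{\omega}\omega_{0} \le C(1+t)\,e^{C\sigma^{-\lambda_{2}}}$, I would conclude
\[
Ric(\omega(t)) \;\le\; \frac{1+t}{t}\,C\,e^{C\sigma^{-\lambda_{2}}}\,\omega(t),
\]
and then use $\frac{1+t}{t}\le 2$ for $t\ge 1$ to absorb the factor into the constant, yielding \eqref{ric}. There is no substantial obstacle here: the corollary is an immediate algebraic consequence of the preceding lemma, the only point worth isolating being the dimension-two fact that an upper bound on the trace of a nonnegative $(1,1)$-form is equivalent (up to a harmless constant) to an upper bound on the form itself.
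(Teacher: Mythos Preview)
Your proposal is correct and follows exactly the same route as the paper: rearrange \eqref{lt} to get $Ric(\omega)=t^{-1}\omega_{0}-\frac{1+t}{t}\omega$, then bound $\omega_{0}$ by $(tr_{\omega}\omega_{0})\omega$ and invoke Lemma \ref{C2b'}. One small remark: the inequality $\eta\le (tr_{\omega}\eta)\,\omega$ for a nonnegative Hermitian $(1,1)$-form $\eta$ is not special to complex dimension two---your own eigenvalue argument shows $\max_i\lambda_i\le\sum_i\lambda_i$ in any dimension---so there is no need to flag it as a two-dimensional fact.
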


\begin{proof}
Along the continuity equation (\ref{lt}) we have, for $t\geq1$,
\begin{equation}
Ric(\omega)=t^{-1}\omega_{0}-\frac{(1+t)}{t}\omega\nonumber.
\end{equation}
Thus (\ref{ric}) follows from Lemma \ref{C2b'} immediately.
\end{proof}

Combining (\ref{C2a}) and (\ref{C2b}) we have
\begin{equation}
tr_{\omega}\omega_{t}\leq Ce^{C\sigma^{-\lambda_{2}}}\nonumber.
\end{equation}
Then
\begin{align}
tr_{\omega_{t}}\omega&\leq (tr_{\omega}\omega_{t})\frac{\omega^{2}}{\omega_{t}^{2}}\nonumber\\
&\leq C(tr_{\omega}\omega_{t})\frac{(1+t)^{-1}\Omega}{(1+t)^{-1}\omega_{0}\wedge f^{*}\chi}\nonumber\\
&\leq Ce^{C\sigma^{-\lambda_{2}}}\sigma^{-\lambda_{2}'}\nonumber\\
&\leq Ce^{C\sigma^{-\lambda_{3}}}\nonumber
\end{align}
In conclusion,
\begin{equation}\label{C2c}
C^{-1}e^{-C\sigma^{-\lambda_{2}}}\omega_{t}\leq\omega\leq Ce^{C\sigma^{-\lambda_{3}}}\omega_{t}.
\end{equation}

Let $t_{j}$ be any time sequence converging to $\infty$. Since the cohomology class of $\omega(t)$ is bounded and $\varphi(t)$ is uniformly bounded for all $t\ge1$, using the weak compactness of currents, we may assume that $\omega(t_{j})$ converges to a limit closed positive $(1,1)$-current $\chi_{\infty}$ (see, e.g., \cite{De}), which a priori depends on the given sequence. Note that  $\chi_{\infty}\in[f^{*}\chi]$, $\chi_{\infty}=f^{*}\chi+\sqrt{-1}\partial\bar{\partial}\varphi_{\infty}$ and $\varphi(t_{j})\rightarrow\varphi_{\infty}$ in $L^{1}(X,\omega_{0}^{2})$, which in particular implies that $\varphi_{\infty}\in L^{\infty}(X)$. Indeed, after passing to a subsequence, we may assume $\varphi(t_j)\to\varphi_\infty$ a.e. on $X$ and hence, by Lemma \ref{C0'}, $|\varphi_\infty|\le C$ a.e. on $X$. Then, for any $x\in X$ and a fixed local potential $u$ of $f^*\chi$ (i.e. $f^*\chi=\sqrt{-1}\partial\bar\partial u$) on $B_{\omega_0}(x,\rho_0)$, applying e.g. \cite[Theorem K.15]{Gu} gives
$$|(u+\varphi_\infty)(x)|=\left|\lim_{\epsilon\to0}\frac{\int_{B_{\omega_0}(x,\epsilon)}(u+\varphi_\infty)\omega_0^2}{\int_{B_{\omega_0}(x,\epsilon)}\omega_0^2}\right|\le C$$
and so
$$|\varphi_\infty(x)|\le C$$
for some uniform constant $C$.

Moreover, using the estimates we have obtained above, we can assume that $\varphi(t_{j})\rightarrow\varphi_{\infty}$ in $C_{loc}^{1,\alpha}(X_{reg},\omega_{0})$ for any $\alpha\in(0,1)$.
\begin{lem}\label{lemlem}
There exists a function $\tilde\varphi_\infty\in PSH(\Sigma,\chi)\cap L^\infty(\Sigma)$ such that $f^*\tilde\varphi_\infty=\varphi_\infty$.
\end{lem}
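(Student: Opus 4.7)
The plan is to descend the bounded function $\varphi_\infty$ from $X$ to $\Sigma$: first, to show $\varphi_\infty$ is constant along every regular fiber of $f$; second, to define $\tilde\varphi_\infty$ on $\Sigma_{reg}$ and verify $\chi$-plurisubharmonicity there; finally, to extend $\tilde\varphi_\infty$ across the finite set $\{s_1,\ldots,s_k\}$ of critical values via a removable singularity argument.

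For the fiber-constancy step I would use the fiber-oscillation estimate \eqref{2.5}: on any compact $V \subset X_{reg}$ one has $\inf_V \sigma > 0$, so \eqref{2.5} gives $|\varphi(t_j) - f^*\bar\varphi(t_j)| \leq C_V/(1+t_j) \to 0$ uniformly on $V$. Combined with the $C^{1,\alpha}_{loc}(X_{reg})$-convergence $\varphi(t_j) \to \varphi_\infty$ and the tautological fact that $f^*\bar\varphi(t_j)$ is constant along the fibers of $f$, this forces $\varphi_\infty$ to be constant along every regular fiber. Setting $\tilde\varphi_\infty(s) := \varphi_\infty|_{X_s}$ then produces a bounded $C^{1,\alpha}$ function on $\Sigma_{reg}$ satisfying $\|\tilde\varphi_\infty\|_{L^\infty(\Sigma_{reg})} \leq \|\varphi_\infty\|_{L^\infty(X)}$.

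To verify $\tilde\varphi_\infty \in PSH(\Sigma_{reg},\chi)$, I would work in a local submersion chart: pick $s_0 \in \Sigma_{reg}$, a coordinate disk $D \ni s_0$, and coordinates $(z,w)$ on $f^{-1}(D) \subset X_{reg}$ in which $f(z,w)=z$ and $\chi|_D = h\sqrt{-1}dz\wedge d\bar z$ with $h>0$. Since $\varphi_\infty(z,w)=\tilde\varphi_\infty(z)$ depends only on $z$, the Hermitian matrix of the current $f^*\chi + \sqrt{-1}\partial\bar\partial\varphi_\infty$ is diagonal with entries $h + \partial_z\partial_{\bar z}\tilde\varphi_\infty$ and $0$ in the distributional sense. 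The non-negativity of this current on $X$ --- which $\varphi_\infty$ inherits as the weak limit of the K\"ahler metrics $\omega(t_j)$ --- then gives $h + \partial_z\partial_{\bar z}\tilde\varphi_\infty \geq 0$, which is exactly the $\chi$-PSH condition on $D$.

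Finally, to extend $\tilde\varphi_\infty$ across the critical values: on a small coordinate disk around each $s_i$, fix a local potential $v$ of $\chi$; then $v+\tilde\varphi_\infty$ is a bounded subharmonic function on the punctured disk, which extends uniquely to a bounded subharmonic function on the full disk by the classical removable singularity theorem for bounded subharmonic functions on a Riemann surface. This yields $\tilde\varphi_\infty \in PSH(\Sigma,\chi)\cap L^\infty(\Sigma)$, and the identity $f^*\tilde\varphi_\infty = \varphi_\infty$ holds on the dense open subset $X_{reg}$ by construction, hence everywhere (both sides being bounded $f^*\chi$-PSH functions agreeing on a dense open subset). The substantive step is the fiber-constancy, and it is obtained essentially for free from the sharp estimate \eqref{2.5}; the remaining verifications reduce to a local coordinate computation and a standard one-dimensional removable singularity statement.
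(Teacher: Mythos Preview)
Your proof is correct and takes a genuinely different route from the paper's. The paper proves directly that $\varphi_\infty$ is constant on \emph{every} fiber of $f$, including the singular ones: for a regular fiber $X_s$ it restricts the positive current $\chi_\infty$ to $X_s$ and uses that $f^*\chi|_{X_s}=0$, so $\varphi_\infty|_{X_s}$ is plurisubharmonic on a compact curve, hence constant; for a singular fiber $X_0$ it invokes an embedded Hironaka resolution $\tilde f:\tilde X\to X$ to replace $X_0$ by a smooth connected proper transform $\tilde X_0$ and repeats the same restriction argument there. Your argument bypasses the resolution step entirely: you deduce fiber-constancy on $X_{reg}$ from the quantitative oscillation estimate \eqref{2.5} (already established in the paper), define $\tilde\varphi_\infty$ only on $\Sigma_{reg}$, and then extend across the finitely many critical values by the one-dimensional removable-singularity theorem for bounded subharmonic functions. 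The paper's approach is more intrinsic --- it uses only the positivity of the limit current and would work verbatim if the base were higher-dimensional --- whereas your approach is more elementary (no Hironaka) and exploits the specific situation at hand: the base is a curve, so isolated points are removable for bounded $\chi$-PSH functions, and the sharp fiberwise estimate \eqref{2.5} is already available. The final identification $f^*\tilde\varphi_\infty=\varphi_\infty$ on all of $X$ is fine as stated, since two bounded $f^*\chi$-PSH functions agreeing off the pluripolar set $X\setminus X_{reg}$ must agree everywhere.
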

\begin{proof}
We present a proof similar to \cite[Theorem 6.3]{EGZ}. It suffices to show that $\varphi_\infty$ is constant on every fiber $X_s=f^{-1}(s)$. For $s\in \Sigma_{reg}$, i.e. $X_s$ is a nonsingular fiber,
when restricting $\chi_{\infty}$ to such $X_{s}$, we see that $\sqrt{-1}\partial\bar{\partial}\varphi_{\infty}|_{X_{s}}\geq0$. Hence $\varphi_{\infty}$ is constant on every nonsingular fiber $X_s$. For a singular fiber, $X_0$, by Hironaka's theorems (see \cite{Hi}) we fix an embedded resolution $\tilde f:\tilde X\to X$ of singularities of $X_0$, i.e., $\tilde X$ is a compact complex manifold, $\tilde f:\tilde X\to X$ is a holomorphic surjective map and is biholomorphic over $X\setminus X_0$, and the proper transform of $X_0$, denoted by $\tilde X_0$, is a smooth connected submanifold of $\tilde X$. Now we pullback $\chi_\infty$ to $\tilde X$ to obtain $\pi^*\chi_{\infty}=\pi^*f^*\chi+\sqrt{-1}\partial\bar\partial\pi^*\varphi_\infty$, which is a closed positive $(1,1)$-form on $\tilde X$. Now, as before, we can restrict $\pi^*\chi_\infty$ to $\tilde X_0$ to see that $\pi^*\varphi_\infty$ is plurisubharmonic on $\tilde X_0$ and hence is constant on $\tilde X_0$, which implies that $\varphi_\infty$ is constant on $X_0$.
\par Lemma \ref{lemlem} is proved.
\end{proof}
In the following, we identify $\tilde\varphi_\infty$ and $\varphi_\infty$.

On the other hand, let $\omega_{SF}:=\omega_{0}+\sqrt{-1}\partial\bar{\partial}\rho_{SF}$, where $\rho_{SF}$ is a smooth function on $X_{reg}$, be the semi-flat $(1,1)$-form defined by \cite[Lemma 3.1]{ST06}. Define $F:=\frac{\Omega}{2\omega_{SF}\wedge f^{*}\chi}$, which can be seen as a function $\in L^{1+\epsilon}(\Sigma,\chi)$ (see \cite{ST06,ST12,He}) and is smooth on $\Sigma_{reg}$. It was proved in \cite{ST06} (see also \cite{Ko1} for more general theory) that there exists a unique solution $\hat{\varphi}\in PSH(\Sigma,\chi)\cap C^{0}(\Sigma)\cap C^{\infty}(\Sigma_{reg})$ to the following equation on $\Sigma$:
\begin{equation}\label{KE}
\chi+\sqrt{-1}\partial\bar{\partial}\hat{\varphi}=Fe^{\hat{\varphi}}\chi.
\end{equation}

Here we will use the above estimates to prove the existence of a bounded solution of (\ref{KE}). Precisely, we have
\begin{lem}\label{KE'}
$\varphi_{\infty}$ is a bounded solution of equation (\ref{KE}).
\end{lem}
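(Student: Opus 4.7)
The plan is to pass to the limit $t=t_j\to\infty$ in the Monge-Amp\`ere equation \eqref{lt1}, after multiplying by $(1+t)$ and integrating against $f^*\eta$ for an arbitrary smooth test function $\eta$ on $\Sigma$. For the right-hand side, the uniform bound $|\varphi(t)|\le C$ from Lemma \ref{C0'} together with dominated convergence and fiber integration (using $\Omega=2F\omega_{SF}\wedge f^*\chi$ and the cohomologically constant fiber mass $V_0:=\int_{X_s}\omega_0|_{X_s}$) gives
$$\int_X (f^*\eta)(1+t_j)\omega(t_j)^2=\int_X(f^*\eta)e^{\frac{1+t_j}{t_j}\varphi(t_j)}\Omega\longrightarrow 2V_0\int_\Sigma\eta\, Fe^{\varphi_\infty}\chi.$$
The goal is to show the left-hand side tends to $2V_0\int_\Sigma\eta(\chi+\sqrt{-1}\partial\bar\partial\varphi_\infty)$; matching the two limits for arbitrary $\eta$ yields the claimed equation.

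To analyze the left-hand side I would decompose $(1+t)\omega(t)=\omega_0+tf^*\chi+(1+t)\sqrt{-1}\partial\bar\partial\varphi(t)$ and wedge with $\omega(t)$, producing three pieces. The first, $\int(f^*\eta)\omega_0\wedge\omega(t)$, is handled by the weak convergence $\omega(t_j)\to f^*\chi+\sqrt{-1}\partial\bar\partial\varphi_\infty$ followed by integration by parts (using that $\omega_0$ is closed and that $\varphi(t_j)\to\varphi_\infty$ in $L^1(X,\omega_0^2)$) and fiber integration; the limit is $V_0\int_\Sigma\eta(\chi+\sqrt{-1}\partial\bar\partial\varphi_\infty)$. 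The second piece, $t\int(f^*\eta)f^*\chi\wedge\omega(t)$, simplifies sharply because integration by parts annihilates the $\sqrt{-1}\partial\bar\partial\varphi(t)$ contribution: one computes $\sqrt{-1}\partial\bar\partial((f^*\eta)f^*\chi)=f^*((\sqrt{-1}\partial\bar\partial\eta)\wedge\chi)=0$ since $(\sqrt{-1}\partial\bar\partial\eta)\wedge\chi$ is a $(2,2)$-form on the curve $\Sigma$. What remains is $\tfrac{tV_0}{1+t}\int_\Sigma\eta\chi\to V_0\int_\Sigma\eta\chi$.

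The main obstacle is the third piece, $(1+t)\int(f^*\eta)\sqrt{-1}\partial\bar\partial\varphi(t)\wedge\omega(t)$. One integration by parts, using that $\omega(t)$ is closed, converts this into $\int\varphi(t)\,f^*(\sqrt{-1}\partial\bar\partial\eta)\wedge(1+t)\omega(t)$. Since $f^*(\sqrt{-1}\partial\bar\partial\eta)$ is purely horizontal, wedging with $(1+t)\omega(t)$ picks out only the vertical component, so fiber integration reduces the expression to $\int_\Sigma(\sqrt{-1}\partial\bar\partial\eta)\cdot\int_{X_s}\varphi(t)(1+t)\omega(t)|_{X_s}$. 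The crucial observation here is that by Stokes and the cohomology class of $\omega(t)$ one has the constant fiber mass $\int_{X_s}(1+t)\omega(t)|_{X_s}=V_0$, so one does \emph{not} need the fiber limit of $(1+t)\omega(t)|_{X_s}$ to be $\omega_{SF}|_{X_s}$ (which is part (2) of Theorem \ref{main}). Combining this with the almost-constancy \eqref{2.5}, $(1+t)|\varphi(t)-\bar\varphi|\le C\sigma^{-\lambda_1}$, and the pointwise convergence $\bar\varphi(t_j)(s)\to\tilde\varphi_\infty(s)$ on $\Sigma_{reg}$ (a consequence of the $C^{1,\alpha}_{loc}(X_{reg},\omega_0)$ convergence $\varphi(t_j)\to\varphi_\infty$), one may pull $\varphi(t)$ outside the fiber integral in the limit, obtaining $V_0\int_\Sigma\tilde\varphi_\infty\sqrt{-1}\partial\bar\partial\eta=V_0\int_\Sigma\eta\sqrt{-1}\partial\bar\partial\varphi_\infty$.

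Summing the three pieces, the left-hand side tends to $2V_0\int_\Sigma\eta(\chi+\sqrt{-1}\partial\bar\partial\varphi_\infty)$. Equating with the right-hand limit gives $\chi+\sqrt{-1}\partial\bar\partial\varphi_\infty=Fe^{\varphi_\infty}\chi$, first as an equality of currents tested against $\eta$ compactly supported in $\Sigma_{reg}$. To extend across the finite set $\{s_1,\ldots,s_k\}$, note that $\sqrt{-1}\partial\bar\partial\varphi_\infty$ carries no Dirac masses because $\varphi_\infty\in L^\infty(\Sigma)\cap PSH(\Sigma,\chi)$, while $Fe^{\varphi_\infty}\chi$ lies in $L^{1+\epsilon}(\chi)$; hence the equation holds on all of $\Sigma$, and $\varphi_\infty$ is a bounded solution of \eqref{KE}.
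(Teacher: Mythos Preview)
Your proof is correct and follows the same overall strategy as the paper: multiply \eqref{lt1} by $(1+t)$, integrate against the pullback of a test function supported in $\Sigma_{reg}$, pass to the limit, and then extend across the finitely many critical values using that neither side carries mass there. The difference is purely in the bookkeeping of the left-hand side. The paper splits $\omega(t)=\frac{1}{1+t}\omega_0+\sqrt{-1}\partial\bar\partial(\varphi-\bar\varphi)+\frac{t}{1+t}f^*\chi+\sqrt{-1}\partial\bar\partial\bar\varphi$ and squares, producing five terms $A_1,\dots,A_5$, whereas you write $(1+t)\omega(t)^2=\bigl(\omega_0+tf^*\chi+(1+t)\sqrt{-1}\partial\bar\partial\varphi\bigr)\wedge\omega(t)$ and get three. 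Your treatment of the last piece via the projection formula and the cohomological identity $\int_{X_s}(1+t)\omega(t)|_{X_s}=V_0$ is a small genuine economy: the paper's corresponding term $A_3$ contains $(1+t)(\sqrt{-1}\partial\bar\partial(\varphi-\bar\varphi))^2$ and needs the pointwise two-sided fiber estimate \eqref{C2c} to show it vanishes in the limit, while your asymmetric decomposition keeps one factor as the full positive form $\omega(t)$ and so only needs its (constant) fiber mass together with \eqref{2.5}. Both arguments ultimately rest on the same ingredients---\eqref{2.5}, the vanishing forced by $\dim\Sigma=1$, fiber integration, and the established convergence of $\varphi(t_j)$---so this is a reorganization rather than a different method.
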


\begin{proof}
Since $\varphi_{\infty}$ is bounded, $\chi+\sqrt{-1}\partial\bar\partial\varphi_\infty$ takes no mass on pluripolar sets, e.g. $\Sigma\setminus\Sigma_{reg}$ (see e.g. \cite{Ko05}). Moreover, using $F\in L^{1+\epsilon}(\Sigma)$ and H\"older inequality, one easily sees that $e^{\varphi_\infty}F\chi$ also takes no mass on $\Sigma\setminus\Sigma_{reg}$. Therefore, it suffices to show that for any given $K\subset\subset\Sigma_{reg}$ and any given $\phi\in C_{0}^{\infty}(K)$,
\begin{equation}\label{KE2}
\int_{\Sigma}\phi(\chi+\sqrt{-1}\partial\bar{\partial}\varphi_{\infty})=\int_{\Sigma}\phi Fe^{\varphi_{\infty}}\chi.
\end{equation}
To this end, we use an argument similar to \cite[Theorem 4.1]{To10}. Firstly, using the equation (\ref{lt1}), we have
\begin{equation}\label{KE3}
\int_{X}(f^{*}\phi)e^{\frac{1+t}{t}\varphi}\Omega=\int_{X}(f^{*}\phi)(1+t)(\omega_{t}+\sqrt{-1}\partial\bar{\partial}\varphi)^{2}.
\end{equation}
As $t_{j}\rightarrow\infty$, the left hand side of (\ref{KE3}) will go to
\begin{equation}\label{KE4.1}
\int_{X}(f^{*}\phi)e^{\varphi_{\infty}}\Omega=2\int_{X}(f^{*}\phi)e^{\varphi_{\infty}}F\omega_{SF}\wedge f^{*}\chi=2\int_{\Sigma}\phi Fe^{\varphi_{\infty}}\chi\int_{X_{s}}\omega_{SF}|_{X_{s}}.
\end{equation}
For the right hand side of (\ref{KE3}), we have
\begin{align}
&\int_{X}(f^{*}\phi)(1+t)(\omega_{t}+\sqrt{-1}\partial\bar{\partial}\varphi)^{2}\nonumber\\
&=\int_{X}(f^{*}\phi)(1+t)\left(\frac{1}{1+t}\omega_{0}+\sqrt{-1}\partial\bar{\partial}(\varphi-\bar{\varphi})+\frac{t}{1+t}f^{*}\chi+\sqrt{-1}\partial\bar{\partial}\bar{\varphi}\right)^{2}\nonumber\\
&=\int_{X}(f^{*}\phi)\frac{1}{1+t}\omega_{0}^{2}\nonumber\\
&+2\int_{X}(f^{*}\phi)\omega_{0}\wedge\sqrt{-1}\partial\bar{\partial}(\varphi-\bar{\varphi})\nonumber\\
&+\int_{X}(f^{*}\phi)(1+t)(\sqrt{-1}\partial\bar{\partial}(\varphi-\bar{\varphi}))^{2}\nonumber\\
&+2\int_{X}(f^{*}\phi)(1+t)\sqrt{-1}\partial\bar{\partial}(\varphi-\bar{\varphi})\wedge(\frac{t}{1+t}f^{*}\chi+\sqrt{-1}\partial\bar{\partial}\bar{\varphi})\nonumber\\
&+2\int_{X}(f^{*}\phi)\omega_{0}\wedge(\frac{t}{1+t}f^{*}\chi+\sqrt{-1}\partial\bar{\partial}\bar{\varphi})\nonumber\\
&=A_{1}+A_{2}+A_{3}+A_{4}+A_{5}\nonumber.
\end{align}
(1) The first term $A_{1}$ will go to zero as $t\rightarrow\infty$.\\
(2) The second term
\begin{align}
A_{2}&=2\int_{X}(f^{*}\phi)\omega_{0}\wedge\sqrt{-1}\partial\bar{\partial}(\varphi-\bar{\varphi})\nonumber\\
&=2\int_{X}(\varphi-\bar{\varphi})\omega_{0}\wedge\sqrt{-1}\partial\bar{\partial}(f^{*}\phi)\nonumber,
\end{align}
which will go to zero as $t\rightarrow\infty$ by (\ref{2.5}).\\
(3) The third term
\begin{align}
A_{3}&=\int_{X}(f^{*}\phi)(1+t)(\sqrt{-1}\partial\bar{\partial}(\varphi-\bar{\varphi}))^{2}\nonumber\\
&=\int_{X}(1+t)(\varphi-\bar{\varphi})\sqrt{-1}\partial\bar{\partial}(f^{*}\phi)\wedge\sqrt{-1}\partial\bar{\partial}(\varphi-\bar{\varphi})\nonumber\\
&=\int_{X}(1+t)(\varphi-\bar{\varphi})\sqrt{-1}\partial\bar{\partial}(f^{*}\phi)\wedge\sqrt{-1}\partial\bar{\partial}\varphi|_{X_{s}}\nonumber.
\end{align}
Note that $(1+t)(\varphi-\bar{\varphi})$ is uniformly bounded on $K$ by (\ref{2.5}) and
\begin{equation}
-(1+t)^{-1}\omega_{0}|_{X_{s}}\leq\sqrt{-1}\partial\bar{\partial}\varphi|_{X_{s}}\leq C_{K}(1+t)^{-1}\omega_{0}|_{X_{s}}\nonumber
\end{equation}
by (\ref{C2c}). Thus $A_{3}\rightarrow0$ as $t\rightarrow\infty$.\\
(4) The fourth term
\begin{align}
A_{4}&=2\int_{X}(f^{*}\phi)(1+t)\sqrt{-1}\partial\bar{\partial}(\varphi-\bar{\varphi})\wedge(\frac{t}{1+t}f^{*}\chi+\sqrt{-1}\partial\bar{\partial}\bar{\varphi})\nonumber\\
&=2\int_{X}(1+t)(\varphi-\bar{\varphi}))\sqrt{-1}\partial\bar{\partial}(f^{*}\phi)\wedge(\frac{t}{1+t}f^{*}\chi+\sqrt{-1}\partial\bar{\partial}\bar{\varphi})\nonumber\\
&=0\nonumber,
\end{align}
since the term $\sqrt{-1}\partial\bar{\partial}(f^{*}\phi)\wedge(\frac{t}{1+t}f^{*}\chi+\sqrt{-1}\partial\bar{\partial}\bar{\varphi})=f^*(\sqrt{-1}\partial\bar{\partial}\phi\wedge(\frac{t}{1+t}\chi+\sqrt{-1}\partial\bar{\partial}\bar{\varphi}))$ and, obviously, $\sqrt{-1}\partial\bar{\partial}\phi\wedge(\frac{t}{1+t}\chi+\sqrt{-1}\partial\bar{\partial}\bar{\varphi})$ vanishes on $\Sigma$ as $dim(\Sigma)=1$.\\
(5) For the last term $A_{5}$, first note that (\ref{2.5}) implies that $\bar{\varphi}(t_{j})\rightarrow\varphi_{\infty}$ in $L^{\infty}(K)$ as $t_{j}\rightarrow\infty$. So we have
\begin{align}\label{KE4.2}
A_{5}&=2\int_{X}(f^{*}\phi)\omega_{0}\wedge(\frac{t_{j}}{1+t_{j}}f^{*}\chi+\sqrt{-1}\partial\bar{\partial}\bar{\varphi}(t_{j}))\nonumber\\
&\rightarrow2\int_{X}(f^{*}\phi)\omega_{0}\wedge(f^{*}\chi+\sqrt{-1}\partial\bar{\partial}\varphi_{\infty})\nonumber\\
&=2\int_{\Sigma}\phi (\chi+\sqrt{-1}\partial\bar{\partial}\varphi_{\infty})\int_{X_{s}}\omega_{0}|_{X_{s}}.
\end{align}
Combining (\ref{KE3}), (\ref{KE4.1}), (\ref{KE4.2}) and the fact that $\int_{X_{s}}\omega_{SF}|_{X_{s}}=\int_{X_{s}}\omega_{0}|_{X_{s}}\equiv constant$, we obtain (\ref{KE2}).
\end{proof}

Let $\chi_{\infty}=\chi+\sqrt{-1}\partial\bar{\partial}\varphi_{\infty}$. Since the bounded solution of (\ref{KE}) is unique (see e.g. \cite{ST06}), we conclude that the above convergence holds without passing to a subsequence, i.e.,
\begin{lem}\label{conv}
$\varphi(t)\rightarrow\varphi_{\infty}$ in $L^{1}(X,\omega_{0}^{2})$ and $C_{loc}^{1,\alpha}(X_{reg},\omega_{0})$ for any $\alpha\in(0,1)$ as $t\rightarrow\infty$. In particular $\omega(t)\rightarrow f^{*}\chi_{\infty}$ in the sense of currents as $t\rightarrow\infty$.
\end{lem}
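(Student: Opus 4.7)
The statement to prove is that the \emph{full} family $\varphi(t)$ converges (not just along a subsequence), both in $L^1(X,\omega_0^2)$ and in $C^{1,\alpha}_{loc}(X_{reg},\omega_0)$, and that consequently the metrics converge as currents. Essentially everything in the excerpt is already in place; the remaining task is to upgrade the sequential subconvergence used throughout Section~2 to convergence of the entire net. This is a standard argument that leverages uniqueness of the limit.

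The plan is as follows. I would argue by contradiction. Suppose the $L^1(X,\omega_0^2)$ convergence fails; then there exist $\varepsilon>0$ and a sequence $t_j\to\infty$ with $\|\varphi(t_j)-\varphi_\infty\|_{L^1(X,\omega_0^2)}\ge\varepsilon$. By Lemma~\ref{C0'} the family $\{\varphi(t)\}_{t\ge1}$ is uniformly bounded and (being $\omega_t$-psh with $\omega_t\to f^*\chi$ smoothly) sits in a weakly compact family of quasi-psh functions; thus, passing to a subsequence, $\varphi(t_j)\to\psi_\infty$ in $L^1(X,\omega_0^2)$ for some bounded function $\psi_\infty$. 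Applying the same reasoning already carried out in the excerpt (weak compactness of currents, Lemma~\ref{lemlem} and Lemma~\ref{KE'}), the limit $\psi_\infty$ descends to a bounded $\chi$-psh function $\tilde\psi_\infty$ on $\Sigma$ that solves the Monge--Amp\`ere equation \eqref{KE}. But the bounded solution of \eqref{KE} is unique, as noted just before the lemma, so $\tilde\psi_\infty=\tilde\varphi_\infty$ and hence $\psi_\infty=\varphi_\infty$, contradicting $\|\varphi(t_j)-\varphi_\infty\|_{L^1}\ge\varepsilon$. This proves $\varphi(t)\to\varphi_\infty$ in $L^1(X,\omega_0^2)$.

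For the $C^{1,\alpha}_{loc}(X_{reg},\omega_0)$ convergence I would run exactly the same contradiction. The ingredient that makes it work is already established: the estimates \eqref{C2a}, \eqref{C2b}, \eqref{C2c}, together with the Monge--Amp\`ere equation \eqref{lt1}, give on any compact $V\subset X_{reg}$ uniform $C^{2,\beta}(V)$ bounds on $\varphi(t)$ (for some $\beta\in(0,1)$ depending on $V$, via the Evans--Krylov/Calabi third-order estimates applied where $\sigma$ is bounded below). Hence $\{\varphi(t)|_V\}$ is precompact in $C^{1,\alpha}(V)$ for any $\alpha<\beta$. Any sequential $C^{1,\alpha}(V)$-limit is in particular an $L^1$-limit on $V$, so by the first part it must equal $\varphi_\infty|_V$; uniqueness of the cluster point then yields convergence of the whole family in $C^{1,\alpha}(V,\omega_0)$.

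Finally, the convergence $\omega(t)\to f^*\chi_\infty$ as currents is automatic: since $\omega_t=\frac{1}{1+t}\omega_0+\frac{t}{1+t}f^*\chi\to f^*\chi$ smoothly on $X$ and $\varphi(t)\to\varphi_\infty$ in $L^1(X,\omega_0^2)$, the identity $\omega(t)=\omega_t+\sqrt{-1}\partial\bar\partial\varphi(t)$ gives $\omega(t)\to f^*\chi+\sqrt{-1}\partial\bar\partial\varphi_\infty=f^*\chi_\infty$ in the sense of currents. There is no real obstacle in this proof; the only delicate point is making sure the subsequential limit argument from Lemma~\ref{KE'} applies verbatim to an arbitrary subsequence $t_j\to\infty$, but this is transparent because every step in that lemma used only the uniform estimates of Section~2 and not the particular choice of subsequence.
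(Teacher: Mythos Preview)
Your proposal is correct and follows essentially the same route as the paper: the paper's proof consists of the single observation that, since the bounded solution of \eqref{KE} is unique, the subsequential limit obtained via Lemmas~\ref{lemlem} and~\ref{KE'} is independent of the sequence, whence the full family converges. Your contradiction argument is just the standard unpacking of this ``unique cluster point implies convergence'' step; the only minor excess is invoking Evans--Krylov for the $C^{1,\alpha}$ precompactness of $\varphi(t)$, whereas the uniform Laplacian bound \eqref{C2c} together with $W^{2,p}$ elliptic estimates already yields $C^{1,\alpha}$ bounds for every $\alpha\in(0,1)$.
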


\begin{rem}
Alternatively, we can easily apply arguments in \cite[Section 6]{ST06} to conclude the $L^1$-convergence in Lemma \ref{conv}.
\end{rem}

\par Next we shall prove the interior $C^{1,\alpha}$ estimate for $\omega(t)$ on $X_{reg}$, where we will apply an idea due to \cite{GTZ,HeTo} (see also \cite{FZ}). Let $B\subset\Sigma_{reg}$ small enough and $U=f^{-1}(B)$. There exists a holomorphic function $z(s)$ on $B$ such that $Im(z(s))>0$ and $U$ is biholomorphic to $B\times\mathbb{C}/\mathbb{Z}\oplus\mathbb{Z} z(s)$, which is compatible with the projection to $B$. Composing the quotient map $B\times\mathbb{C}\rightarrow B\times\mathbb{C}/\mathbb{Z}\oplus\mathbb{Z} z(s)$ with this biholomorphism, we obtain a local biholomorphism $p:B\times\mathbb{C}\rightarrow U$ such that $f\circ p(s,w)=s$ for all $(s,w)$. Moreover, by \cite{HeTo} there exists a closed semi-positive real $(1,1)$-form $\tilde{\omega}_{SF}$ on $U$ such that $\tilde{\omega}_{SF}=\omega_{0}+\sqrt{-1}\partial\bar{\partial}\rho$ for some $\rho\in C^{\infty}(U,\mathbb{R})$, $\tilde{\omega}_{SF}|_{X_{s}}$ is a flat metric on $X_{s}$ for all $s\in B$ and $p^{*}\tilde{\omega}_{SF}=\sqrt{-1}\partial\bar{\partial}\eta$, where $\eta(s,w)$ is a smooth real function on $B\times\mathbb{C}$ satisfying
\begin{equation}\label{scaling}
\eta(s,\lambda w)=\lambda^{2}\eta(s,w)
\end{equation}
for any $\lambda\in\mathbb{R}$ (note that, a priori, $\tilde\omega_{SF}$ may be different from $\omega_{SF}$ given by Lemma 3.1 of \cite{ST06}).

\par Define $\lambda_{t}:B\times\mathbb{C}\rightarrow B\times\mathbb{C}$ by $\lambda_{t}(s,w)=(s,\sqrt{1+t}\cdot w)$. Then using (\ref{scaling}) we have
\begin{align}\label{2.11}
(1+t)^{-1}\lambda_{t}^{*}p^{*}\tilde{\omega}_{SF}&=(1+t)^{-1}\lambda_{t}^{*}\sqrt{-1}\partial\bar{\partial}\eta\nonumber\\
&=(1+t)^{-1}\sqrt{-1}\partial\bar{\partial}\eta\circ\lambda_{t}\nonumber\\
&=\sqrt{-1}\partial\bar{\partial}\eta\nonumber\\
&=p^{*}\tilde{\omega}_{SF}.
\end{align}
Since $\omega_{0}$ is equivalent to $\tilde{\omega}_{SF}+f^{*}\chi$ on $U$, we know that $\omega_{t}$ is equivalent to $\frac{1}{1+t}\tilde{\omega}_{SF}+f^{*}\chi$. Note that $\lambda_{t}^{*}p^{*}f^{*}\chi=p^{*}f^{*}\chi$, so $\lambda_{t}^{*}p^{*}f^{*}\omega_t$ and hence (by \eqref{C2c}) $\lambda_{t}^{*}p^{*}\omega$ is equivalent to $p^{*}(\tilde{\omega}_{SF}+f^{*}\chi)$ on $B\times\mathbb{C}$, i.e., there exists a positive constant $C$ such that
\begin{equation}\label{C2d}
C^{-1}p^{*}(\tilde{\omega}_{SF}+f^{*}\chi)\leq\lambda_{t}^{*}p^{*}\omega(t)\leq Cp^{*}(\tilde{\omega}_{SF}+f^{*}\chi)
\end{equation}
on $B\times\mathbb{C}$. Notice that $p^{*}(\tilde{\omega}_{SF}+f^{*}\chi)$ is $C^{\infty}$ equivalent to $\omega_E$, where $\omega_{E}$ is the Euclidean metric on $B\times\mathbb{C}$. So for each given $K\subset B\times\mathbb{C}$ there exists a positive constant $C_{K}$ such that
\begin{equation}\label{C2e}
C_{K}^{-1}\omega_{E}\leq\lambda_{t}^{*}p^{*}\omega(t)\leq C_{K}\omega_{E}
\end{equation}
on $K$. Combining with \eqref{C2c} we have
\begin{equation}\label{laplace}
-C_{K}\omega_{E}\leq\sqrt{-1}\partial\bar{\partial}(\varphi\circ p\circ\lambda_t)\leq C_{K}\omega_{E}.
\end{equation}

On the other hand, if we fix a $\beta\in C^{\infty}(B,\mathbb{R})$ such that $\chi=\sqrt{-1}\partial\bar{\partial}\beta$ on $B$, then on $B\times\mathbb{C}$ we have
\begin{align}
\lambda_{t}^{*}p^{*}\omega&=\lambda_{t}^{*}p^{*}(\frac{1}{1+t}\omega_{0}+\frac{t}{1+t}f^{*}\chi+\sqrt{-1}\partial\bar{\partial}\varphi)\nonumber\\
&=\frac{1}{1+t}\lambda_{t}^{*}p^{*}\omega_{0}+\frac{t}{1+t}p^{*}f^{*}\chi+\sqrt{-1}\partial\bar{\partial}(\varphi\circ p\circ\lambda_{t})\nonumber\\
&=\frac{1}{1+t}\lambda_{t}^{*}p^{*}(\tilde{\omega}_{SF}-\sqrt{-1}\partial\bar{\partial}\rho)+p^{*}f^{*}\chi+\sqrt{-1}\partial\bar{\partial}(\varphi\circ p\circ\lambda_{t}-\frac{1}{1+t}\beta\circ p\circ f)\nonumber\\
&=p^{*}(\tilde{\omega}_{SF}+f^{*}\chi)+\sqrt{-1}\partial\bar{\partial}(\varphi\circ p\circ\lambda_{t}-\frac{1}{1+t}\beta\circ p\circ f-\frac{1}{1+t}\rho\circ p\circ\lambda_{t})\nonumber.
\end{align}
Set $v=\varphi\circ p\circ\lambda_{t}-\frac{1}{1+t}\beta\circ f\circ p-\frac{1}{1+t}\rho\circ p\circ\lambda_{t}$ on $B\times\mathbb{C}$, which is uniformly bounded on $B\times\mathbb{C}$.
Now we translate (\ref{lt1}) to $B\times\mathbb{C}$ as
\begin{equation}
(\lambda_{t}^{*}p^{*}\omega)^{2}=e^{\frac{t}{1+t}\varphi\circ p\circ\lambda_{t}}\frac{1}{1+t}\lambda_{t}^{*}p^{*}\Omega=e^{\frac{t}{1+t}\varphi\circ p\circ\lambda_{t}}p^{*}\Omega\nonumber,
\end{equation}
where we have used the fact that $\Omega$ only depends on $s\in B$ since $\sqrt{-1}\partial\bar{\partial}\log\Omega=f^{*}\chi$. Hence we arrive at
\begin{equation}\label{2.12}
\log(p^{*}(\tilde{\omega}_{SF}+f^{*}\chi)+\sqrt{-1}\partial\bar{\partial}v)^{2}=\frac{t}{1+t}\varphi\circ p\circ\lambda_{t}+\log p^{*}\Omega.
\end{equation}

\begin{lem}\label{Ck.}
Given any small $K\subset\subset B\times\mathbb{C}$, there exist two constants $C_{K}>0$ and $\alpha_K\in(0,1)$ such that for all $t\geq1$,
\begin{equation}\label{Ck}
\|\lambda_{t}^{*}p^{*}\omega\|_{C^{1,\alpha_K}(K,\omega_{E})}\leq C_{K}.
\end{equation}
\end{lem}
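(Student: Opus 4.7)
The plan is to view the rescaled equation \eqref{2.12} as a uniformly elliptic complex Monge--Amp\`ere equation for the potential $v$ on small $K\Subset B\times\mathbb{C}$, upgrade the a priori information to $C^{2,\alpha}$ regularity of $v$ via the complex Evans--Krylov theorem, and then run one linear Schauder bootstrap to reach $C^{3,\alpha}$ of $v$, equivalently the $C^{1,\alpha_K}$ bound \eqref{Ck} on $\lambda_t^*p^*\omega=p^*(\tilde\omega_{SF}+f^*\chi)+\sqrt{-1}\partial\bar\partial v$.

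First I would assemble the uniform a priori information already in hand. On a fixed $K\Subset B\times\mathbb{C}$ the two-sided bound \eqref{C2e}, together with the smoothness and $t$-independence of $p^*(\tilde\omega_{SF}+f^*\chi)$, yields a uniform two-sided bound on the complex Hessian $(v_{i\bar j})$; combined with the uniform $L^\infty$ bound on $v$ coming from Lemma \ref{C0'} and the defining formula $v=\varphi\circ p\circ\lambda_t-\frac{1}{1+t}\beta\circ f\circ p-\frac{1}{1+t}\rho\circ p\circ\lambda_t$, Calder\'on--Zygmund gives $v$ bounded in $W^{2,q}_{\mathrm{loc}}\cap C^{1,\alpha}_{\mathrm{loc}}$ uniformly in $t$ for all $q<\infty$ and $\alpha\in(0,1)$. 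A small but essential check is that both correction terms are uniformly bounded in $C^{2,\alpha}_{\mathrm{loc}}$; for $\frac{1}{1+t}\rho\circ p\circ\lambda_t$ this follows from the identity $\partial_w^k(h\circ\lambda_t)=(1+t)^{k/2}(\partial_w^k h)\circ\lambda_t$ applied to the smooth lattice-periodic function $\rho\circ p$, since the pre-factor $(1+t)^{-1}$ exactly absorbs the growth of $\lambda_t^*$ up through two $w$-derivatives.

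With these inputs, I would apply the complex Evans--Krylov theorem to \eqref{2.12} written locally as $\det(g_{i\bar j}+v_{i\bar j})=e^{F_t}\det(g_{E,i\bar j})$ with $F_t$ bounded in $L^\infty$ uniformly in $t$. The uniform ellipticity from \eqref{C2e} plus the uniform $L^\infty$ bound on $F_t$ then give a uniform bound on $v$ in $C^{2,\alpha}_{\mathrm{loc}}(K,\omega_E)$ for some $\alpha=\alpha_K\in(0,1)$. Feeding this back into $F_t=\frac{t}{1+t}\varphi\circ p\circ\lambda_t+\log p^*\Omega$ shows that $F_t$ is uniformly bounded in $C^{1,\alpha}_{\mathrm{loc}}$; differentiating \eqref{2.12} once along a constant-coefficient vector field then produces a linear uniformly elliptic equation for $\partial v$ with $C^{0,\alpha}$ coefficients and $C^{0,\alpha}$ right-hand side, whence linear Schauder upgrades $\partial v$ to $C^{2,\alpha}_{\mathrm{loc}}$, giving the desired $v\in C^{3,\alpha}_{\mathrm{loc}}$ uniformly and hence \eqref{Ck}.

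The main obstacle I anticipate is the jump from the a priori information ($L^\infty$ plus bounded Hessian on $v$) to the $C^{2,\alpha}$ estimate, which cannot be achieved by Calder\'on--Zygmund or by na\"ive differentiation of the equation and genuinely requires the complex Evans--Krylov machinery. A secondary, purely technical concern is to verify that the stretching map $\lambda_t$ does not degrade uniformity of the correction terms past second order; the scaling computation indicated above shows that the $(1+t)^{-1}$ pre-factor is tight but adequate at the $C^{2,\alpha}$ level, which is precisely the level at which the single Schauder bootstrap is carried out.
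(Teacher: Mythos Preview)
Your overall strategy---uniform ellipticity, then complex Evans--Krylov for $C^{2,\alpha}$ on $v$, then one linear Schauder bootstrap to reach $C^{3,\alpha}$---is exactly the paper's approach. There is, however, a small but genuine gap in your Evans--Krylov step: B{\l}ocki's theorem (which the paper invokes) requires the right-hand side to have $f^{1/n}$ Lipschitz, not merely $F_t\in L^\infty$; more generally, Evans--Krylov with variable right-hand side needs at least H\"older regularity of that side. The paper deals with this by first using \eqref{laplace} to see that $\Delta_{\omega_E}(\varphi\circ p\circ\lambda_t)$ is bounded, so standard elliptic estimates give $\varphi\circ p\circ\lambda_t\in C^{1,\alpha}$ directly, and hence $F_t\in C^{1,\alpha}$, \emph{before} invoking Evans--Krylov. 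You actually have the same ingredients (your Calder\'on--Zygmund $C^{1,\alpha}$ bound on $v$ together with your $C^2$ control of the correction terms already yields $\varphi\circ p\circ\lambda_t\in C^{1,\alpha}$), but you only ``feed this back'' \emph{after} applying Evans--Krylov, which is circular as written. Reorder this one step and the argument goes through.

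A minor side remark: your scaling computation shows that $\frac{1}{1+t}\rho\circ p\circ\lambda_t$ is uniformly bounded in $C^2$ (the prefactor $(1+t)^{-1}$ absorbs exactly two $w$-derivatives), not in $C^{2,\alpha}$---the H\"older seminorm of the second $w$-derivative still grows like $(1+t)^{\alpha/2}$. This is harmless, since $C^2\subset C^{1,1}\subset C^{1,\alpha}$ and only $C^{1,\alpha}$ is actually needed, but your claim that the prefactor is ``adequate at the $C^{2,\alpha}$ level'' is off by half a derivative.
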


\begin{proof}
Firstly, we may assume that there exists a compact $L\subset\subset B\times\mathbb{C}$ containing $K$ in its interior and some $K_1\subset\subset X_{reg}$ such that $p\circ\lambda_t(L)\subset K_1$. Then by (\ref{laplace}) we can find a positive constant $C_1=C_1(K_1)$ such that
\begin{equation}
|\Delta_{\omega_E}\varphi\circ p\circ\lambda_t|\le C_1
\end{equation}
on $L$. Combining the fact that $\varphi\circ p\circ\lambda_t$ is uniformly bounded, we obtain by elliptic estimates (see \cite{GT}) that, for any fixed $\alpha\in(0,1)$, we can find a slightly smaller subset $L_1$ of $L$ (still contains $K$ in its interior) and a constant $C_2$ depends on $K_1$ and the uniform bound of $\varphi$ such that
\begin{equation}\label{a.0}
\|\varphi\circ p\circ\lambda_t\|_{C^{1,\alpha}(L_1,\omega_E)}\le C_2,
\end{equation}
Similarly, as we have
\begin{equation}
\Delta_{\omega_E}v=tr_{\omega_E}\lambda_t^*p^*\omega(t)-tr_{\omega_E}(p^*(\tilde{\omega_{SF}}+f^*\chi))
\end{equation}
and hence $\Delta_{\omega_E}v$ is bounded in $L$, there exists a slightly smaller subset $L_2$ (still contains $K$ in its interior) of $L_1$ and a positive constant $C_3$ such that
\begin{equation}
\|v\|_{C^{1,\alpha}(L_2,\omega_E)}\le C_3.
\end{equation}
Now we can apply a complex version of Evans-Krylov theory (see e.g. \cite[Theorem 3.1]{B00}) to (\ref{2.12}) to conclude that
for some $\alpha_0\in(0,1)$ and a slightly smaller subset $L_3$ (still contains $K$ in its interior) of $L_2$,
\begin{equation}
\|v\|_{C^{2,\alpha_0}(L_3,\omega_E)}\le C_4.
\end{equation}
Equivalently,
\begin{equation}\label{a.1}
\|\lambda_t^*p^*\omega\|_{C^{\alpha_0}(L_3,\omega_E)}\le C_4,
\end{equation}
and hence
\begin{equation}\label{a.1.5}
\|(\lambda_t^*p^*\omega)^{-1}\|_{C^{\alpha_0}(L_3,\omega_E)}\le C_5.
\end{equation}
Furthermore, for $i\in\{1,2\}$, differentiating (\ref{2.12}) by $\partial_i$ gives
\begin{equation}\label{a.1.75}
\Delta_{\lambda_t^*p^*\omega}(\partial_iv)=\frac{t}{1+t}\partial_i(\varphi\circ p\circ\lambda_t)+A,
\end{equation}
where $A$ is a term whose $C^{\alpha_0}$-norm with respect to $\omega_E$ is uniformly bounded. Then combining (\ref{a.0}), (\ref{a.1}) and (\ref{a.1.5}), we know that the coefficients and right hand side of (\ref{a.1.75}) are in $C^{\alpha_0}(L_2,\omega_E)$ and we can apply Schauder estimates (see \cite{GT}) to conclude that for some $L_4\subset\subset L_3$ (still contains $K$ in its interior),
\begin{equation}
\|\partial_iv\|_{C^{2,\alpha_0}(L_4,\omega_E)}\le C_6\nonumber,
\end{equation}
which implies
\begin{equation}
\|v\|_{C^{3,\alpha_0}(L_4,\omega_E)}\le C_7\nonumber
\end{equation}
and
\begin{equation}
\|\lambda_t^*p^*\omega\|_{C^{1,\alpha_0}(L_4,\omega_E)}\le C_8\nonumber.
\end{equation}
This lemma is proved.
\end{proof}

\begin{rem}
(1) Note that, as mentioned during the above proof, the H\"{o}lder exponent in Lemma \ref{Ck.} is obtained by applying Evans-Krylov theory and hence depends on the chosen compact subset $K$.\\
(2) One may like to apply bootstrapping to derive $C^k$ estimates for all $k$. However, after having $C^{3,\alpha_0}$ bound on $v$, one may not obtain the $C^{3,\alpha}$ bound on $\varphi\circ p\circ\lambda_{t}$ and hence can't apply bootstrapping directly.
\end{rem}

Consequently, as in \cite{GTZ} (see Lamma 4.5), we have

\begin{prop}
Given any $K'\subset\subset U$, there exist two positive constants $C_{K'}$ and $\alpha_{K'}\in(0,1)$ such that for all $t\ge1$,
\begin{equation}\label{Ck'}
\|\omega\|_{C^{1,\alpha_{K'}}(K',\omega_{0})}\leq C_{K'}.
\end{equation}
\end{prop}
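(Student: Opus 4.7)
The plan is to derive this estimate as a direct consequence of Lemma \ref{Ck.} by undoing the rescaling $\lambda_t$. First I would cover $K'\subset\subset U$ by finitely many open sets on which the local biholomorphism $p:B\times\mathbb{C}\to U$ admits a local inverse; by a partition of unity reduction it suffices to work in one such neighborhood. For each $x\in K'$ I select a preimage $(s_0,w_0)=p^{-1}(x)$ with $w_0$ in a fixed fundamental domain of the lattice $\mathbb{Z}\oplus\mathbb{Z}z(s_0)$, so that $|w_0|$ is uniformly bounded as $x$ varies in $K'$.

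The key observation is that $\lambda_t^{-1}(s_0,w_0)=(s_0,(1+t)^{-1/2}w_0)$ lies in a fixed compact subset $K\subset\subset B\times\mathbb{C}$ for all $t\ge 1$. Lemma \ref{Ck.} then yields a uniform bound $\|\lambda_t^*p^*\omega\|_{C^{1,\alpha_K}(K,\omega_E)}\le C_K$. I would then unravel the pullback: since $\lambda_t(s,w)=(s,(1+t)^{1/2}w)$, each component of $p^*\omega$ at $(s_0,w_0)$ equals the corresponding component of $\lambda_t^*p^*\omega$ at $(s_0,(1+t)^{-1/2}w_0)$ multiplied by $(1+t)^{-1/2}$ or $(1+t)^{-1}$ for each $dw$-index, while differentiation in the fiber direction introduces an additional factor of $(1+t)^{-1/2}$. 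For the H\"older seminorm one observes additionally that $\lambda_t^{-1}$ is a contraction, so the Euclidean distance between the rescaled points does not exceed that between the original points, which in turn can only improve the H\"older quotient. Since every such rescaling factor is bounded by $1$ for $t\ge 0$, the $C^{1,\alpha_K}$ bound on $\lambda_t^*p^*\omega$ transfers to a uniform $C^{1,\alpha_K}$ bound on $p^*\omega$ on a neighborhood of $(s_0,w_0)$ independent of $t$.

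Finally, pushing forward by the local biholomorphism $p$ and using that $\omega_E$ and $\omega_0$ are $C^\infty$-equivalent on compact subsets of $U$, I obtain the desired bound for $\omega$ near $x$. Compactness of $K'$ then delivers the proposition, with $\alpha_{K'}$ taken to be the minimum and $C_{K'}$ the maximum over the finitely many neighborhoods in the cover. The only mildly delicate step is the bookkeeping of the rescaling factors when transferring the H\"older bound from the universal cover downstairs; no new analytic input beyond Lemma \ref{Ck.} is required.
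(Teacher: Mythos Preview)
Your proposal is correct and coincides with the paper's approach: the paper does not spell out a proof but simply refers to \cite[Lemma~4.5]{GTZ}, whose content is exactly the rescaling computation you describe---undoing $\lambda_t$ componentwise, noting that every scaling factor is $\le 1$ for $t\ge 1$, and that $\lambda_t^{-1}$ is a contraction so H\"older quotients only improve. The covering/fundamental-domain reduction and the final passage from $\omega_E$ to $\omega_0$ via the local biholomorphism $p$ are also as intended.
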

Combining with Lemma \ref{conv}, we have
\begin{thm}\label{Cinfty}
For any given compact subset $V$ of $X_{reg}$, there exists a constant $\alpha_{V}\in(0,1)$ such that $\omega(t)\rightarrow f^{*}\chi_{\infty}$ in $C^{1,\alpha_{V}}(V,\omega_0)$-topology as $t\rightarrow\infty$.
\end{thm}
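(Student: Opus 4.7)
The plan is to combine the uniform $C^{1,\alpha_{K'}}$-estimate from the preceding Proposition with the current-level convergence from Lemma \ref{conv}, and conclude by an Arzel\`a--Ascoli plus uniqueness-of-limit argument. The only nontrivial new ingredient is the passage from a family estimate to full (not merely subsequential) convergence.

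First, I would cover the compact set $V\subset X_{reg}$ by finitely many compact subsets $K_j'$ with $K_j'\subset\subset U_j\subset X_{reg}$, where each $U_j=f^{-1}(B_j)$ is of the product-torus form used in the preceding construction (so that the Proposition applies on $U_j$). Slightly enlarging each $K_j'$ inside $U_j$ and invoking the Proposition, one obtains constants $\alpha_{K_j'}\in(0,1)$ and $C_{K_j'}>0$ such that
\[
\|\omega(t)\|_{C^{1,\alpha_{K_j'}}(K_j',\omega_0)}\le C_{K_j'}\qquad\text{for all }t\ge 1.
\]
Choose $\alpha_V\in(0,\min_j \alpha_{K_j'})$. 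Since the embedding $C^{1,\alpha_{K_j'}}\hookrightarrow C^{1,\alpha_V}$ is compact on a precompact domain, the family $\{\omega(t)\}_{t\ge 1}$ is precompact in $C^{1,\alpha_V}(V,\omega_0)$.

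Next I would extract limits along an arbitrary sequence $t_j\to\infty$. By the precompactness just established, there is a subsequence (still denoted $t_j$) and a closed real $(1,1)$-form $\omega_V$ of class $C^{1,\alpha_V}$ on $V$ such that $\omega(t_j)\to\omega_V$ in $C^{1,\alpha_V}(V,\omega_0)$. But Lemma \ref{conv} tells us that the full family converges $\omega(t)\to f^*\chi_\infty$ in the sense of currents on $X$, and $f^*\chi_\infty$ is smooth on $X_{reg}\supset V$. Comparing the two limits as currents on the open set $\mathrm{int}(V)$ forces $\omega_V=f^*\chi_\infty|_V$; thus the subsequential limit is independent of the choice of subsequence. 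A standard contradiction argument (if convergence of the full family failed, extract a bad subsequence, apply the precompactness to get a further subsequence converging in $C^{1,\alpha_V}$ to the unique limit $f^*\chi_\infty|_V$, contradiction) then upgrades subsequential convergence to convergence of the full family.

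I do not expect any serious obstacle: the heavy lifting --- the second-order estimate on $\varphi$ leading to the uniform $C^{1,\alpha_{K'}}$-bound on $\omega(t)$ via Evans--Krylov and a Schauder bootstrap after the scaling $\lambda_t$ --- was already done in Lemma \ref{Ck.} and the subsequent Proposition, and the current-level identification of the limit with $f^*\chi_\infty$ was carried out in Lemma \ref{conv}. The present theorem is therefore a short soft-analysis conclusion combining these two inputs, and would occupy only a few lines.
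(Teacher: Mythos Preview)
Your proposal is correct and follows exactly the approach the paper intends: the paper's proof consists of the single phrase ``Combining with Lemma \ref{conv}, we have'', so the Arzel\`a--Ascoli plus uniqueness-of-limit argument you spell out is precisely the soft-analysis step the authors leave implicit after establishing the uniform $C^{1,\alpha_{K'}}$-bound on $\omega(t)$ in the preceding Proposition.
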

Note that $\chi_{\infty}$ is exactly the unique generalized K\"ahler-Einstein metric on $\Sigma$ (see \cite{ST06}). Hence we have proved part (1) of Theorem \ref{main}.
\par In the remaining part of this section, we shall give a proof of part (2) of Theorem \ref{main}, using the method developed in \cite{TWY}. To this end, we firstly apply a translation to the continuity equation (\ref{lt}) in the following manner. Let $\omega(t)$, $t\in[0,\infty)$, be the unique solution of (\ref{lt}). Set $u=\log(1+t)$ and $\eta(u)=\omega(e^u-1)$ for $t\in[0,\infty)$ Then $\eta=\eta(u)$, $u\in[0,\infty)$, satisfies
\begin{equation}\label{lt.3}
\left\{
\begin{aligned}
e^u\eta(u)&=\omega_{0}-(e^u-1)Ric(\eta(u))\\
\eta(0)&=\omega_{0}.
\end{aligned}
\right.
\end{equation}
and if we set $\eta_u=e^{-u}\omega_0+(1-e^{-u})\chi$ and $\eta(u)=\eta_t+\sqrt{-1}\partial\bar{\partial}\psi(u)$, then (\ref{lt.3}) can be reduced to the following equation of $\psi(u)$.
\begin{equation}\label{lt.4}
\left\{
\begin{aligned}
\psi(u)&=(1-e^{-u})\log\frac{e^u(\eta_u+\sqrt{-1}\partial\bar{\partial}\psi(u))^2}{\Omega}\\
\psi(0)&=0.
\end{aligned}
\right.
\end{equation}
We can easily translate the previous estimates to the current setting as follows.
\begin{lem}\label{est.1}
There exist uniform constants $C>1$ and $\lambda>0$ such that for all $u\ge2$ we have
\begin{itemize}
\item[(1)] $\|\psi(u)\|_{C^0(X)}\le C$;
\item[(2)] $tr_{\eta(u)}f^*\chi\le C$;
\item[(3)] $C^{-1}e^{-C\sigma^{-\lambda}}\eta_u\le\eta(u)\le Ce^{C\sigma^{-\lambda}}\eta_u$;
\item[(4)] $\psi(u)\to\varphi_\infty$ in $L^1(X,\omega_0)$- and $C^{1,\alpha}_{loc}(X_{reg},\omega_0)$-topology, for any $\alpha\in(0,1)$, as $u\to\infty$;
\item[(5)] $tr_{\eta(u)}f^*\chi_{\infty}\le C\sigma^{-\lambda}$.
\end{itemize}
\end{lem}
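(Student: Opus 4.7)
The plan is to observe that parts (1)--(4) follow immediately by translating the estimates already established for $(\omega(t),\varphi(t))$ under the change of variables $t=e^u-1$, while only part (5) requires a genuinely new ingredient.

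First I would record the substitution. With $t=e^u-1$, so that $1+t=e^u$, one has
\[
\omega_t = \tfrac{1}{1+t}\omega_0+\tfrac{t}{1+t}f^*\chi = e^{-u}\omega_0+(1-e^{-u})f^*\chi = \eta_u,
\]
and a direct check shows that $\eta(u):=\omega(e^u-1)$ solves \eqref{lt.3} and $\psi(u):=\varphi(e^u-1)$ solves \eqref{lt.4}. Since $u\ge 2$ corresponds to $t\ge e^2-1\ge 1$, the range of applicability of the earlier estimates is respected. Under this identification, part (1) is precisely Lemma \ref{C0'}, part (2) is Lemma \ref{C2a'}, part (3) is the two-sided estimate \eqref{C2c}, and part (4) is Lemma \ref{conv} together with Theorem \ref{Cinfty}; no further work is needed beyond renaming variables.

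For part (5), I would use the Monge--Amp\`ere equation for $\chi_\infty$ established in Lemma \ref{KE'}: on $\Sigma_{reg}$ we have the pointwise identity $\chi_\infty = Fe^{\varphi_\infty}\chi$ with $|\varphi_\infty|\le C$. The semi-flat weight $F=\Omega/(2\omega_{SF}\wedge f^*\chi)$ admits a standard polynomial pointwise bound $F\le C\sigma^{-\lambda}$ near each singular fiber, derived from the explicit local form of $\omega_{SF}$ and $\Omega$ in \cite{ST06}, with exponent tied to the multiplicities $m_i$. Consequently $\chi_\infty\le C\sigma^{-\lambda}\chi$ on $\Sigma_{reg}$, and pulling back to $X_{reg}$ and invoking part (2) yields
\[
tr_{\eta(u)} f^*\chi_\infty \le C\sigma^{-\lambda}\,tr_{\eta(u)} f^*\chi \le C'\sigma^{-\lambda},
\]
after possibly enlarging $\lambda$.

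The main obstacle is the pointwise inequality $F\le C\sigma^{-\lambda}$. This is strictly sharper than the $L^{1+\epsilon}$ integrability of $F$ invoked elsewhere in the paper, and genuinely requires the local structure of $\omega_{SF}$ and $\Omega$ near each singular fiber $s_i$, together with a careful match between the blowup exponent $1-1/m_i$ of $F$ and the vanishing order of $\sigma$ as a defining function for the discriminant. A Schwarz-lemma approach applied directly to $f:(X,\eta(u))\to(\Sigma_{reg},\chi_\infty)$ with a $\sigma^\lambda$-barrier, in the spirit of Lemma \ref{C2b'}, appears to produce only an exponential bound of the form $Ce^{C\sigma^{-\lambda}}$, which is weaker than what (5) claims; the direct route via \eqref{KE} therefore seems essential to obtain the polynomial bound.
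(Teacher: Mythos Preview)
Your argument is correct and matches the paper's approach exactly: parts (1)--(4) are direct translations of Lemmas~\ref{C0'}, \ref{C2a'}, estimate~\eqref{C2c}, and Lemma~\ref{conv}/Theorem~\ref{Cinfty}, and part (5) is obtained by writing $\chi_\infty=Fe^{\varphi_\infty}\chi$ and combining a pointwise bound on $F$ with part (2). The ``main obstacle'' you flag --- the polynomial bound $F\le C\sigma^{-\lambda}$ --- is not left to you to prove: it is stated separately in the paper as Lemma~\ref{blowup'} (attributed to \cite{ST06,He}), and the exponent matching you worry about is handled simply by enlarging $\lambda$, since $\sigma$ vanishes to finite order along the singular fibers.
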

\begin{proof}
Part (5) is concluded from part (2) and Lemma \ref{blowup'} in the next section.
\end{proof}
Next we prove an analogue of \cite[Lemma 4.6]{TWY}.
\begin{lem}\label{lemma.1}
There exists a positive function $H(u)$ with $H(u)\to0$ as $u\to\infty$ such that
\begin{equation}\label{eq2.2.0}
\sup_{X}e^{-C\sigma^{-\lambda}}|\psi(u)+\partial_u\psi(u)-\varphi_\infty|\le H(u).
\end{equation}
\end{lem}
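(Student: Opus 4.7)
The plan is to derive an elliptic PDE, at each fixed $u$, for the quantity $\theta(u,\cdot):=\psi(u)+\partial_u\psi(u)-\varphi_\infty$, and then apply a weighted maximum principle in the spirit of \cite{TWY}. First I would differentiate the Monge-Amp\`ere equation \eqref{lt.4} in $u$, using $\partial_u\eta(u)=e^{-u}(f^{*}\chi-\omega_0)+\sqrt{-1}\partial\bar\partial(\partial_u\psi)$, to obtain
\[
\partial_u\psi-(1-e^{-u})\Delta_{\eta(u)}\partial_u\psi=\frac{e^{-u}\psi}{1-e^{-u}}+(1-e^{-u})+e^{-u}(1-e^{-u})(tr_{\eta(u)}f^{*}\chi-tr_{\eta(u)}\omega_0).
\]
Combining this with $\Delta_{\eta(u)}\psi=2-tr_{\eta(u)}\eta_u$ (from $\eta(u)=\eta_u+\sqrt{-1}\partial\bar\partial\psi$) and $\Delta_{\eta(u)}\varphi_\infty=tr_{\eta(u)}f^{*}\chi_\infty-tr_{\eta(u)}f^{*}\chi$ (from $f^{*}\chi_\infty=f^{*}\chi+\sqrt{-1}\partial\bar\partial\varphi_\infty$ via Lemma \ref{lemlem}), direct cancellation produces
\[
L\theta:=\theta-(1-e^{-u})\Delta_{\eta(u)}\theta=(Q-\varphi_\infty)+(1-e^{-u})(tr_{\eta(u)}f^{*}\chi_\infty-1),
\]
where $Q:=\psi/(1-e^{-u})=\log(e^{u}\eta(u)^{2}/\Omega)$.

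Second, I would show that the source $R:=L\theta$ satisfies $|R|\le C\sigma^{-\lambda}$ globally and $\|R\|_{C^{0}(V)}\to 0$ for every compact $V\subset X_{reg}$. For $Q-\varphi_\infty$, the uniform bounds $|\psi|,|\varphi_\infty|\le C$ give the global control, while Lemma \ref{est.1}(4) gives $|Q-\varphi_\infty|\to0$ uniformly on each compact $V\subset X_{reg}$. For $(1-e^{-u})(tr_{\eta(u)}f^{*}\chi_\infty-1)$, Lemma \ref{est.1}(5) provides the global bound $C\sigma^{-\lambda}$, and the $C_{loc}^{1,\alpha}(X_{reg},\omega_0)$-convergence $\eta(u)\to f^{*}\chi_\infty$ (coming from Theorem \ref{Cinfty} after translating via $u=\log(1+t)$), together with $\dim_{\mathbb{C}}\Sigma=1$, gives $tr_{\eta(u)}f^{*}\chi_\infty\to 1$ uniformly on each $V\subset\subset X_{reg}$.

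Finally I would apply a weighted maximum principle. A preliminary run of the maximum principle on the elliptic equation for $\partial_u\psi$, combined with Lemma \ref{est.1}(1)--(3), will yield a rough a priori bound $|\theta|\le Ce^{C'\sigma^{-\lambda}}$ for some $C'$. After enlarging the constant in the target weight so that $C>C'$, the quantity $e^{-C\sigma^{-\lambda}}|\theta|$ becomes uniformly small on $\{\sigma<\delta\}$ for $\delta$ small, \emph{independently} of $u$. For any $\varepsilon>0$ I will then fix $\delta=\delta(\varepsilon)>0$ so that this small-$\sigma$ contribution is less than $\varepsilon/2$; on the compact complement $\{\sigma\ge\delta\}\subset X_{reg}$, the uniform decay of $R$ together with a classical elliptic maximum principle for $L$ (matching boundary data at $\{\sigma=\delta\}$ via the rough bound) will produce $|\theta|<\varepsilon/2$ for all $u\ge u_0(\varepsilon)$, yielding the desired $H(u)\to 0$. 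The main obstacle will be this last step: since $\Delta_{\eta(u)}(e^{C\sigma^{-\lambda}})$ can dominate $e^{C\sigma^{-\lambda}}$ near singular fibers, a single global exponential barrier fails to satisfy $L(\cdot)\ge|R|$ everywhere, so one is forced to split $X$ into a near-singular-fiber region absorbed by the target weight and an interior region handled by the uniform convergence of $R$, and to join the two estimates with care.
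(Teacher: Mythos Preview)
Your elliptic identity $L\theta=(Q-\varphi_\infty)+(1-e^{-u})(tr_{\eta}f^{*}\chi_\infty-1)$ is correct, but the proof plan has a genuine gap in the last step and a secondary unjustified claim in the second step.

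\textbf{The main gap.} Your splitting argument cannot close. On the interior region $\{\sigma\ge\delta\}$ you want to apply the maximum principle for $L$ to conclude $|\theta|<\varepsilon/2$. But at the boundary $\{\sigma=\delta\}$ the only available information is the global bound $|\theta|\le C$ (coming from $|\psi|,|\partial_u\psi|,|\varphi_\infty|\le C$), which is a fixed constant, not $o(1)$. Hence the maximum of $\theta$ over $\{\sigma\ge\delta\}$ may well be attained on $\{\sigma=\delta\}$ and equal to this constant, so the interior maximum principle yields nothing better than $|\theta|\le C$. The same obstruction prevents the global maximum principle on $X$ from helping: if the global maximum of $\theta$ sits in $\{\sigma<\delta\}$, you only learn $\theta_{\max}\le C$, and this large value is then the bound you get everywhere, including on $\{\sigma\ge\delta\}$. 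You correctly observe that a global barrier $e^{A\sigma^{-\mu}}$ fails, but the proposed remedy of splitting does not repair this; the two regions do not join because the estimate you feed from the outer region into the inner boundary is not small.

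\textbf{How the paper proceeds instead.} The paper's proof is genuinely different in that it does \emph{not} try to close a spatial maximum principle for $\theta$ at fixed $u$. It first records the elementary weighted convergence $\sup_X e^{-C\sigma^{-\lambda}}|\psi-\varphi_\infty|\to 0$ (this part is just local convergence plus the global bound, exactly your splitting argument applied to $\psi-\varphi_\infty$, where it \emph{does} work because $\psi\to\varphi_\infty$ is already known on compact sets). Then it differentiates \eqref{lt.4} \emph{twice} in $u$ to obtain the one-sided bound $\partial_u^2\psi\le C$, and uses an ODE argument in the $u$-variable at each fixed point $x$: if $|\partial_u\psi(u_0,x)|$ were large, the second-derivative bound forces $\partial_u\psi(\cdot,x)$ to keep a definite sign on a $u$-interval of definite length, so $\psi(\cdot,x)$ would drift by a definite amount, contradicting $|\psi(u,x)-\varphi_\infty(x)|\le h(u)e^{C\sigma(x)^{-\lambda}}\to 0$. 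This is the mechanism you are missing; it uses the evolution in $u$, not an elliptic estimate at fixed $u$.

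\textbf{The secondary issue.} Your claim that $tr_{\eta}f^{*}\chi_\infty\to 1$ on compact sets follows from the $C^{1,\alpha}_{loc}$-convergence $\eta\to f^{*}\chi_\infty$ is not justified as stated: the limit form $f^{*}\chi_\infty$ is degenerate in the fiber direction, so $tr_{\eta}f^{*}\chi_\infty$ involves a $0/0$ ratio (one needs the rate at which the off-diagonal components of $\eta$ vanish relative to the fiber component). In the paper's logic this convergence is a \emph{consequence} of Lemma~\ref{lemma.1} (see \eqref{eq2.2.5}), not an input. One can in fact extract it earlier from the rescaled estimates of Lemma~\ref{Ck.}, but that requires the $\lambda_t$-pullback argument and the identification of the limit of $\lambda_t^{*}p^{*}\omega$, not merely the unrescaled $C^{1,\alpha}$-convergence of $\eta$.
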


\begin{proof}
We begin with the following inequality:
\begin{equation}\label{eq2.2.0.5}
\sup_{X}e^{-C\sigma^{-\lambda}}|\psi(u)-\varphi_\infty|\le h(u),
\end{equation}
where $h(u)$ is a positive function and will go to zero as $u\to\infty$. As we have Lemma \ref{est.1}, this can be checked by the same argument in the proof of \cite[Lemma 4.3]{TWY}. Thus it suffices to show
\begin{equation}\label{eq2.2.0.75}
\sup_{X}e^{-C\sigma^{-\lambda}}|\partial_u\psi(u)|\le H(u).
\end{equation}
To this end, we collect some useful equalities as follows. By taking $u$-derivative of (\ref{lt.4}) and using the easy facts that $\Delta_\eta\psi=2-tr_\eta\eta_u$ and $\partial_u\eta_u=\chi-\eta_u$, we have
\begin{equation}\label{eq2.2.1}
(1-e^{-u})\Delta_\eta(\psi+\partial_u\psi)=-(1-e^{-u})(tr_\eta f^*\chi-1)+\partial_u\psi-e^{-u}\log\frac{e^u\eta(u)^2}{\Omega}
\end{equation}
and
\begin{align}\label{eq2.2.2}
&\Delta_\eta\left((1-e^{-u})\partial_u\partial_u\psi+2e^{-u}\partial_u\psi-(1-3e^{-u})\psi\right)\\
&=\partial_u\partial_u\psi+e^{-u}\log\frac{e^u\eta(u)^2}{\Omega}+(1-3e^{-u})tr_{\eta}f^*\chi+4e^{-u}-2+(1-e^{-u})|\partial_u\eta|^2_\eta.
\end{align}
By using parts (1) and (2) of Lemma \ref{est.1} and the maximum principle, one can conclude that there exists a positive constant $C$ such that for all $u\ge2$,
\begin{equation}\label{eq2.2.3}
|\partial_u\psi|_{C^0(X)}\le C
\end{equation}
from (\ref{eq2.2.1}) and
\begin{equation}\label{eq2.2.4}
\partial_u\partial_u\psi\le C
\end{equation}
from (\ref{eq2.2.2}) and (\ref{eq2.2.3}). With (\ref{eq2.2.0.5}), (\ref{eq2.2.3}) and (\ref{eq2.2.4}), we can apply the arguments in Lemma 4.6 of \cite{TWY} to obtain the desired conclusion (\ref{eq2.2.0.75}). The proof is now completed.
\end{proof}

With Lemma \ref{lemma.1} and equation (\ref{eq2.2.1}), one can apply a maximum principle argument (see \cite[Lemma 4.7]{TWY} for details) to conclude that there exist two positive constants $C$ and $\lambda$ such that for all $u\ge2$,
\begin{equation}\label{eq2.2.5}
\sup_Xe^{-C\sigma^{-\lambda}}(tr_\eta f^*\chi_\infty-1)\le C\sqrt{H(u)},
\end{equation}
where $H(u)$ is the function satisfying Lemma \ref{lemma.1}.
\par Now we shall give a proof of part (2) of Theorem \ref{main}, which is equivalent to the following

\begin{prop}\label{fiber conv}
For any $\bar{s}\in\Sigma_{reg}$, $e^u\eta(u)|_{X_{\bar{s}}}$ converges in $C^\infty(X_{\bar{s}},\omega_0|_{X_{\bar{s}}})$-topology to the unique flat metric in class $[\omega_0|_{X_{\bar{s}}}]$ as $u\rightarrow\infty$.
\end{prop}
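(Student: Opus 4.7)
The plan is to adapt the strategy of Tosatti--Weinkove--Yang \cite{TWY} from the K\"ahler-Ricci flow to the continuity method. Work in the local picture around $\bar s$: pick a small disk $B\subset\Sigma_{reg}$, use the covering $p:B\times\mathbb{C}\to U=f^{-1}(B)$ and the fiber dilation $\lambda_t(s,w)=(s,\sqrt{1+t}\,w)$ with $t=e^u-1$. By Lemma~\ref{Ck.}, $\lambda_t^*p^*\omega(t)$ is uniformly $C^{1,\alpha}$-bounded and equivalent to the Euclidean metric on compact subsets of $B\times\mathbb{C}$. Because $[f^*\chi]|_{X_{\bar s}}=0$ and $[\eta(u)]=e^{-u}[\omega_0]+(1-e^{-u})[f^*\chi]$, one has $[e^u\eta(u)|_{X_{\bar s}}]=[\omega_0|_{X_{\bar s}}]$, whose unique flat representative is the restriction of the semi-flat form $\omega_{SF}|_{X_{\bar s}}$ from \cite[Lemma~3.1]{ST06}.

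For $C^0$ convergence, I would combine the refined trace estimate \eqref{eq2.2.5}, $tr_{\eta(u)}f^*\chi_\infty\to 1$ uniformly on compacts of $X_{reg}$, with the volume asymptotic $\eta(u)^2\sim e^{-u}\Omega$ coming from \eqref{lt.4} and Lemma~\ref{est.1}(1). The identity $2\eta\wedge f^*\chi_\infty=(tr_\eta f^*\chi_\infty)\eta^2$ then pins down both the horizontal part of $\eta$ (forced towards $f^*\chi_\infty$) and the vertical part of $e^u\eta$ (forced towards $\omega_{SF}|_{X_{\bar s}}$) in $L^\infty_{loc}(X_{reg})$, yielding uniform fiber convergence $e^u\eta(u)|_{X_{\bar s}}\to\omega_{SF}|_{X_{\bar s}}$.

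For the $C^\infty$ upgrade, the key observation is that any uniform $C^{k,\alpha}$-bound on $\lambda_t^*p^*\omega(t)$ on a compact set of $B\times\mathbb{C}$ translates, via the chain rule for $\lambda_t^{-1}$, into decay of $k$-th $w$-derivatives of $(1+t)\omega(t)|_{X_{\bar s}}$ at rate $(1+t)^{-k/2}$ on the universal cover $\mathbb{C}$. Hence every $C^k(X_{\bar s},\omega_0|_{X_{\bar s}})$-norm of $e^u\eta(u)|_{X_{\bar s}}-\omega_{SF}|_{X_{\bar s}}$ tends to zero once the higher-order regularity of the rescaled metric is in hand. Uniqueness of the flat representative in $[\omega_0|_{X_{\bar s}}]$ then removes the need to pass to subsequences, and the proposition follows.

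The main obstacle is producing uniform $C^{k,\alpha}$-bounds on $\lambda_t^*p^*\omega(t)$ for every $k$: the remark after Lemma~\ref{Ck.} explicitly warns that the naive Evans--Krylov/Schauder bootstrap of \eqref{2.12} breaks past $C^{3,\alpha_0}$, because the correction $\frac{1}{1+t}\rho\circ p\circ\lambda_t$ in the definition of $v$ has $k$-th fiber-direction derivatives that grow like $(1+t)^{(k-2)/2}$ for $k\geq 3$. I would bypass this by adapting the Calabi-type third-order argument of \cite[Section~4]{TWY}: using the Ricci bound \eqref{Ric bound} together with the volume normalization encoded in \eqref{lt.4}, derive a differential inequality for a Calabi-type energy density $|\nabla g|^2_g$ of the rescaled metric $g=\lambda_t^*p^*\omega(t)$ which, in view of the $C^0$-equivalence \eqref{C2e}, produces a uniform $C^{2,\alpha}$-bound on $g$ independent of the growing correction terms. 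A Schauder bootstrap restricted to fiber-direction linearizations of \eqref{2.12} then produces uniform $C^{k,\alpha}$-bounds in the $w$-variables for every $k$, completing the proof.
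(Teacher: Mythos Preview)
Your $C^0$-convergence outline is essentially the paper's argument: define $g=\dfrac{e^u\eta|_{X_s}}{\omega_{SF}|_{X_s}}$, rewrite it as $(tr_\eta f^*\chi_\infty)\cdot e^{\frac{e^u}{e^u-1}\psi-\varphi_\infty}$ via \eqref{lt.4} and the equation $\chi_\infty=Fe^{\varphi_\infty}\chi$, and use \eqref{eq2.2.5} together with $\psi\to\varphi_\infty$. Note, however, that \eqref{eq2.2.5} only gives $tr_\eta f^*\chi_\infty\le 1+o(1)$; the matching lower bound comes from the fiberwise cohomology identity $\int_{X_s}g\,\omega_{SF}=\int_{X_s}\omega_0$ combined with a gradient bound on $g$, which is the content of \cite[Lemma~2.4]{TWY}. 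So the $C^k$ fiber bounds are needed already for the $C^0$ step, not only for the upgrade.

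The genuine gap is in your $C^k$ argument. You correctly recall that the $\lambda_t$-rescaled equation \eqref{2.12} cannot be bootstrapped past $C^{3,\alpha_0}$ in $v$, and you propose to recover higher fiber regularity via a Calabi estimate and a ``fiber-direction'' Schauder iteration. But under $\lambda_t$ the inhomogeneous term in the Ricci identity is $t^{-1}\lambda_t^*p^*\omega_0$, whose $w$-derivatives of order $\ge 1$ grow like $(1+t)^{1/2}$ or worse; equivalently, in \eqref{2.12} the term $\frac{t}{1+t}\partial_w^2(\varphi\circ p\circ\lambda_t)$ on the right-hand side of the twice-$\partial_w$-differentiated equation is not uniformly $C^\alpha$. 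So neither the Calabi/PSS scheme nor the Schauder bootstrap closes beyond the $C^{1,\alpha}$ already recorded in Lemma~\ref{Ck.}, even when you restrict attention to $w$-derivatives.

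The paper avoids this by switching to a \emph{different} rescaling, following \cite{ToZy}: on a product chart one sets $F_u(s,w)=(se^{-u/2},w)$, i.e.\ one shrinks the \emph{base} rather than stretching the fiber. Then $F_u^*\omega_0$ lands in a fixed compact set of the chart and is uniformly bounded in $C^\infty$, and the Ricci identity reads
\[
Ric(e^uF_u^*\eta)=\tfrac{1}{e^u-1}F_u^*\omega_0-\tfrac{1}{e^u-1}(e^uF_u^*\eta),
\]
with right-hand side uniformly bounded together with all derivatives. Calabi's $C^3$ estimate now gives a uniform $C^1$ bound on $e^uF_u^*\eta$, and the elliptic system $\Delta_g g_{i\bar j}=g^{\bar q a}g^{\bar b p}\partial_i g_{a\bar b}\partial_{\bar j}g_{p\bar q}-\tfrac{1}{e^u-1}g_{i\bar j}+\tfrac{1}{e^u-1}(F_u^*\omega_0)_{i\bar j}$ bootstraps to all orders. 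Restricting to $\{0\}\times B_1$ yields the uniform $C^k$ bounds \eqref{Ck1} on $e^u\eta|_{X_{\bar s}}$, after which your $C^0$ argument and interpolation finish the proof.
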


\begin{proof}
By part (3) of Lemma \ref{est.1}, there exists a constant $C>1$ such that for all $u\ge2$,
\begin{equation}\label{0.1}
C^{-1}\omega_{0}|_{X_{\bar{s}}}\le e^u\eta(u)|_{X_{\bar{s}}}\le C\omega_{0}|_{X_{\bar{s}}}.
\end{equation}
Adapting the arguments in the proof of \cite[Theorem 1.1]{ToZy}, we can find constants $C_k$ for all $k\in\mathbb{N}$ such that for all $u\ge2$,
\begin{equation}\label{Ck1}
\|e^u\eta(u)|_{X_{\bar{s}}}\|_{C^k(X_{\bar{s}},\omega_{0}|_{X_{\bar{s}}})}\leq C_k.
\end{equation}
For the sake of convenience, we sketch a proof here by following \cite{ToZy}. For any given $x\in X_{\bar{s}}$, we fix a small chart $(U,(s,w))$ in $X$ centered at $x$ such that $f$ in this coordinate is given by $f(s,w)=s$. Without loss of any generality, we assume $U=\{(s,w)\in\mathbb{C}^2||s|<1,|w|<1\}$. Let $B_{r}(0)$ be the standard disc in $\mathbb{C}$ centered at $0\in\mathbb{C}$ with radius $r>0$. Define the maps $F_{u}:B_{e^{\frac{u}{2}}}\times B_{1}\to U$ for $u\ge0$ as follows:
\begin{equation}
F_u(s,w)=(se^{-\frac{u}{2}},w)\nonumber.
\end{equation}
Then $e^uF^*_u\eta$ satisfies
\begin{equation}\label{b.1}
Ric(e^uF^*_u\eta)=\frac{1}{e^u-1}F^*_u\omega_0-\frac{1}{e^u-1}(e^uF^*_u\eta)
\end{equation}
on $B_{e^{\frac{u}{2}}}\times B_{1}$ and for any given $M\subset\subset B_{e^{\frac{u}{2}}}\times B_{1}$ one can find a constant $C_{M}>1$ depending only on $M$ such that
\begin{equation}\label{b.2}
C_{M}^{-1}\omega_E\le e^uF^*_u\eta\le C_{M}\omega_E
\end{equation}
on $M$, where $\omega_E$ is the Euclidean metric on $\mathbb{C}^2$. Note that (\ref{b.1}) implies a uniform lower bound for Ricci curvature of $e^uF^*_u\eta$ and both $\frac{1}{e^u-1}F^*_u\omega_0$ and its covariant derivative with respect to $\omega_E$ are uniformly bounded for all $u\ge2$. Thus one can modify Calabi's $C^3$ estimate (see e.g. \cite{PSS,ShWe,HeTo}) to obtain the uniform $C^1$ estimate of $e^uF^*_u\eta$ on a slightly smaller $M_1\subset\subset M$. Moreover, by equation (\ref{b.1}), the components of $e^uF^*_u\eta$ (resp. $F^*_u\omega_0$), say $g_{i\bar{j}}$ (resp. $(g_0)_{i\bar{j}}$), satisfy
\begin{equation}
\Delta_{g}(g_{i\bar{j}})=g^{\bar{q}a}g^{\bar{b}p}\partial_ig_{a\bar{b}}\partial_{\bar{j}}g_{p\bar{q}}-\frac{1}{e^{u}-1}g_{i\bar{j}}+\frac{1}{e^{u}-1}(g_0)_{i\bar{j}}\nonumber.
\end{equation}
where $g$ is the metric associated to $e^uF^*_u\eta$. Thus, a standard bootstrapping argument (see e.g. \cite{HeTo}) will give the higher order estimates for $e^uF^*_u\eta$ and then, by restricting to $\{0\}\times B_1$ and using a finite cover by local charts of $X_{\bar{s}}$, the desired estimate (\ref{Ck1}) follows.
\par On the other hand, for any fixed $\bar{s}\in V'\subset\subset \Sigma_{reg}$ if we define a function $g$ on $f^{-1}(V')\times[2,\infty)$ by
\begin{equation}
g=\frac{e^u\eta|_{X_s}}{\omega_{SF}|_{X_s}},
\end{equation}
then
\begin{equation}
e^u\eta(u)|_{X_s}=g\cdot\omega_{SF}|_{X_s}.
\end{equation}
and
\begin{align}
g&=\frac{e^u\eta|_{X_s}}{\omega_{SF}|_{X_s}}\nonumber\\
&=\frac{e^u\eta\wedge f^*\chi_\infty}{\omega_{SF}\wedge f^*\chi_\infty}\nonumber\\
&=(tr_\eta f^*\chi_\infty)\frac{e^u\eta^2}{2\omega_{SF}\wedge f^*\chi_\infty}\nonumber\\
&=(tr_\eta f^*\chi_\infty)\frac{e^{\frac{e^u}{e^u-1}\psi}}{e^{\varphi_\infty}}\nonumber.
\end{align}
Since \eqref{Ck1} holds uniformly when $s$ varies in any compact subset of $\Sigma_{reg}$, we can make use of the estimates obtained above and \cite[Lemma 2.4]{TWY} to conclude that
\begin{equation}
\sup_{f^{-1}(V')}|g-1|\to0
\end{equation}
and hence
\begin{equation}\label{eq2.2.6}
\|e^u\eta|_{X_{\bar{s}}}-\omega_{SF}|_{X_{\bar{s}}}\|_{C^0(X_{\bar{s}},\omega_0|_{X_{\bar{s}}})}\to0
\end{equation}
as $u\to\infty$. Combining (\ref{Ck1}), we have proved Proposition \ref{fiber conv}, i.e., part (2) of Theorem \ref{main}.
\end{proof}

\begin{rem}
Note that the convergence (\ref{eq2.2.6}) holds uniformly as $s$ varies in any given compact subset of $\Sigma_{reg}$. If we set $\tilde{\eta}_u=e^{-u}\omega_{SF}+(1-e^{-u})f^*\chi_\infty$, then as in \cite{TWY} one easily obtains that for any given $V\subset\subset X_{reg}$,
\begin{equation}
\|\eta(u)-\tilde{\eta}_u\|_{C^0(V,\omega_0)}\to0\nonumber
\end{equation}
and hence
\begin{equation}
\|\eta(u)-f^*\chi_\infty\|_{C^0(V,\omega_0)}\to0\nonumber
\end{equation}
as $u\to\infty$.
\end{rem}

\section{Diameter bounds and Gromov-Hausdorff convergence}\label{geomconv}
In this section we first recall the following lemma, which is proved in \cite[Lemma 3.4]{ST06} and \cite[Section 3.3]{He}.
Recall that $\omega_{SF}$ is the semi-flat $(1,1)$-form on $X_{reg}$ defined by \cite[Lemma 3.1]{ST06} and $F=\frac{\Omega}{2\omega_{SF}\wedge f^{*}\chi}$.

\begin{lem}[\cite{ST06,He}]\label{blowup'}
Let $\Delta_{r}$ be a disk centered at $0$ of small diameter $r$ with respect to $\chi$ such that all fibers $X_{s}$, $s\neq0$, are nonsingular. Let $\Delta_{r}^{*}=\Delta_{r}\setminus\{0\}$. Then there exist two constants $C>1$ and $0<\beta<1$ such that for small $r$,
\begin{equation}\label{blowup}
F(s)\leq\frac{C}{|s|^{2\beta}}
\end{equation}
for all $s\in\Delta_{r}^{*}$.
\end{lem}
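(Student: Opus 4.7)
The plan is to localize near the singular fiber $X_0=f^{-1}(0)$, use the explicit local description of $\omega_{SF}$ in terms of the period $\tau(s)$ of the elliptic fibers to reduce the estimate to a growth bound on $\mathrm{Im}\,\tau(s)$, and then invoke the classical monodromy analysis of elliptic degenerations.

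First I would fix local product coordinates $(s,w)$ on a neighborhood of $X_0$ in $X_{reg}$ (passing to the universal cover of $\Delta_r^*$ if necessary) so that $f(s,w)=s$ and each nearby fiber is realized as $\mathbb{C}/(\mathbb{Z}+\mathbb{Z}\tau(s))$ with $w$ the flat coordinate. By the construction in \cite[Lemma~3.1]{ST06}, $\omega_{SF}|_{X_s}$ is the unique flat K\"ahler metric in the class $[\omega_0|_{X_s}]$, whose total mass $V_0=\int_{X_s}\omega_0|_{X_s}$ is a topological constant independent of $s$. Since the Euclidean form $\frac{\sqrt{-1}}{2}\,dw\wedge d\bar w$ assigns area $\mathrm{Im}\,\tau(s)$ to the fundamental parallelogram of the lattice, this forces
\begin{equation*}
\omega_{SF}|_{X_s}=\frac{V_0}{\mathrm{Im}\,\tau(s)}\cdot\frac{\sqrt{-1}}{2}\,dw\wedge d\bar w,
\end{equation*}
and therefore $\omega_{SF}\wedge f^*\chi$ carries an extra factor $(\mathrm{Im}\,\tau(s))^{-1}$ compared with the smooth background volume form $\Omega$. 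If $X_0=m_iF_i$ is a multiple fiber, one further absorbs the ramification of $f$ via a local base change $s=\tilde s^{m_i}$, which introduces a Jacobian factor of size $|\tilde s|^{2(m_i-1)}=|s|^{2(m_i-1)/m_i}$, and a direct computation of $\Omega/(2\omega_{SF}\wedge f^*\chi)$ yields
\begin{equation*}
F(s)\leq C\cdot\frac{\mathrm{Im}\,\tau(s)}{|s|^{2(m_i-1)/m_i}}.
\end{equation*}

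Next I would bound $\mathrm{Im}\,\tau(s)$ itself. By Kodaira's classification of singular fibers of elliptic surfaces, on the cover the local monodromy around $s=0$ is either of finite order---in which case $\tau(s)$ extends to a bounded holomorphic function across the origin---or contains a nontrivial unipotent part, in which case the Picard--Lefschetz formula (equivalently, Schmid's nilpotent orbit theorem) gives $\mathrm{Im}\,\tau(s)\leq C(1+\log|s|^{-1})$. Either way $\mathrm{Im}\,\tau(s)\leq C_\delta|s|^{-\delta}$ for every $\delta>0$, and combining with the previous display produces $F(s)\leq C|s|^{-2\beta}$ with $\beta=\frac{m_i-1}{m_i}+\delta<1$ for $\delta$ small enough. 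The main technical obstacle will be pinning down the identification of $\omega_{SF}|_{X_s}$ with the period form uniformly in $s$, and tracking how the Jacobian of the base change interacts with both $\Omega$ and $\omega_{SF}$ when ramification at a multiple fiber coexists with the logarithmic blow-up of $\tau$ at a fiber with infinite monodromy; once this local model is fixed, the rest is simply a matter of combining the two elementary growth bounds above.
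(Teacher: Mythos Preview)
The paper itself does not prove this lemma; it is quoted from \cite[Lemma~3.4]{ST06} and \cite[Section~3.3]{He}, and your outline is indeed the skeleton of those arguments. However, there is a real gap in the step you call ``a direct computation of $\Omega/(2\omega_{SF}\wedge f^*\chi)$.''

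Your displayed bound $F(s)\le C\,\mathrm{Im}\,\tau(s)\cdot|s|^{-2(m_i-1)/m_i}$ assumes that, in the flat fiber coordinates $(s,w)$ on the cover of $f^{-1}(\Delta_r^*)$, the smooth volume form $\Omega$ is uniformly comparable to the coordinate volume form $|ds|^2\wedge|dw|^2$. This is true when the singular fiber is of type $I_n$ or $mI_n$ (the cases handled in \cite{ST06}), but it fails for the remaining Kodaira types $I_n^*$, II, III, IV, II$^*$, III$^*$, IV$^*$. The point is that $ds\wedge dw$ is a holomorphic frame for $K_X$ only over $\Delta_r^*$; across $X_0$ it differs from a genuine local frame of $K_X$ by a factor $s^a$ with $a>0$ determined by the fiber type via the canonical bundle formula (equivalently, by the local contribution to $f_*\omega_{X/\Sigma}$). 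Consequently $\Omega/(|ds|^2\wedge|dw|^2)\sim|s|^{-2a}$, and one actually gets $F(s)\sim\mathrm{Im}\,\tau(s)\cdot|s|^{-2a}$. For a non-multiple fiber with finite-order monodromy your argument would give $F\le C\,\mathrm{Im}\,\tau\le C$, whereas in fact $F\sim|s|^{-2a}$ with $a$ reaching $5/6$ for type~II$^*$; this is exactly the source of the constant $5/6$ in the Remark following the lemma, and it is the content of Hein's computation in \cite{He}. So your sketch covers the $mI_n$ fibers correctly but is missing the key ingredient for all other singular fiber types.
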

\begin{rem}
Combining the results in \cite{ST06} and \cite{He}, $\beta=\max\{\frac{5}{6},1-\frac{1}{2m_{1}},\ldots,1-\frac{1}{2m_{k}}\}$ will satisfy Lemma \ref{blowup'}. Here $m_{i}$'s are the multiplicities of singular fibers as in Section 1.
\end{rem}
Consequently, we have
\begin{prop}\label{diam1.1'}
There exists a constant $C>1$ such that for small $r$
\begin{equation}\label{diam1.1}
diam(\Delta_{r}^{*},\chi_{\infty})\leq Cr^{1-\beta}.
\end{equation}
\end{prop}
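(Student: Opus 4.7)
The plan is to translate the diameter bound into an explicit length estimate using the generalized K\"ahler-Einstein equation satisfied by $\chi_\infty$. Since $\varphi_\infty\in L^\infty(\Sigma)$ by Lemma \ref{KE'} and the discussion preceding it, equation (\ref{KE}) gives on $\Sigma_{reg}$ the pointwise bound
\[
\chi_\infty = Fe^{\varphi_\infty}\chi \le CF\chi,
\]
which combined with Lemma \ref{blowup'} yields, in a local holomorphic coordinate $s$ on $\Delta_r$,
\[
\chi_\infty \le \frac{C}{|s|^{2\beta}}\,\sqrt{-1}\,ds\wedge d\bar s
\]
on $\Delta_r^*$, possibly after enlarging $C$. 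Because $\dim_{\mathbb{C}}\Sigma = 1$, this $(1,1)$-form inequality is equivalent to a pointwise upper bound on the associated Riemannian metric, so the problem reduces to a length computation on a punctured Euclidean disk with conformal factor $|s|^{-\beta}$.

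Next, I would estimate $d_{\chi_\infty}(s_1, s_2)$ for arbitrary $s_1, s_2\in\Delta_r^*$ by a ``radial--arc--radial'' path. Writing $s_j = \rho_j e^{i\theta_j}$ and fixing a small $\epsilon>0$, I concatenate the radial segment from $s_1$ to $\epsilon e^{i\theta_1}$, the shorter circular arc of radius $\epsilon$ from $\epsilon e^{i\theta_1}$ to $\epsilon e^{i\theta_2}$, and the radial segment from $\epsilon e^{i\theta_2}$ to $s_2$. Using the density bound above, each radial segment has $\chi_\infty$-length bounded by
\[
\int_\epsilon^{\rho_j}\frac{C}{\tau^\beta}\,d\tau \le \frac{C}{1-\beta}\, r^{1-\beta},
\]
while the arc has $\chi_\infty$-length at most $\pi C\epsilon^{1-\beta}$. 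Since $0<\beta<1$ by Lemma \ref{blowup'}, the arc contribution tends to $0$ as $\epsilon\to 0$, giving $d_{\chi_\infty}(s_1, s_2)\le \tfrac{2C}{1-\beta}\, r^{1-\beta}$ and hence (\ref{diam1.1}).

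The only mildly delicate point is that $\chi_\infty$ is an incomplete smooth metric on $\Delta_r^*$ and the arcs used in the construction pass arbitrarily close to the puncture; but the integrability $\int_0\tau^{-\beta}\,d\tau<\infty$, equivalently the \emph{strict} inequality $\beta<1$ of Lemma \ref{blowup'}, is exactly what makes both the radial integrals converge and the arc lengths vanish in the limit. This sharp exponent is the crucial input, and once it is in hand the argument is essentially a one-page conformal-metric computation with no further obstacles.
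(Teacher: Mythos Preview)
Your proof is correct and follows essentially the same approach as the paper: both use the pointwise bound $\chi_\infty = Fe^{\varphi_\infty}\chi \le CF\chi$ coming from equation (\ref{KE}) and the boundedness of $\varphi_\infty$, combine it with Lemma \ref{blowup'}, and then estimate distances via explicit polar-coordinate paths. The only cosmetic difference is the choice of path: the paper connects $p_1$ and $p_2$ by a single radial segment out to the larger radius $\rho_2$ followed by an arc at radius $\rho_2$, whereas you go radially inward to a small radius $\epsilon$, take the arc there, go radially out, and then let $\epsilon\to 0$; the paper's choice is marginally more direct since it avoids the limiting argument, but both rely on the same crucial input $\beta<1$.
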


\begin{proof}
Without loss of any generality, we assume that $\Delta_{r}$ is the standard disc in $\mathbb{C}$ and $\chi\leq\omega_{E}$ on $\Delta_{r}$, where $\omega_{E}$ is the standard flat metric on $\Delta_{r}$. For any fixed two points $p_{1},p_{2}$ in $\Delta_{r}^{*}$, we express them in polar coordinates by $p_{1}=\rho_{1}e^{\sqrt{-1}\theta_{1}}$, $p_{2}=\rho_{2}e^{\sqrt{-1}\theta_{2}}$, where $\rho_{1}$, $\rho_{1}\in(0,r)$ and $\theta_{1}$, $\theta_{2}\in[0,2\pi)$. Without loss of any generality, we assume $\rho_{1}<\rho_{2}$, $\theta_{1}<\theta_{2}$. Set $p_{3}=\rho_{2}e^{\sqrt{-1}\theta_{1}}$. Then
\begin{equation}
d_{\chi_{\infty}}(p_{1},p_{2})\leq d_{\chi_{\infty}}(p_{1},p_{3})+d_{\chi_{\infty}}(p_{3},p_{2})\nonumber.
\end{equation}
We connect $p_{1}$ and $p_{3}$ by $\gamma_{1}(t)=te^{\sqrt{-1}\theta_{1}}$, $t\in[\rho_{1},\rho_{2}]$ and connect $p_{3}$ and $p_{2}$ by $\gamma_{2}(s)=\rho_{2}e^{\sqrt{-1}s}$, $s\in[\theta_{1},\theta_{2}]$. Then we have
\begin{equation}
L_{\chi_{\infty}}(\gamma_{1})\leq C\int_{\rho_{1}}^{\rho_{2}}\sqrt{F(\gamma_{1}(t))}dt\nonumber
\end{equation}
and
\begin{equation}
L_{\chi_{\infty}}(\gamma_{2})\leq C\rho_{2}\int_{\theta_{1}}^{\theta_{2}}\sqrt{F(\gamma_{2}(s))}ds\nonumber.
\end{equation}
Now by Lemma \ref{blowup'}, we have
\begin{equation}
F(\gamma_{1}(t))\leq C|\gamma_{1}(t)|^{-2\beta}=C|t|^{-2\beta}\nonumber.
\end{equation}
Therefore
\begin{equation}
L_{\chi_{\infty}}(\gamma_{1})\leq C\int_{\rho_{1}}^{\rho_{2}}|t|^{-\beta}dt\leq Cr^{1-\beta}\nonumber.
\end{equation}
On the other hand,
\begin{equation}
L_{\chi_{\infty}}(\gamma_{2})\leq C\rho_{2}^{1-\beta}\int_{\theta_{1}}^{\theta_{2}}ds\leq C r^{1-\beta}\nonumber.
\end{equation}
Thus the (\ref{diam1.1'}) is proved.
\end{proof}

Now it can be seen that the metric completion $(X_{\infty},d_{\infty})$ of $(\Sigma_{reg},\chi_{\infty})$ is compact and $X_{\infty}$ is homeomorphic to $\Sigma$. In fact, for any $s\in\Sigma_{reg}$ and $s_{i}$, define $d_{\infty}(s,s_{i})=\lim_{l\rightarrow\infty}d_{\infty}(s,r_{l}^{i})$, where $\{r_{l}^{i}\}$ is a sequence contained in $\Sigma_{reg}$ and converges to $s_{i}$. Note that Proposition \ref{diam1.1'} implies that this is well-defined. One can define $d_{\infty}(s_{i},s_{j})$ similarly. In particular, we have proved part (3) of Theorem \ref{main}, i.e.,
\begin{prop}\label{completion}
$(X_{\infty},d_{\infty})$ is a compact length metric space and $X_{\infty}$ is homeomorphic to $\Sigma$ as a projective variety.
\end{prop}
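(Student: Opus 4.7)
The plan is to use Proposition \ref{diam1.1'} as the decisive ingredient and to proceed in three stages: identify the completion set-theoretically with $\Sigma$, extend the distance function to the added points, and verify the homeomorphism with $\Sigma$ together with the length space property.

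For the first stage, given a Cauchy sequence $\{p_n\}$ in $(\Sigma_{reg}, d_{\chi_\infty})$, compactness of $\Sigma$ provides a subsequence converging in the complex topology to some $s \in \Sigma$. If $s \in \Sigma_{reg}$, local smoothness of $\chi_\infty$ forces the whole sequence to converge to $s$ in $d_{\chi_\infty}$. If $s = s_i$, then eventually $p_n$ enters every $\Delta_r^*(s_i)$, and any two such sequences are equivalent in the completion since their mutual $d_{\chi_\infty}$-distance is bounded by $\operatorname{diam}(\Delta_r^*(s_i), \chi_\infty) \leq C r^{1-\beta} \to 0$. Hence each $s_i$ contributes exactly one point to the completion, so $X_\infty$ is naturally in bijection with $\Sigma$.

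For the second stage, the definitions $d_\infty(s, s_i) := \lim_l d_{\chi_\infty}(s, r_l^i)$ and $d_\infty(s_i, s_j) := \lim_l d_{\chi_\infty}(r_l^i, r_l^j)$ are well-defined and independent of the chosen sequences thanks to the triangle inequality combined with the diameter bound; symmetry and triangle inequality pass to the limit. Nondegeneracy is only nontrivial for pairs $(s_i, s_j)$ with $i \neq j$: choose $r$ small enough that $\Delta_r(s_i)$ and $\Delta_r(s_j)$ are disjoint, and observe that any curve in $\Sigma_{reg}$ connecting their punctured neighborhoods must traverse a compact region of $\Sigma_{reg}$ on which $\chi_\infty$ is smooth and bounded below, yielding a uniform positive lower bound for $d_\infty(s_i, s_j)$.

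For the third stage, define $\iota : \Sigma \to X_\infty$ to be the identity on $\Sigma_{reg}$ and to send $s_i$ to the corresponding added point. Continuity of $\iota$ at $s_i$ is immediate from Proposition \ref{diam1.1'}. Continuity of $\iota^{-1}$ at $s_i$ amounts to showing that every $d_\infty$-ball around $s_i$ contains a $\Sigma$-neighborhood of $s_i$; picking $\Delta_r(s_i)$ with $C r^{1-\beta} < \epsilon$ works by the same diameter estimate, while on $\Sigma_{reg}$ the two topologies agree by smoothness of $\chi_\infty$. Thus $\iota$ is a homeomorphism and compactness of $X_\infty$ follows from compactness of $\Sigma$. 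The length space property descends from $\chi_\infty$ being Riemannian on $\Sigma_{reg}$: distances involving $s_i$ are realized up to arbitrarily small error by concatenating a $\chi_\infty$-length-minimizing path in $\Sigma_{reg}$ with a short entry path into $\Delta_r^*(s_i)$, whose cost is bounded by $C r^{1-\beta}$. The main obstacle, as highlighted by the paper's own comment that ``the key observation is that the limit space is compact'', is controlling the behavior at the singular points: the nondegeneracy of $d_\infty$ on distinct $s_i$'s and the continuity of $\iota^{-1}$ at $s_i$ both require separating the ``small scale'' contribution near $s_i$ (controlled by Proposition \ref{diam1.1'}) from the ``large scale'' geometry (controlled by smoothness of $\chi_\infty$ away from the singular fibers).
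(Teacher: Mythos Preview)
Your proposal is correct and follows essentially the same approach as the paper: both use Proposition~\ref{diam1.1'} as the decisive input to show that each singular point $s_i$ contributes exactly one point to the completion, that the extended distance is well-defined, and that the resulting bijection with $\Sigma$ is a homeomorphism. The paper's own argument is a two-sentence sketch (``it can be seen\ldots'' together with the definition of $d_\infty(s,s_i)$ via approximating sequences), whereas you have simply filled in the routine details the paper omits.
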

We will denote the metric completion of $(\Sigma_{reg},\chi_{\infty})$ by $(\Sigma,d_{\infty})$ and its diameter by $D_{\infty}$.

\par In the following, without loss of any generality we assume $\Sigma\backslash\Sigma_{reg}=\{s_{0}\}$. For small $\delta>0$, let $B_{\infty}(s_{0},\delta)$ be the ball centered at $s_{0}$ of radius $\delta$ with respect to $d_{\infty}$, $K_{\delta}':=\Sigma\backslash B_{\infty}(s_{0},\delta)$ and $K_{\delta}:=f^{-1}(K_\delta')$. Similarly let $H_{\delta}'=\Sigma\backslash B_{\chi}(s_{0},\delta)$ and $H_{\delta}:=f^{-1}(H_\delta')$. We remark that, if $\delta$ is sufficiently small, one can assume that $B_{\chi}(s_{0},\delta)$ is a standard disc in $\mathbb{C}$. Moreover Proposition \ref{diam1.1'} implies that there exists a uniform constant $N>1$ such that for small $\delta$,
\begin{equation}\label{comp}
B_{\chi}(s_{0},\delta)\subset B_{\infty}(s_{0},N\delta^{\frac{1}{N}}).
\end{equation}
\par To begin with, we have
\begin{lem}\label{GH1}
\begin{equation}\label{appr1}
d_{GH}((K_{\delta}',d_{\infty}),(\Sigma,d_{\infty}))\leq\delta.
\end{equation}
\end{lem}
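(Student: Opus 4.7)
The plan is to exhibit a correspondence $R\subset K_\delta'\times\Sigma$ of distortion at most $2\delta$ and apply $d_{GH}\leq\tfrac12\mathrm{dis}(R)$. Since $(\Sigma,d_\infty)$ is a compact length space (Proposition \ref{completion}) and $\delta<D_\infty$, any near-geodesic joining $s_0$ to a point at $d_\infty$-distance $\geq\delta$ must meet the metric sphere $\{z\in\Sigma:d_\infty(z,s_0)=\delta\}$ by continuity of $d_\infty(\cdot,s_0)$, so this sphere is non-empty. Fix any $x_0$ on it; then $x_0\in K_\delta'$ and $d_\infty(x_0,s_0)=\delta$.

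I would take
$$R=\{(x,x):x\in K_\delta'\}\cup\{(x_0,y):y\in B_\infty(s_0,\delta)\},$$
which is plainly a correspondence, and estimate $|d_\infty(a_1,a_2)-d_\infty(b_1,b_2)|$ for $(a_i,b_i)\in R$ by cases. When both $b_i\in K_\delta'$ the difference vanishes. When $b_1\in K_\delta'$ and $b_2\in B_\infty(s_0,\delta)$ (so $a_1=b_1$, $a_2=x_0$), the reverse triangle inequality bounds the difference by $d_\infty(x_0,b_2)\leq d_\infty(x_0,s_0)+d_\infty(s_0,b_2)<2\delta$. When both $b_i\in B_\infty(s_0,\delta)$ the difference equals $d_\infty(b_1,b_2)\leq d_\infty(b_1,s_0)+d_\infty(s_0,b_2)<2\delta$. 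Taking supremum gives $\mathrm{dis}(R)\leq 2\delta$, hence the claim.

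The one subtle point is the choice of correspondence: routing every point of $B_\infty(s_0,\delta)$ through a single boundary point $x_0$ on $\partial B_\infty(s_0,\delta)$ rather than using a nearest-point projection is essential, since in a general length space one cannot guarantee the sharper bound $d_\infty(y,K_\delta')\leq\delta-d_\infty(y,s_0)$. With this constant-projection choice, the only nontrivial input is nonemptiness of the metric sphere at radius $\delta$, which is immediate from the length-space structure furnished by Proposition \ref{completion}; everything else is the triangle inequality.
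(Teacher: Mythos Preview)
Your argument is correct. The paper actually states this lemma without proof, treating it as immediate, so there is no ``paper's own proof'' to compare against.

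Your use of a correspondence with distortion bound is the clean way to hit the exact constant $\delta$. A more naive approach---bounding $d_{GH}$ by the Hausdorff distance of $K_\delta'$ inside $(\Sigma,d_\infty)$---would require showing that every $y\in B_\infty(s_0,\delta)$ lies within $\delta$ of $K_\delta'$, and in a general geodesic space this can fail (think of a metric tree with a short dead-end branch at $s_0$); one would only get $2\delta$. Your constant-projection trick sidesteps this, and the factor $\tfrac12$ in $d_{GH}\le\tfrac12\,\mathrm{dis}(R)$ recovers the sharp bound. Since the application in Theorem~\ref{GH5} only needs $d_{GH}<\epsilon/4$ for suitably small $\delta$, the paper's authors presumably did not worry about the constant, but your argument is the right one if one wants the inequality exactly as stated.

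One cosmetic remark: rather than invoking $\delta<D_\infty$ to ensure the sphere is nonempty, it is slightly cleaner to note that for $\delta$ small, $K_\delta'\supset H_{\delta'}'$ for some $\delta'>0$ (by \eqref{comp}), and the latter is obviously nonempty since $\chi_\infty$ is a genuine K\"ahler metric on $\Sigma_{reg}$.
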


\begin{lem}\label{GH1.5}
For any small $\delta>0$, there exists a constant $T_{\delta}$ such that for all $t\geq T_{\delta}$,
\begin{equation}\label{diam0}
diam(K_{\delta},d_{\omega(t)})\leq D_{\infty}+1.
\end{equation}
\end{lem}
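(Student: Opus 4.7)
The plan is to bound $d_{\omega(t)}(p_1,p_2)$ for arbitrary $p_1,p_2\in K_\delta$ by splitting it into a base direction contribution and a fiber contribution, exploiting the convergence results of Section \ref{est}. Writing $s_i=f(p_i)\in K_\delta'$, first I would choose, for each such pair, a smooth curve $\gamma$ in $\Sigma_{reg}$ from $s_1$ to $s_2$ of $\chi_\infty$-length at most $D_\infty+1/4$ whose image lies in a fixed compact set $K_{\delta_1}'\subset\Sigma_{reg}$ (independent of the pair); then I would lift $\gamma$ horizontally, using e.g. the $\omega_0$-orthogonal complement of the vertical foliation, to a smooth curve $\tilde\gamma\subset K_{\delta_1}=f^{-1}(K_{\delta_1}')$ starting at $p_1$.

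The length of $\tilde\gamma$ in the $\omega(t)$ metric is then controlled by Theorem \ref{Cinfty}: since $\omega(t)\to f^*\chi_\infty$ in $C^{1,\alpha}(K_{\delta_1},\omega_0)$ (in particular in $C^0$) for some $\alpha\in(0,1)$, and the tangent vectors of $\tilde\gamma$ project to those of $\gamma$, one has $\omega(t)(\dot{\tilde\gamma},\overline{\dot{\tilde\gamma}})\to\chi_\infty(\dot\gamma,\overline{\dot\gamma})$ uniformly along $\tilde\gamma$, so that $L_{\omega(t)}(\tilde\gamma)\le L_{\chi_\infty}(\gamma)+1/4\le D_\infty+1/2$ whenever $t\ge T_1(\delta,\delta_1)$. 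Let $q_2\in X_{s_2}$ denote the endpoint of $\tilde\gamma$; then by Proposition \ref{fiber conv} and the remark following it (uniform convergence of $(1+t)\omega(t)|_{X_s}$ to a flat metric as $s$ varies in the compact set $K_\delta'\subset\Sigma_{reg}$), the $\omega(t)$-diameter of $X_s$ is at most $C(1+t)^{-1/2}$ uniformly for $s\in K_\delta'$, so for $t\ge T_2$ we get $d_{\omega(t)}(q_2,p_2)\le 1/4$. Combining via the triangle inequality,
\[d_{\omega(t)}(p_1,p_2)\le L_{\omega(t)}(\tilde\gamma)+d_{\omega(t)}(q_2,p_2)\le D_\infty+\tfrac{1}{2}+\tfrac{1}{4}\le D_\infty+1\]
for all $t\ge T_\delta:=\max(T_1,T_2)$.

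The main technical obstacle is the first step: ensuring that for every pair $(s_1,s_2)\in K_\delta'\times K_\delta'$ the connecting curve $\gamma$ of $\chi_\infty$-length at most $D_\infty+1/4$ can be chosen inside a single fixed compact subset $K_{\delta_1}'\subset\Sigma_{reg}$, so that the uniform $C^0$-convergence of $\omega(t)$ to $f^*\chi_\infty$ may be invoked with a time bound depending only on $\delta$. I would achieve this by starting with a near-$d_\infty$-minimizer $\gamma_0$ in $\Sigma_{reg}$ (of $\chi_\infty$-length at most $D_\infty+1/8$) and rerouting it around a small $\chi$-disc $B_\chi(s_0,r)$ whenever it enters this region: by Lemma \ref{blowup'} (i.e.\ the bound $F\le C|s|^{-2\beta}$ with $\beta<1$) the arc along $\partial B_\chi(s_0,r)$ used for the rerouting has $\chi_\infty$-length $O(r^{1-\beta})$, which can be made smaller than $1/8$ by taking $r$ sufficiently small, and by \eqref{comp} the rerouted curve will then lie in $K_{\delta_1}'$ for some $\delta_1=\delta_1(r)>0$. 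Since the modification is quantitative in $r$ alone, uniformity in $(s_1,s_2)$ is automatic.
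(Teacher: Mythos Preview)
Your proposal is correct and follows essentially the same approach as the paper: take a near-$d_\infty$-minimizer on the base, reroute it around a small $\chi$-disc about $s_0$ using the $O(r^{1-\beta})$ bound from Lemma~\ref{blowup'}, lift it to $X_{reg}$ (the paper uses explicit local product trivializations $f^{-1}(U)\cong U\times E$ where you use the $\omega_0$-horizontal distribution, which is equivalent), apply the $C^0$-convergence $\omega(t)\to f^*\chi_\infty$ on the resulting fixed compact set, and close up with a fiber segment of vanishing $\omega(t)$-length. One minor correction: \eqref{comp} gives $B_\chi(s_0,r)\subset B_\infty(s_0,Nr^{1/N})$, hence the inclusion $K_{Nr^{1/N}}'\subset H_r'$ rather than the one you state, but this is harmless since all you actually need is that the rerouted curve lies in \emph{some} fixed compact subset of $\Sigma_{reg}$, and $H_r'$ itself already does the job (the paper likewise works directly with $H_{(\delta/10N)^N}'$).
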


\begin{proof}
For any fixed $p,q\in K_{\delta}$, one can choose a piecewise smooth curve $\gamma(z)\subset H_{(\frac{\delta}{10N})^{N}}'$, $z\in[0,1]$, connecting $f(p)$ and $f(q)$, such that
\begin{equation}\label{0.5}
L_{\chi_{\infty}}(\gamma)\leq d_{\infty}(f(p),f(q))+\frac{\delta}{4}.
\end{equation}
This can be chosen as follows: first choose a piecewise smooth curve $\bar{\gamma}\subset\Sigma_{reg}$ connecting $f(p),f(q)$ with $L_{\chi_{\infty}}(\bar{\gamma})\leq d_{\infty}(f(p),f(q))+\frac{\delta}{10}$. If $\bar{\gamma}\bigcap B_{\chi}(s_{0},(\frac{\delta}{10N})^{N})=\emptyset$, we are done; otherwise we replace the part of $\bar{\gamma}$ contained in $B_{\chi}(s_{0},(\frac{\delta}{10N})^{N})$ by a curve lies in $\partial B_{\chi}(s_{0},(\frac{\delta}{10N})^{N})$ with length with respect to $\chi_{\infty}$ no more than $\frac{\delta}{10}$ (here we have identified $B_{\chi}(s_{0},(\frac{\delta}{10N})^{N})$ with some small standard disc in $\mathbb{C}$). We obtain a curve $\gamma$ as desired.
\par We will lift $\gamma$ to a curve in $H_{(\frac{\delta}{10N})^{N}}$ connecting $p$ and $q$.
\par First, without loss of any generality, we assume that $\gamma(z)$, $z\in[0,1]$, is smooth and covered by two open subsets $U$ and $V$ of $\Sigma_{reg}$ such that $f^{-1}(U)=U\times E$, $f^{-1}(V)=V\times E$, where $E$ is a smooth fiber and both equalities mean diffeomorphisms. Fix a point $r':=\gamma(z_{1})\in U\cap V$. Define $\gamma_{1}(z)=(\gamma(z),e_{1})$ for some $e_{1}\in E$ with $p=(\gamma(0),e_{1})$, $z\in[0,z_{1}]$ and $\gamma_{2}(z)=(\gamma(z-1),e_{2})$ for some $e_{2}\in E$ with $q=(\gamma(1),e_{2})$, $z\in[z_{1}+1,2]$. Also connect $\gamma_{1}(z_{1})$ and $\gamma_{2}(z_{1}+1)$ by a curve $\gamma_{3}(z)\subset E$, $z\in[z_{1},z_{1}+1]$. Now by connecting $\gamma_{1}$, $\gamma_{3}$ and $\gamma_{2}$ we obtain a curve $\sigma(z)\subset H_{(\frac{\delta}{10N})^{N}}$, $z\in[0,2]$, connecting $p$ and $q$. Note that the diameter of smooth fibers over $H_{(\frac{\delta}{10N})^{N}}'$ will go to zero uniformly as $t\rightarrow\infty$, so we can choose $\gamma_{3}$ with arbitrarily small length (with respect to $\omega(t)$) as long as $t$ is large enough. Then we can find a $T_{\delta}$ such that for $t\geq T_{\delta}$,
\begin{align}\label{0.6}
d_{\omega(t)}(p,q)&\leq L_{\omega(t)}(\sigma)\nonumber\\
&\leq L_{\omega(t)}(\gamma_{1})+L_{\omega(t)}(\gamma_{2})+\frac{\delta}{4}\nonumber\\
&\leq L_{f^{*}\chi_{\infty}}(\gamma_{1})+L_{f^{*}\chi_{\infty}}(\gamma_{2})+\frac{\delta}{2}\nonumber\\
&=L_{\chi_{\infty}}(\gamma)+\frac{\delta}{2}\nonumber\\
&\leq d_{\infty}(f(p),f(q))+\frac{3\delta}{4},
\end{align}
where in the third inequality we have used Theorem \ref{Cinfty}.
Thus we obtain
\begin{equation}\label{0.7}
diam(K_{\delta},d_{\omega(t)})\leq D_{\infty}+\delta\nonumber.
\end{equation}
We complete the proof by choosing small $\delta<1$.
\end{proof}

\begin{lem}\label{diam2}
There exist positive constants $D_{1}$ and $T_{1}$ such that for all $t\geq T_{1}$,
\begin{equation}\label{0.12}
diam(X,\omega(t))\leq D_{1}.
\end{equation}
\end{lem}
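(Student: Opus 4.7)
The plan is to combine the diameter bound on the ``good'' region $K_{\delta}$ from Lemma \ref{GH1.5} with a uniform bound on the distance from the ``bad'' region $X\setminus K_{\delta}$ (a neighborhood of the singular fiber $X_{s_0}$) back to $K_{\delta}$. The main task is therefore to show that this bad region does not contribute an unbounded amount to the $\omega(t)$-diameter as $t\to\infty$.

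Fix a small $\delta_0\in(0,1)$. By Lemma \ref{GH1.5}, for all $t\geq T_{\delta_0}$,
\begin{equation*}
\mathrm{diam}(K_{\delta_0}, d_{\omega(t)})\leq D_\infty+1.
\end{equation*}
By the triangle inequality it therefore suffices to produce a constant $M$ (depending on $\delta_0$ but not on $t$) such that
\begin{equation*}
\sup_{p\in X\setminus K_{\delta_0}} d_{\omega(t)}(p, K_{\delta_0})\leq M,
\end{equation*}
and then the lemma follows with $D_1:=D_\infty+1+2M$ and $T_1:=T_{\delta_0}$.

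To obtain this uniform distance bound I would adapt the path-lifting construction used in the proof of Lemma \ref{GH1.5}. For $p\in(X\setminus K_{\delta_0})\cap X_{reg}$, first use part (2) of Theorem \ref{main} to move within the fiber $X_{f(p)}$ onto a local horizontal section of $f$, at an $\omega(t)$-cost of order $(1+t)^{-1/2}$, which tends to $0$ uniformly. Next, lift a short curve in $\Sigma$ joining $f(p)$ to a point of $\partial K'_{\delta_0}$: combining \eqref{comp} with Proposition \ref{diam1.1'}, such a curve can be chosen with $\chi_\infty$-length of order $\delta_0^{(1-\beta)/N}$. On any compact subset of $X_{reg}$ the $\omega(t)$-length of the lift converges to its $\chi_\infty$-length by Theorem \ref{Cinfty}, yielding an upper bound independent of $t$ for all $t$ sufficiently large.

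The main technical obstacle is making this estimate uniform as $p$ approaches the singular fiber $X_{s_0}$, because the convergence from Theorem \ref{Cinfty} only holds on compact subsets of $X_{reg}$, and the bound $\omega(t)\leq Ce^{C\sigma^{-\lambda_{3}}}\omega_t$ degenerates on $X_{s_0}$. To deal with this, I would work in the semi-flat cover set up in Section \ref{est}: after the fiber rescaling $\lambda_t$, by \eqref{C2d} the metric $\lambda_t^*p^*\omega(t)$ is uniformly comparable to the fixed background metric $p^*(\tilde\omega_{SF}+f^*\chi)$ on compact subsets of $B\times\mathbb{C}$, and by Lemma \ref{Ck.} it satisfies uniform $C^{1,\alpha}$ bounds there. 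These uniform estimates allow one to control the $\omega(t)$-length of horizontal lifts in the semi-flat cover in terms of the Euclidean length, uniformly in $t$ and in the proximity to the singular fiber; projecting back to $X$ then yields the required bound for $p$ close to, but not on, $X_{s_0}$. Finally, for $p\in X_{s_0}$ itself, one concludes by approximating $p$ by a sequence of points $p_n\in X_{reg}$ converging to $p$ in the fixed metric $\omega_0$ and invoking lower-semicontinuity of $d_{\omega(t)}(\cdot,K_{\delta_0})$. The crux of the argument is precisely this uniformity of the length estimate in the distance of $p$ from $X_{s_0}$.
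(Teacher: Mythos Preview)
Your proposal has a genuine gap at exactly the point you identify as ``the crux of the argument.'' The semi-flat cover $p:B\times\mathbb{C}\to U$ from Section~\ref{est} is constructed only over a small ball $B\subset\Sigma_{reg}$; the semi-flat form $\tilde\omega_{SF}$ and the local biholomorphism $p$ do not extend across the singular fiber $X_{s_0}$. Moreover, the constant $C$ in \eqref{C2d} is obtained from \eqref{C2c}, where the upper bound $Ce^{C\sigma^{-\lambda_3}}$ blows up as $\sigma\to0$, i.e.\ as one approaches $X_{s_0}$. So neither the semi-flat cover nor the two-sided metric estimate gives you any uniform control on $\omega(t)$-lengths of horizontal lifts as the base point approaches $s_0$; your claimed uniformity ``in the proximity to the singular fiber'' is precisely what is not available from these tools. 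The final step of approximating $p\in X_{s_0}$ by $p_n\in X_{reg}$ then also fails, since the bound you would be passing to the limit is not uniform in $n$.

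The paper avoids this problem entirely by a different mechanism. Instead of estimating path lengths near $X_{s_0}$, it combines the volume estimate \eqref{vol} with Bishop--Gromov volume comparison, which is available thanks to the global Ricci lower bound \eqref{Ric bound}. One observes that $Vol_{\omega(t)}(X\setminus K_\delta)\le (1+t)^{-1}\epsilon$ (since $X\setminus X_{reg}$ has real codimension~2), so the ball $B_{\omega(t)}(x_t,R_t)\subset X\setminus K_\delta$ around the farthest point $x_t$ has small volume, while $B_{\omega(t)}(x_t,R_t+D_\infty+1)$ contains $K_\delta$ and hence has volume bounded below. Bishop--Gromov then forces $R_t$ to be bounded. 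This argument requires no pointwise control of $\omega(t)$ near the singular fiber, which is why it succeeds where direct length estimates do not.
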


\begin{proof}
Let $\epsilon>0$ be arbitrary. By Lemma \ref{GH1.5}, we can choose a $K_{\delta}$ and $T_{1}$ such that for all $t\geq T_{1}$,
\begin{equation}\label{0.13}
diam(K_{\delta}, d_{\omega(t)})\leq D_{\infty}+1.
\end{equation}
Using the volume estimate (\ref{vol}) along the continuity method and the fact that $X\setminus X_{reg}$ has real codimension 2 (it is a proper analytic subvariety of $X$), by decreasing $\delta$ if necessary, we have
\begin{equation}\label{0.14}
Vol_{\omega(t)}(X\backslash K_{\delta})\leq(1+t)^{-1}\epsilon
\end{equation}
for all $t\geq T_{1}$.
\par Let $x_{t}\in X\backslash K_{\delta}$ be a point which achieves the maximal distance $R_{t}$ to $K_{\delta}$ in $(X,\omega(t))$. Then $B_{\omega(t)}(x_{t},R_{t})\subset X\backslash K_{\delta}$. On the one hand we have
\begin{align}\label{0.15}
\frac{Vol(B_{\omega(t)}(x_{t},R_{t}+D_\infty+1))}{Vol(B_{\omega(t)}(x_{t},R_{t}))}&\geq
\frac{Vol_{\omega(t)}(X)-Vol_{\omega(t)}(X\backslash K_{\delta})}{Vol_{\omega(t)}(B_{\omega(t)}(x_{t},R_{t}))}\nonumber\\
&\geq\frac{(1+t)^{-1}(C_{1}-\epsilon)}{(1+t)^{-1}\epsilon}\nonumber\\
&\geq C_{2}\epsilon^{-1}.
\end{align}
On the other hand, by the uniform lower bound of Ricci curvature (\ref{Ric bound}), we have
\begin{align}\label{0.16}
\frac{Vol(B_{\omega(t)}(x_{t},R_{t}+D_\infty+1))}{Vol(B_{\omega(t)}(x_{t},R_{t}))}\leq\frac{\int_{0}^{R_{t}+D_\infty+1}sinh^{3}vdv}{\int_{0}^{R_{t}}sinh^{3}vdv}.
\end{align}
Thus if we choose $\epsilon$ small enough, $R_{t}$ will be uniformly bounded and the Lemma \ref{diam2} is proved.
\end{proof}

\begin{lem}\label{GH2}
For any small $\epsilon>0$, there exists a $K_{\delta}$ and a positive constant $T_{2}$ such that for all $t\geq T_{2}$
\begin{equation}\label{appr2}
d_{GH}((K_{\delta},d_{\omega(t)}),(X,d_{\omega(t)}))\leq\epsilon.
\end{equation}
\end{lem}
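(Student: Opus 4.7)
The plan is to bound the Hausdorff distance from $X$ to $K_{\delta}$ in $(X,d_{\omega(t)})$, since this automatically bounds the Gromov–Hausdorff distance between $(K_{\delta},d_{\omega(t)})$ and $(X,d_{\omega(t)})$ (the other direction is $0$ because $K_{\delta}\subset X$). Thus it suffices to show: given $\epsilon>0$, we may choose $\delta$ small and $T_{2}$ large so that for every $t\geq T_{2}$ and every $x\in X\setminus K_{\delta}=f^{-1}(B_{\infty}(s_{0},\delta))$ one has $d_{\omega(t)}(x,K_{\delta})\leq\epsilon$.

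I would argue by contradiction, mimicking the volume-comparison argument in Lemma \ref{diam2}. Suppose there exists $x\in X\setminus K_{\delta}$ with $d_{\omega(t)}(x,K_{\delta})>\epsilon$; then the whole ball $B_{\omega(t)}(x,\epsilon)$ lies in $X\setminus K_{\delta}$. Exactly as in \eqref{0.14}, using the volume estimate \eqref{vol} together with the fact that $X\setminus X_{reg}$ has real codimension two, we can make
$$Vol_{\omega(t)}(X\setminus K_{\delta})\leq\mu(\delta)(1+t)^{-1},$$
where $\mu(\delta)\to 0$ as $\delta\to 0$, uniformly in $t\geq 1$. Therefore
$$Vol_{\omega(t)}(B_{\omega(t)}(x,\epsilon))\leq\mu(\delta)(1+t)^{-1}.$$

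In the other direction, I would use the Ricci lower bound $Ric(\omega(t))\geq -2\omega(t)$ from \eqref{Ric bound} together with the uniform diameter bound $diam(X,\omega(t))\leq D_{1}$ from Lemma \ref{diam2}, valid for $t\geq T_{1}$. Since $B_{\omega(t)}(x,D_{1})=X$, Bishop–Gromov volume comparison (in real dimension $4$ with sectional curvature model $-1$) yields
$$\frac{Vol_{\omega(t)}(X)}{Vol_{\omega(t)}(B_{\omega(t)}(x,\epsilon))}\leq\frac{\int_{0}^{D_{1}}\sinh^{3}(v)\,dv}{\int_{0}^{\epsilon}\sinh^{3}(v)\,dv}=:C(\epsilon).$$
Combining with the uniform lower bound $Vol_{\omega(t)}(X)\geq c_{0}(1+t)^{-1}$ from \eqref{vol}, we obtain
$$Vol_{\omega(t)}(B_{\omega(t)}(x,\epsilon))\geq\frac{c_{0}}{C(\epsilon)}(1+t)^{-1}.$$

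Now the two bounds are incompatible once $\mu(\delta)<c_{0}/C(\epsilon)$. Hence, choosing $\delta=\delta(\epsilon)$ small enough so that this inequality holds, and then taking $T_{2}\geq T_{1}$, we conclude that no such $x$ exists, which proves the lemma. The main obstacle is purely bookkeeping: one needs to confirm that $\mu(\delta)$ really is $t$-independent and that both sides of the volume comparison scale in the same way with $(1+t)^{-1}$, so that a $t$-independent choice of $\delta$ suffices. This is exactly the mechanism already used in the proof of Lemma \ref{diam2}, and I do not anticipate any further difficulty.
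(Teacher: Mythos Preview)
Your argument is correct and is essentially the same as the paper's: both use the Ricci lower bound \eqref{Ric bound}, the volume estimate \eqref{vol} on $X\setminus K_\delta$, and Bishop--Gromov volume comparison to force the maximal distance from $X\setminus K_\delta$ to $K_\delta$ to be small. The only cosmetic difference is that you invoke the diameter bound $D_1$ from Lemma \ref{diam2} and compare $B_{\omega(t)}(x,\epsilon)$ against $B_{\omega(t)}(x,D_1)=X$, whereas the paper re-runs the comparison of Lemma \ref{diam2} between $B_{\omega(t)}(x_t,R_t)$ and $B_{\omega(t)}(x_t,R_t+D_\infty+1)$ with the volume parameter sharpened to $\epsilon^5$, then uses the already-established bound on $R_t$ to conclude $\int_0^{R_t}\sinh^3 v\,dv\le C\epsilon^5$ and hence $R_t\le\epsilon$.
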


\begin{proof}
The argument in the proof of Lemma \ref{diam2} implies that, for any $\epsilon>0$, there exist a $K_{\delta}$, two positive constants $C_{2}$ and $T_{2}$ such that for all $t\geq T_{2}$,
\begin{equation}\label{0.17}
C_{2}\epsilon^{-5}\leq\frac{\int_{0}^{R_{t}+D_{\infty}+1}sinh^{3}vdv}{\int_{0}^{R_{t}}sinh^{3}vdv}.
\end{equation}
Since we have known that $R_{t}$ is uniformly bounded, (\ref{0.17}) gives
\begin{equation}\label{0.18}
\int_{0}^{R_{t}}sinh^{3}vdv\leq C_{3}\epsilon^{5}\nonumber,
\end{equation}
which implies that, if $\epsilon$ small enough, for all $t\geq T_{2}$ we have
\begin{equation}\label{0.19}
R_{t}\leq\epsilon\nonumber.
\end{equation}
Thus we conclude (\ref{appr2}).
\end{proof}

\begin{lem}\label{GH3}
For any fixed $\delta>0$, there exists a $\delta_0$ and a $T_{3}>0$ such that for any $p,q\in K_{\delta}$ and $t\geq T_{3}$, one can find a curve $\gamma_{t}\subset K_{\delta_0}$ connecting $p$ and $q$ such that
\begin{equation}\label{GH3.1}
L_{\omega(t)}(\gamma_t)\le d_{\omega(t)}(p,q)+\frac{\delta}{4}.
\end{equation}
\end{lem}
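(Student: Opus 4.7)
The plan is to start from a piecewise smooth $\tilde\gamma_t$ from $p$ to $q$ that nearly realizes $d_{\omega(t)}(p,q)$ and, if it exits the target set $K_{\delta_0}$, excise the excursion near the singular fiber and replace it with a short detour constructed first in the base $\Sigma$ via the radial-angular recipe from the proof of Proposition \ref{diam1.1'} and then lifted to $X$ as in the proof of Lemma \ref{GH1.5}.

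First I fix parameters. Choose $r_1>0$ small enough that the estimate in Proposition \ref{diam1.1'} gives $Cr_1^{1-\beta}\le\delta/16$ for the $\chi_\infty$-length of the detours we will use inside $\Delta_{r_1}$. By the topological equivalence $X_\infty\cong\Sigma$ from Proposition \ref{completion}, pick $\delta_1>0$ small enough that $\overline{B_\infty(s_0,\delta_1)}\subset\Delta_{r_1/2}$, and set $\rho_{\min}:=(\delta_1/N)^N$, so that by (\ref{comp}) we have $\Delta_{\rho_{\min}}\subset B_\infty(s_0,\delta_1)$ and every point of $\partial B_\infty(s_0,\delta_1)$ has $\chi$-modulus at least $\rho_{\min}$. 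Define
\[
\delta_0:=\min_{z\in\overline{\Delta_{r_1}}\setminus\Delta_{\rho_{\min}}}d_\infty(z,s_0),
\]
which is strictly positive because the annulus is a compact subset of $\Sigma_{reg}$ on which $d_\infty(\cdot,s_0)$ is continuous and positive; after shrinking $\delta_1$ if necessary, arrange $\delta_0<\delta$ so that $p,q\in K_\delta\subset K_{\delta_0}$. Using Theorem \ref{Cinfty} and part (2) of Theorem \ref{main}, choose $T_3$ large so that for $t\ge T_3$: (i) any horizontal lift (under a fixed finite cover by local trivializations of the annulus) of a curve in $\overline{\Delta_{r_1}}\setminus\Delta_{\rho_{\min}}$ has $\omega(t)$-length within $\delta/32$ of its $\chi_\infty$-length downstairs, and (ii) every fiber over $\overline{\Delta_{r_1}}\setminus\Delta_{\rho_{\min}}$ has $\omega(t)$-diameter at most $\delta/32$.

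For $t\ge T_3$ and $p,q\in K_\delta$, pick a piecewise smooth $\tilde\gamma_t$ from $p$ to $q$ with $L_{\omega(t)}(\tilde\gamma_t)\le d_{\omega(t)}(p,q)+\delta/16$. If $\tilde\gamma_t\subset K_{\delta_0}$ we are done. Otherwise, since $K_{\delta_1}\subset K_{\delta_0}$, the set $\{s:\tilde\gamma_t(s)\notin K_{\delta_1}\}$ is nonempty, and I let $a,b$ be its infimum and supremum. Since $K_{\delta_1}$ is closed, $f(\tilde\gamma_t(a)),f(\tilde\gamma_t(b))\in\partial B_\infty(s_0,\delta_1)\subset\Delta_{r_1/2}^*$, with $\chi$-modulus $\ge\rho_{\min}$. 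The radial-angular construction from the proof of Proposition \ref{diam1.1'} inside $\Delta_{r_1}$ then yields a curve $\bar\sigma\subset\overline{\Delta_{r_1}}\setminus\Delta_{\rho_{\min}}$ joining these two projections with $L_{\chi_\infty}(\bar\sigma)\le Cr_1^{1-\beta}\le\delta/16$; by the definition of $\delta_0$ this $\bar\sigma$ automatically lies in $K_{\delta_0}'$. Lifting $\bar\sigma$ through the chosen trivializations and closing up with one fiber segment exactly as in the proof of Lemma \ref{GH1.5} produces $\sigma\subset K_{\delta_0}$ from $\tilde\gamma_t(a)$ to $\tilde\gamma_t(b)$ with
\[
L_{\omega(t)}(\sigma)\le L_{\chi_\infty}(\bar\sigma)+\delta/32+\delta/32\le\delta/8.
\]
Setting $\gamma_t:=\tilde\gamma_t|_{[0,a]}*\sigma*\tilde\gamma_t|_{[b,1]}\subset K_{\delta_0}$, we obtain
\[
L_{\omega(t)}(\gamma_t)\le L_{\omega(t)}(\tilde\gamma_t)+L_{\omega(t)}(\sigma)\le d_{\omega(t)}(p,q)+\delta/16+\delta/8\le d_{\omega(t)}(p,q)+\delta/4,
\]
which is (\ref{GH3.1}).

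The main technical point is securing a uniform positive $\delta_0$ for the annular region on which the detour is allowed to live; this reduces to compactness of $\overline{\Delta_{r_1}}\setminus\Delta_{\rho_{\min}}$ inside $\Sigma_{reg}$ together with continuity and positivity of $d_\infty(\cdot,s_0)$ there (using that $(X_\infty,d_\infty)$ is homeomorphic to $\Sigma$). Given $\delta_0$, Proposition \ref{diam1.1'} keeps the base detour short in $\chi_\infty$, and the local $C^{1,\alpha}$-convergence $\omega(t)\to f^*\chi_\infty$ from Theorem \ref{Cinfty} combined with the vanishing fiber diameters from part (2) of Theorem \ref{main} lets us lift it to $(X,\omega(t))$ with essentially the same length.
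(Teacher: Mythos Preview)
Your proof is correct and follows essentially the same strategy as the paper: excise the portion of a near-geodesic that enters a small neighborhood of the singular fiber and replace it by a lift of a short detour in the base, using Proposition~\ref{diam1.1'} to bound the detour's $\chi_\infty$-length and Theorem~\ref{Cinfty} plus collapsing fibers to control the lift. The paper excises over the $\chi$-ball $B_\chi\bigl(s_0,(\delta/10N)^N\bigr)$ (so the entry and exit project to a single Euclidean circle and only an angular arc is needed), whereas you excise over a $d_\infty$-ball and therefore need both the radial and angular pieces; this is a cosmetic difference, not a different route.
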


\begin{proof}
This can be proved by an argument similar to that in the proof of Lemma \ref{GH1.5}. In fact, if the minimal geodesic $\gamma_t$ in $(X,\omega(t))$ connecting $p$ and $q$ intersects $f^{-1}(B_{\chi}(s_0,\left(\frac{\delta}{10N}\right)^{N}))$, then we may replace the part of $\gamma_t$ contained in $f^{-1}(B_{\chi}(s_0,\left(\frac{\delta}{10N}\right)^{N}))$ by lifting suitably a circle $\gamma'\subset\partial B_{\chi}(s_0,\left(\frac{\delta}{10N}\right)^{N})$ and obtain a new curve, still denote it by $\gamma_t$, satisfying \eqref{GH3.1}. Now we complete the proof by choosing a sufficiently large $T_3$ and a $\delta_0<\delta$ with $H_{\left(\frac{\delta}{10N}\right)^{N}}\subset K_{\delta_0} $.
\end{proof}

\begin{lem}\label{GH4}
There exists a $T_{\delta}>0$ such that for all $t\geq T_{\delta}$,
\begin{equation}\label{0.26}
d_{GH}((K_{\delta},d_{\omega(t)}),(K_{\delta}',d_{\infty}))\leq\delta.
\end{equation}
\end{lem}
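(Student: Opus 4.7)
The plan is to use the map $f:K_\delta\to K_\delta'$---surjective because $K_\delta=f^{-1}(K_\delta')$---as a $\delta$-Gromov-Hausdorff approximation. Since $f(K_\delta)=K_\delta'$ is trivially $0$-dense, it suffices to establish the distortion bound $|d_{\omega(t)}(p,q)-d_\infty(f(p),f(q))|\le\delta$ for all $p,q\in K_\delta$ and $t$ sufficiently large.

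For the upper bound $d_{\omega(t)}(p,q)\le d_\infty(f(p),f(q))+\delta/2$, I would essentially repeat the argument in the proof of Lemma \ref{GH1.5}: pick a near-minimizing curve $\gamma$ in $\Sigma_{reg}$ from $f(p)$ to $f(q)$ avoiding $B_\chi(s_0,(\delta/10N)^N)$ (possible by Proposition \ref{diam1.1'}), lift $\gamma$ piece-by-piece to $X$ using local product trivializations of $f$ over $K_{(\delta/10N)^N}'$, and close up the fiber endpoints by short arcs whose $\omega(t)$-length tends to zero by part (2) of Theorem \ref{main}. The $C^0$-convergence $\omega(t)\to f^*\chi_\infty$ from Theorem \ref{Cinfty}, applied on the fixed compact set containing the lift, then shows the $\omega(t)$-length of the lifted curve differs from $L_{\chi_\infty}(\gamma)$ by at most $\delta/4$.

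For the lower bound $d_\infty(f(p),f(q))\le d_{\omega(t)}(p,q)+\delta/2$, I would invoke Lemma \ref{GH3} to obtain $\delta_0<\delta$ and $T_3>0$ such that, for $t\ge T_3$, there is a near-minimizing curve $\gamma_t\subset K_{\delta_0}$ from $p$ to $q$ with $L_{\omega(t)}(\gamma_t)\le d_{\omega(t)}(p,q)+\delta/4$. Projecting to $\tilde\gamma_t:=f\circ\gamma_t\subset K_{\delta_0}'$ and using $d_\infty(f(p),f(q))\le L_{\chi_\infty}(\tilde\gamma_t)$, the lower bound reduces to the length comparison $L_{\chi_\infty}(\tilde\gamma_t)\le L_{\omega(t)}(\gamma_t)+\delta/4$ for $t$ large.

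The main technical obstacle is exactly this length comparison. The pointwise inequality $f^*\chi_\infty\le\omega(t)+\epsilon_t\omega_0$ on $K_{\delta_0}$ (from the $C^0$-convergence, with $\epsilon_t\to 0$) integrates to $L_{\chi_\infty}(\tilde\gamma_t)\le L_{\omega(t)}(\gamma_t)+\sqrt{\epsilon_t}\,L_{\omega_0}(\gamma_t)$, and the error term is a priori problematic because the only bound available is $L_{\omega_0}(\gamma_t)=O(\sqrt{1+t})$, coming from $\omega(t)\ge c(\delta_0)(1+t)^{-1}\omega_0$ on $K_{\delta_0}$. To circumvent this I would decompose $\gamma_t'$ into horizontal and vertical parts relative to an $\omega_0$-orthogonal splitting of $TX|_{K_{\delta_0}}$: the vertical part contributes nothing to $L_{\chi_\infty}(\tilde\gamma_t)$ since $f^*\chi_\infty$ vanishes on fibers, while the horizontal part admits a clean comparison via the $C^0$-convergence and the uniform positivity of $f^*\chi_\infty$ on the horizontal distribution over $K_{\delta_0}$. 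The mixed terms are controlled by the quantitative decay $\sup_{K_{\delta_0}}|tr_{\omega(t)}f^*\chi_\infty-1|\le C(\delta_0)\sqrt{H(u)}$ derived from \eqref{eq2.2.5} and part (5) of Lemma \ref{est.1}, which forces the off-diagonal components of $\omega(t)-f^*\chi_\infty$ to decay strictly faster than a generic $C^0$-bound suggests. Combining the two directions then gives the lemma.
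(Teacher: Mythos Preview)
Your overall structure is the same as the paper's: use $f$ (the paper also introduces a section $g:K_\delta'\to K_\delta$, but the content is identical) as a $\delta$-approximation, get the upper bound $d_{\omega(t)}(p,q)\le d_\infty(f(p),f(q))+\delta$ by repeating the argument of Lemma~\ref{GH1.5}, and get the lower bound via Lemma~\ref{GH3} followed by a length comparison $L_{f^*\chi_\infty}(\gamma_t)\le L_{\omega(t)}(\gamma_t)+\delta/4$ along the near-geodesic $\gamma_t\subset K_{\delta_0}$.

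Your concern about this last length comparison is well taken: the paper simply invokes Theorem~\ref{Cinfty}, but as you correctly observe, $C^0$-convergence $\omega(t)\to f^*\chi_\infty$ measured in $\omega_0$ does \emph{not} by itself imply $f^*\chi_\infty\le(1+o(1))\omega(t)$, because $\gamma_t$ varies with $t$ and its $\omega_0$-length can grow like $\sqrt{1+t}$. However, your proposed fix via a horizontal/vertical splitting and control of mixed terms is more work than necessary. The point is that $f^*\chi_\infty$ has \emph{rank one} (it is pulled back from the curve $\Sigma$), so its largest eigenvalue relative to $\omega(t)$ equals its trace, and the pointwise inequality
\[
f^*\chi_\infty \;\le\; \bigl(tr_{\omega(t)}f^*\chi_\infty\bigr)\,\omega(t)
\]
holds everywhere. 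Now \eqref{eq2.2.5} gives $tr_{\omega(t)}f^*\chi_\infty\le 1+C(\delta_0)\sqrt{H(u)}$ on $K_{\delta_0}$, hence
\[
L_{f^*\chi_\infty}(\gamma_t)\;\le\;\sqrt{1+C(\delta_0)\sqrt{H(u)}}\,L_{\omega(t)}(\gamma_t),
\]
and since $L_{\omega(t)}(\gamma_t)\le D_1+1$ is uniformly bounded (Lemma~\ref{diam2}), the desired comparison follows immediately once $u$ is large. This replaces your decomposition argument by a single line and uses only the upper bound in \eqref{eq2.2.5}; no separate control of off-diagonal components is needed.
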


\begin{proof}
Define a map $g:K_{\delta}'\rightarrow K_{\delta}$ by mapping every point $s\in K_{\delta}'$ to some chosen point $g(s)$ in $X_{s}$. Given arbitrary $p,q\in K_{\delta}$ and $s,r\in K_{\delta}'$. By Lemma \ref{GH3}, for $t$ large enough, we can find a curve $\gamma_{t}\subset K_{\delta_0}$ connecting $p$ and $q$ such that
\begin{equation}\label{0.27}
L_{\omega(t)}(\gamma_{t})\leq d_{\omega(t)}(p,q)+\frac{\delta}{4}.
\end{equation}
By Theorem \ref{Cinfty}, we can find a $T_{\delta}$ such that for all $t\geq T_{\delta}$,
\begin{align}\label{0.28}
L_{\omega(t)}(\gamma_{t})&\geq L_{f^{*}\chi_{\infty}}(\gamma_{t})-\frac{\delta}{4}\nonumber\\
&\geq L_{\chi_{\infty}}(f\circ\gamma_{t})-\frac{\delta}{4}\nonumber\\
&\geq d_{\infty}(f(p),f(q))-\frac{\delta}{4}\nonumber.
\end{align}
Therefore
\begin{equation}\label{0.29}
d_{\infty}(f(p),f(q))\leq d_{\omega(t)}(p,q)+\delta.
\end{equation}
A similar argument gives
\begin{equation}\label{0.29.5}
d_{\infty}(s,r)\leq d_{\omega(t)}(g(s),g(r))+\delta.
\end{equation}
On the other hand, using the argument in Lemma \ref{GH1.5}, we can show that, for all $t\geq T_{\delta}$ (increase $T_{\delta}$ if necessary),
\begin{equation}\label{0.32}
d_{\omega(t)}(g(s),g(r))\leq d_{\infty}(s,r)+\delta,
\end{equation}
and
\begin{equation}\label{0.33}
d_{\omega(t)}(p,q)\leq d_{\infty}(f(p),f(q))+\delta.
\end{equation}
Also of course we have, for any $s\in K_{\delta}'$,
\begin{equation}\label{0.34}
d_{\infty}(s,f\circ g(s))\equiv0
\end{equation}
and for any $p\in K_{\delta}$,
\begin{equation}\label{0.35}
d_{\omega(t)}(p,g\circ f(p))\leq\delta
\end{equation}
for all $t\geq T_{\delta}$.
Combining (\ref{0.29}), (\ref{0.29.5}), (\ref{0.32}), (\ref{0.33}), (\ref{0.34}) and (\ref{0.35}), we conclude Lemma \ref{GH4}.
\end{proof}

Now we are ready to prove part (4) of Theorem \ref{main}, i.e.,
\begin{thm}\label{GH5}
$(X,\omega(t))\rightarrow(\Sigma,d_{\infty})$ in Gromov-Hausdorff topology as $t\rightarrow\infty$.
\end{thm}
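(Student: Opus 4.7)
The plan is to combine Lemmas \ref{GH1}, \ref{GH2}, and \ref{GH4} by the triangle inequality for the Gromov--Hausdorff distance, using $(K_\delta, d_{\omega(t)})$ and $(K_\delta', d_\infty)$ as the ``bridge'' between the two sides. All of the analytic work has already been concentrated in the preceding lemmas, so the final step is essentially formal.

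Concretely, given $\epsilon>0$, I would first invoke Lemma \ref{GH1} and choose $\delta>0$ with $\delta<\epsilon/3$ so that
$$d_{GH}\!\left((K_\delta', d_\infty),\,(\Sigma, d_\infty)\right)\le\delta<\epsilon/3.$$
Next, by Lemma \ref{GH2}, after further shrinking $\delta$ if necessary, one obtains a threshold $T_2$ such that for every $t\ge T_2$,
$$d_{GH}\!\left((K_\delta, d_{\omega(t)}),\,(X, d_{\omega(t)})\right)<\epsilon/3.$$
Finally, Lemma \ref{GH4} yields a threshold $T_\delta$ such that, for every $t\ge T_\delta$,
$$d_{GH}\!\left((K_\delta, d_{\omega(t)}),\,(K_\delta', d_\infty)\right)\le\delta<\epsilon/3.$$
Taking $t\ge\max\{T_2,T_\delta\}$ and applying the triangle inequality for $d_{GH}$ gives $d_{GH}((X,d_{\omega(t)}),(\Sigma,d_\infty))<\epsilon$, which proves the theorem.

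The conceptual heart of the argument is really the three auxiliary estimates rather than this concatenation. Specifically, the compactness of $(X_\infty,d_\infty)$ and the H\"older-type bound $\mathrm{diam}(\Delta_r^*,\chi_\infty)\le Cr^{1-\beta}$ from Proposition \ref{diam1.1'} ensure that the ``bad'' set $X\setminus K_\delta$ is $d_\infty$-small on the base; the Bishop--Gromov volume comparison, applied via the uniform Ricci lower bound \eqref{Ric bound} together with the collapsing volume estimate \eqref{vol}, turns this into the metric smallness asserted in Lemmas \ref{diam2} and \ref{GH2}; and the $C^0$-control of $\omega(t)$ by $f^*\chi_\infty$ on compact subsets of $X_{reg}$, guaranteed by Theorem \ref{Cinfty}, together with the fiber collapse of part~(2) of Theorem \ref{main}, makes the lift/projection maps via $f$ asymptotic isometries on $K_\delta$. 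I anticipate no additional obstacle at this last step: once these three pieces are lined up, the Gromov--Hausdorff convergence follows by the $\delta$/$3$ triangle-inequality argument above.
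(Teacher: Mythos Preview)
Your proposal is correct and follows essentially the same approach as the paper's own proof: both combine Lemmas \ref{GH1}, \ref{GH2}, and \ref{GH4} via the triangle inequality for the Gromov--Hausdorff distance, using $(K_\delta,d_{\omega(t)})$ and $(K_\delta',d_\infty)$ as intermediaries. The only cosmetic difference is that the paper splits into four $\epsilon/4$-pieces rather than three $\epsilon/3$-pieces, which is immaterial.
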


\begin{proof}
For any small $\epsilon>0$, we fix a $\delta>0$ satisfying
\begin{itemize}
\item[(1)] $\delta<\frac{\epsilon}{4}$;
\item[(2)] $d_{GH}((K_{\delta}',d_{\infty}),(\Sigma,d_{\infty}))<\frac{\epsilon}{4}$;
\item[(3)] $d_{GH}((K_{\delta},d_{\omega(t)}),(X,d_{\omega(t)}))<\frac{\epsilon}{4}$ for any $t\geq T_{\epsilon}$;
\item[(4)] $d_{GH}((K_{\delta},d_{\omega(t)}),(K_{\delta}',d_{\infty}))<\frac{\epsilon}{4}$ for any $t\geq T_{\epsilon}$.
\end{itemize}
Note that (2), (3) and (4) are guaranteed by Lemma \ref{GH1}, Lemma \ref{GH2} and Lemma \ref{GH4} respectively. In fact the constant $T_{\epsilon}$ may also depend on $\delta$. However $\delta$ is a fixed constant determined by $\epsilon$. Therefore $T_{\epsilon}$ only depends on $\epsilon$ and Theorem \ref{GH5} is proved.
\end{proof}

We end this paper by a remark on the K\"{a}hler-Ricci flow.
\begin{rem}\label{r3.1}
Consider the K\"{a}hler-Ricci flow $\bar{\omega}(t)$ on $X$ starting from $\omega_{0}$,
\begin{equation}
\left\{
\begin{aligned}
\partial_{t}\bar{\omega}(t)&=-Ric(\bar{\omega}(t))-\bar{\omega}(t)\\
\bar{\omega}(0)&=\omega_{0}\nonumber,
\end{aligned}
\right.
\end{equation}
which exists for all $t\geq0$ \cite{C,TZo,Ts} and converges to $f^{*}\chi_{\infty}$ in $C_{loc}^\infty(X_{reg},\omega_0)$-topology \cite{FZ} (see also \cite{To15}). If $f:X\rightarrow\Sigma$ has only singular fibers of type $mI_{0}$, i.e., every singular fiber $X_{s_i}$ is a smooth elliptic curve of some non-trivial multiplicity $m_i$ (see, e.g., \cite{M} for more discussions on all possible singular fibers), it is shown in \cite{ToZy} that $|Rm(\bar{\omega}(t))|_{\bar{\omega}(t)}$ is uniformly bounded on $X\times[0,\infty)$. In particular, $Ric(\bar{\omega}(t))$ is uniformly bounded from below on $X\times[0,\infty)$. Thus in this case we can apply the same arguments to show that, as $t\rightarrow\infty$, $(X,\bar{\omega}(t))\rightarrow(\Sigma,d_{\infty})$ in Gromov-Hausdorff topology\footnote{V. Tosatti points out to us that if the only singular fibers are of type $mI_0$, then $X$ has a \emph{global} finite covering space which is an elliptic bundle and hence one can also use the similar arguments in \cite{TWY15} to conclude this result.}. Note that if $X$ has no singular fiber, the Gromov-Hausdorff convergence is known, see \cite{FZ,TWY}.
\end{rem}

\section*{Acknowledgements}
Y. Zhang is grateful to Prof. Huai-Dong Cao for constant encouragement and support. He is also grateful to Prof. Valentino Tosatti and Dr. Zhiming Lin for very useful discussions, in particular to Prof. Tosatti for answering his questions on papers \cite{He,To10}. Part of this work was carried out while Y. Zhang was visiting Capital Normal University in Beijing, which he would like to thank for providing warm hospitality and nice working environment.
\par Both authors thank Yanir Rubinstein for pointing out that the continuity method \eqref{lt} was also introduced in his paper \cite{Ru}, Valentino Tosatti for giving a comment on Remark \ref{r3.1} and the referee for useful comments.

\end{document}